\newtheorem{theo}{Theorem}
\newtheorem{lem}[theo]{Lemma}
\newtheorem{prop}[theo]{Proposition}
\newtheorem{cor}[theo]{Corollary}
\newtheorem{rem}{Remark}
\begin{document}

\title{Subdifferential of the supremum function\thanks{Research supported by CONICYT
(Fondecyt 1190012 and 1190110), Proyecto/Grant PIA AFB-170001, MICIU of Spain
and Universidad de Alicante (Grant Beatriz Galindo BEAGAL 18/00205), and
Research Project PGC2018-097960-B-C21 from MICINN, Spain. The
research of the third author is also supported by the Australian ARC
- Discovery Projects DP 180100602.}}

\subtitle{Moving back and forth between continuous and non-continuous settings}


\author{R. Correa         \and
     A. Hantoute \and   M. A. L\'opez 
}


\institute{R. Correa \at
             Universidad de O'Higgins, Chile, and DIM-CMM of
Universidad de Chile \\
              \email{rcorrea@dim.uchile.cl}           
           \and
           A. Hantoute \at
              Center for Mathematical Modeling (CMM), Universidad de
Chile, and Universidad de Alicante, Spain\\
              \email{ahantoute@dim.uchile.cl (corresponding author)}
               \and
           M. A. L\'opez \at
              Universidad de Alicante, Spain, and CIAO, Federation University, Ballarat, Australia\\
              \email{marco.antonio@ua.es}
}

\date{Received: date / Accepted: date}

\titlerunning{}\maketitle

\begin{abstract}
In this paper we develop general formulas for the subdifferential of the pointwise supremum of convex functions, which cover and unify both\ the compact
continuous and the non-compact non-continuous settings. From the non-continuous to the continuous setting, we proceed by a compactification-based approach which leads us to problems having compact index sets and upper semi-continuously indexed mappings, giving rise to new characterizations of the subdifferential of the supremum by means of upper semicontinuous regularized functions and an enlarged compact index set. In the opposite sense, we rewrite the subdifferential of these new regularized functions by using the original data, also leading us to new results on the subdifferential of the supremum. We give two applications in the last section, the first one concerning the nonconvex Fenchel duality, and the second one establishing Fritz-John and KKT conditions in convex semi-infinite programming.

\keywords{Supremum of convex functions  \and subdifferentials  \and
Stone-\v{C}ech compactification  \and convex semi-infinite programming \and
Fritz-John and KKT optimality conditions}
 \subclass{46N10 \and 52A41 \and 90C25}

\end{abstract}

\section{Introduction}

In this paper we deal with the characterization of the
subdifferential of the pointwise supremum $f:=\sup_{t\in T}f_{t}$ of a
family of convex functions $f_{t}:X\rightarrow\mathbb{R}\cup\{\pm\infty
\}$, $t\in T$, with $T$ being an arbitrary nonempty set,
defined on a separated locally convex space $X.$ We obtain new
characterizations which allow us to unify both the \emph{compact continuous}  and the \emph{non-compact non-continuous} setting (\cite{CHL19}, \cite{CHL19b}, \cite{Ps65}, \cite{Va69},
etc.). The first setting relies on the following standard conditions in the literature of
convex analysis and non-differentiable semi-infinite programming:
\[
T \text{ is compact and the mappings } f_{(\cdot)}(z),\text{ }z\in X, \text{ are
upper semi-continuous.}
\]
In the other framework, called the \emph{non-compact non-continuous} setting, we do not assume the above conditions. In other words (see, i.e., \cite{HL08}, \cite{HLZ08}, \cite{LiNg11}, \cite{Io12}, \cite{So01}, \cite{Va69}, \cite{Vo94}, etc.):
\begin{gather*}
T \text{ is an arbitrary set, possibly infinite and without any prescribed
topology}, \\
\text{and no requirement is imposed on the mappings }f_{(\cdot)}(z).
\end{gather*}

Going from the non-continuous to the continuous setting,
we follow an approach based on the Stone-\v{C}ech compactification of the
index set $T$. At the same time, we build an appropriate enlargement of the
original family $f_{t},\ t\in T,$ which ensures the
fulfillment of the upper semi-continuity property required in the compact
setting. Since the new setting is naturally compact, by applying the results
in \cite{CHL19,CHL19b}, we
obtain new characterizations given in terms of the exact
subdifferential at the reference point of the new functions and the extended
active set. In this way, we succeed in unifying both settings. In
\cite{CHL19c}, we gave the first steps in this direction, using compactification arguments, but in the current paper we go further into the subject with some enhanced formulas.

To move in the other direction, we rewrite the subdifferential of
these new regularizing functions in terms of the original data,
and this also leads us to new results on the subdifferential of the
supremum. In this last case, the characterizations are given upon limit
processes on the $\varepsilon$-subdifferentials at the reference
point of the almost-active original functions. These limit processes also involve approximations by
finite-dimensional sections of the domain of the supremum function. 

The main results of this paper are applied to derive formulas for the subdifferential of the conjugate function (\cite{CH10}, \cite{CH12}, \cite{CH13}). Our approach permits simple proofs of these results, with the aim of relating the solution set of a nonconvex optimization problem and its convexified relaxation. Additionally, our results give rise to new Fritz-John and KKT conditions in convex semi-infinite programming.

The paper is organized as follows. After a short section introducing the
notation, in section \ref{preliminary} we present some
preliminary results in the continuous setting. In section \ref{seccom} we
apply our compactification approach to obtain, in Theorem
\ref{hamdkathir}, a first characterization of the subdifferential of the supremum.
Such a theorem constitutes an improved
version of the main result in \cite{CHL19c}, as the requirement of
equipping $T$ with a completely regular topology is eliminated. 
Theorem \ref{hamdkathir} is enhanced in Section \ref{sec5}, allowing for a more natural interpretation of the regularized functions.
The main result in section \ref{Secsubd} is Theorem
\ref{hamdkathirbisb}, involving only the $\varepsilon$-subdifferentials of the
original data functions. This theorem, whose proof is based on Lemmas
\ref{lemteco} and \ref{lemtec}, is crucial in the proposed approach to
move from the continuous to the non-continuous setting. Finally, in section \ref{Secap}, we give two applications.
The first one addresses the extension of the classical Fenchel duality
to nonconvex functions, and the second one establishes Fritz-John
and KKT optimality conditions for convex semi-infinite optimization.

\section{Notation}

Let\ $X$ be a (real) separated locally convex space, with topological dual $X^{\ast}$ endowed with the $w^{\ast}%
$-topology.$\ $By $\mathcal{N}_{X}$ ($\mathcal{N}_{X^{\ast}}$) we denote the
family of closed, convex, and balanced neighborhoods of the origin in $X$
($X^{\ast}$), also called $\theta$-neighborhoods. The spaces $X$ and $X^{\ast}$
are paired in duality by the bilinear form $(x^{\ast},x)\in X^{\ast}\times
X\mapsto\langle x^{\ast},x\rangle:=\left\langle x,x^{\ast}\right\rangle
:=x^{\ast}(x)$. The zero vectors in $X$ and $X^{\ast}$ are both denoted by
$\theta.$ We use the notation $\overline{\mathbb{R}}:=\mathbb{R}\cup
\{-\infty,+\infty\}$ and $\mathbb{R}_{\infty}:=\mathbb{R}\cup\{+\infty\}$, and
adopt the convention\emph{ }$\left(  +\infty\right)  +(-\infty)=\left(
-\infty\right)  +(+\infty)=+\infty.$

Given two nonempty sets $A$ and $B$ in $X$ (or in $X^{\ast}$), we define the
\emph{algebraic} (or \emph{Minkowski}) \emph{sum} by
\begin{equation}
A+B:=\{a+b:\ a\in A,\text{ }b\in B\},\quad A+\emptyset=\emptyset+A=\emptyset.
\label{mincov}%
\end{equation}

By $\operatorname*{co}(A),$ $\operatorname*{cone}(A),$ and
$\operatorname*{aff}(A)$, we denote the \emph{convex, }the\emph{ conical
convex, }and\emph{\ }the \emph{affine hulls} of the set $A$, respectively.
Moreover, $\operatorname*{int}(A)$ is the \emph{interior} of $A$, and
$\operatorname*{cl}A$ and $\overline{A}$ are indistinctly used for denoting
the \emph{closure }of $A$. We use $\operatorname*{ri}(A)$ to denote the
(topological) \emph{relative interior} of $A$ (i.e., the interior of $A$ in
the topology relative to $\operatorname*{aff}(A)$ if $\operatorname*{aff}(A)$ is
closed, and the empty set otherwise).

Associated with $A\neq\emptyset$ we consider the \emph{polar set} and the \emph{orthogonal subspace}
given respectively by
\[
A^{\circ}:=\left\{  x^{\ast}\in X^{\ast}:\ \langle x^{\ast},x\rangle\leq 1\text{
for all }x\in A\right\},
\]
and
\[
A^{\perp}:=\left\{  x^{\ast}\in X^{\ast}:\ \langle x^{\ast},x\rangle=0\text{
for all }x\in A\right\} .
\]
The following relation holds
\begin{equation}
\bigcap\nolimits_{L\in\mathcal{F}}(A+L^{\perp})\subset\operatorname*{cl}A,
\label{tr}%
\end{equation}
where $\mathcal{F}$ is the family\ of finite-dimensional linear subspaces in
$X.$

If $A\subset X$ is convex and $x\in X,$ we define the \emph{normal cone }to
$A$\ at $x$ as
\[
\mathrm{N}_{A}(x):=\left\{  x^{\ast}\in X^{\ast}:\ \langle x^{\ast}%
,z-x\rangle\leq0\text{ for all }z\in A\right\}  ,
\]
if\ $x\in A,$ and the empty set otherwise.

The basic concepts in this paper are traced from \cite{Mo65,Ro70}. Given a function $f:X\longrightarrow\overline{\mathbb{R}}$, its
\emph{(effective)} \emph{domain} and \emph{epigraph} are, respectively,
\[
\operatorname*{dom}f:=\{x\in X:\ f(x)<+\infty\}\text{ and }\operatorname*{epi}%
f:=\{(x,\lambda)\in X\times\mathbb{R}:\ f(x)\leq\lambda\}.
\]
We say that $f$ is \emph{proper} when\ $\operatorname*{dom}f\neq\emptyset$ and
$f(x)>-\infty$ for all $x\in X$. By $\operatorname*{cl}f$ and $\overline{\operatorname*{co}}f$ we respectively denote the \emph{closed} and the \emph{closed convex hulls} of $f$, which are the functions such that $\operatorname*{epi}%
(\operatorname*{cl}f)=\operatorname*{cl}(\operatorname*{epi}f)$ and $\operatorname*{epi}%
(\overline{\operatorname*{co}}f)=\overline{\operatorname*{co}}(\operatorname*{epi}f)$. We say that
$f$ is lower semicontinuous (lsc, for short) at $x$ if $(\operatorname*{cl}%
f)(x)=f(x),$ and lsc if $\operatorname*{cl}f=f.$

Given\ $x\in X$ and $\varepsilon\geq0,$ the $\varepsilon$%
-\emph{subdifferential }of $f$ at $x$ is
\[
\partial_{\varepsilon}f(x)=\{x^{\ast}\in X^{\ast}:\ f(y)\geq f(x)+\langle
x^{\ast},y-x\rangle-\varepsilon\text{ \ for all }y\in X\},
\]
when $x\in\operatorname*{dom}f,$ and $\partial_{\varepsilon}f(x):=\emptyset$
when $f(x)\notin\mathbb{R}.$ The\ elements of $\partial_{\varepsilon}f(x)$ are
called $\varepsilon$-\emph{subgradients} of $f$ at $x.$ The
\emph{subdifferential} of $f$ at $x$ is $\partial f(x):=\partial_{0}f(x),$
and its elements are called \emph{subgradients} of $f$ at $x$. If $f$ and $g$ are convex functions such that one of them is finite and continuous at a point
of the domain of the other one, then Moreau-Rockafellar's theorem says that
\begin{equation}
\partial(f+g)=\partial f+\partial g. \label{MR}%
\end{equation}
Given a function $f:X\to\overline{\mathbb{R}}$, the \emph{(Fenchel) conjugate} of $f$ is the function $f^*:X^*\to\overline{\mathbb{R}}$ defined as 
$$
f^*(x^*):=\sup_{x\in X}\{\langle x^*,x\rangle-f(x)\}.
$$
The \emph{indicator} and the \emph{support} functions of $A\subset X$ are respectively 
defined as
\[
\mathrm{I}_{A}(x):=\left\{
\begin{array}
[c]{ll}%
0 ,& \text{if }x\in A,\\
+\infty , & \text{if }x\in X\setminus A,
\end{array}
\right.
\]
and 
\[
\sigma_{A}:=\mathrm{I}_{A}^*.
\]
Provided that $f^{\ast}$ is proper, by Moreau's theorem we have
\begin{equation}
f^{\ast\ast}=\overline{\operatorname*{co}}f, \label{moreau}%
\end{equation}
where $f^{\ast\ast}:=(f^{\ast})^*$.
For example, if $\left\{  f_{i},\text{ }i\in I\right\}  $ is a nonempty family
of proper lsc convex functions, then
\begin{equation}
(\sup\nolimits_{i\in I}f_{i})^{\ast}=\overline{\operatorname*{co}}(\inf\nolimits_{i\in I}%
f_{i}^{\ast}), \label{moreaub}%
\end{equation}
provided that the supremum function $\sup_{i\in I}f_{i}$ is proper. Thus,
given a nonempty family of\ closed convex sets $A_{i}\subset X,$
\thinspace$i\in I,$ such that $\cap_{i\in I}A_{i}\neq\emptyset,$ we have
$\mathrm{I}_{\cap_{i\in I}A_{i}}(x)=\sup_{i\in I}\mathrm{I}_{A_{i}}(x)$ and,
so, by taking the conjugate in the equalities $\mathrm{I}_{\cap_{i\in I}A_{i}%
}(x)=\sup_{i\in I}\mathrm{I}_{A_{i}}(x)=\sup_{i\in I}\mathrm{\sigma}_{A_{i}%
}^{\ast}(x),$ we obtain
\[
\mathrm{\sigma}_{\cap_{i\in I}A_{i}}=(\mathrm{I}_{\cap_{i\in I}A_{i}})^{\ast
}=(\sup\nolimits_{i\in I}\mathrm{I}_{A_{i}})^{\ast}=\overline{\operatorname*{co}}%
(\inf\nolimits_{i\in I}\mathrm{\sigma}_{A_{i}}).
\]

\section{Preliminary results in the continuous framework\label{preliminary}}

In Section \ref{seccom} we develop a compactification process
addressed to give\ new characterizations of the subdifferential of
the pointwise supremum, with the aim of unifying both the compact and
non-compact settings. In this section we gather some preliminary results in
the continuous setting.

Given the family of convex functions $f_{t}:X\rightarrow
\overline{\mathbb{R}},$ \ $t\in T,$ and the supremum function $f:=\sup_{t\in
T}f_{t}$, we start from the following characterization of $\partial
f(x)$ in the continuous setting,\ given in \cite[Proposition 2]{CHL19},
where the following notation is used:
\begin{equation}
\mathcal{F}(x):=\left\{  L\subset X:\text{ }L\text{ is a finite-dimensional
linear subspace containing }x\right\}  , \label{finitesub}%
\end{equation}
and
\[
T_{\varepsilon}(x):=\left\{  t\in T: f_{t}(x)\geq f(x)-\varepsilon\right\}
,
\]
for $\varepsilon\geq0;$ we set $T(x):=T_{0}(x).$

\begin{prop}
\label{thmcompact0}\emph{\cite[Proposition 2]{CHL19}} Fix\ $x\in X$ and
$\varepsilon>0$ such that $T_{\varepsilon}(x)$ is compact Hausdorff and,\ for
each net $(t_{i})_{i}\subset T_{\varepsilon_{0}}(x)$ converging to $t,$
\begin{equation}
\limsup\nolimits_{i}f_{t_{i}}(z)\leq f_{t}(z)\text{ \ \ for all }%
z\in\operatorname*{dom}f; \label{uscm}%
\end{equation}
that is, the functions $f_{(\cdot)}(z)$ are upper semi-continuos (usc, in brief) relatively to $T_{\varepsilon
_{0}}(x).$ Then we have%
\begin{equation}
\partial f(x)=\bigcap\nolimits_{L\in\mathcal{F}(x)}\operatorname*{co}\left\{
\bigcup\nolimits_{t\in T(x)}\partial(f_{t}+\mathrm{I}_{L\cap
\operatorname*{dom}f})(x)\right\}  . \label{form5}%
\end{equation}

\end{prop}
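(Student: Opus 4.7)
The plan is to establish the two inclusions of (\ref{form5}) separately, using the family $\mathcal{F}(x)$ of finite-dimensional subspaces to reduce the analysis on $X$ to finite-dimensional sections, where a classical Valadier-type theorem for the subdifferential of a compact-continuous supremum can be invoked on each slice.

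For the easy inclusion (right-hand side contained in $\partial f(x)$), I would fix $x^{\ast}$ in the right-hand side and an arbitrary $z\in\operatorname*{dom}f$, and pick $L\in\mathcal{F}(x)$ containing $z$. Then $x^{\ast}$ can be written as a finite convex combination $\sum_{i}\lambda_{i}x_{i}^{\ast}$ with $x_{i}^{\ast}\in\partial(f_{t_{i}}+\mathrm{I}_{L\cap\operatorname*{dom}f})(x)$ and $t_{i}\in T(x)$. Since $f_{t_{i}}(x)=f(x)$, evaluating each subgradient inequality at $y=z\in L\cap\operatorname*{dom}f$ gives $f_{t_{i}}(z)\geq f(x)+\langle x_{i}^{\ast},z-x\rangle$; averaging with the $\lambda_{i}$ and using $f\geq f_{t_{i}}$ yields $f(z)\geq f(x)+\langle x^{\ast},z-x\rangle$, while for $z\notin\operatorname*{dom}f$ the inequality is trivial, so $x^{\ast}\in\partial f(x)$.

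For the non-trivial inclusion $\partial f(x)\subseteq$ RHS, I would fix $x^{\ast}\in\partial f(x)$ and $L\in\mathcal{F}(x)$. Adding the indicator $\mathrm{I}_{L\cap\operatorname*{dom}f}$ only enlarges the subdifferential, so $x^{\ast}\in\partial(f+\mathrm{I}_{L\cap\operatorname*{dom}f})(x)$ is automatic. Since this regularized function coincides with $\sup_{t\in T}(f_{t}+\mathrm{I}_{L\cap\operatorname*{dom}f})$ and has effective domain inside the finite-dimensional subspace $L$, the task reduces to analyzing the subdifferential of a supremum of convex functions on a finite-dimensional space. The active set $T(x)$ is compact as a closed subset of the compact Hausdorff set $T_{\varepsilon}(x)$, and the upper semi-continuity of $f_{(\cdot)}(z)$ on $T_{\varepsilon_{0}}(x)$ transfers verbatim to the indicator-augmented family. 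A Valadier-type theorem on finite dimensions will then yield
\[
\partial(f+\mathrm{I}_{L\cap\operatorname*{dom}f})(x)=\operatorname*{co}\bigcup\nolimits_{t\in T(x)}\partial(f_{t}+\mathrm{I}_{L\cap\operatorname*{dom}f})(x),
\]
which closes the inclusion.

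The main obstacle will be securing this finite-dimensional formula in the precise form required: the convex hull on the right is plain (not closed), and the base point $x$ need not lie in the relative interior of $L\cap\operatorname*{dom}f$. I would handle both issues by first invoking the standard closed-convex-hull version of the Valadier theorem on the affine hull of $L\cap\operatorname*{dom}f$ and then using compactness of $T(x)$ together with the usc transfer from $T_{\varepsilon_{0}}(x)$ to collapse the closure into a plain convex hull in finite dimensions via a Carath\'eodory-type argument; if $x$ sits on the relative boundary, an auxiliary approximation from a relative-interior point, or an additional limit through (\ref{tr})-type enlargements of $L$, would be brought in to transport the identity to $x$.
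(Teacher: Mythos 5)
This proposition is imported verbatim from \cite[Proposition 2]{CHL19}; the paper under review gives no proof of it, so there is nothing internal to compare your argument against, and I can only assess it on its own terms. Your easy inclusion is correct and complete: choosing $L$ through $x$ and the test point $z$, writing $x^{\ast}$ as a convex combination of subgradients of the augmented active functions, and averaging the subgradient inequalities does give $x^{\ast}\in\partial f(x)$. The reduction of the hard inclusion to the finite-dimensional section $L\cap\operatorname*{dom}f$ is also the right structural move, and the transfer of compactness of $T(x)$ and of the usc hypothesis to the indicator-augmented family is fine.

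The gap is in the step you yourself flag as the obstacle: the ``Valadier-type theorem on finite dimensions'' you invoke, in the exact form
$\partial(f+\mathrm{I}_{L\cap\operatorname*{dom}f})(x)=\operatorname*{co}\bigcup_{t\in T(x)}\partial(f_t+\mathrm{I}_{L\cap\operatorname*{dom}f})(x)$
with a \emph{plain} convex hull and \emph{no} relative-interior assumption on $x$, is not a standard result you can cite --- it is precisely the finite-dimensional case of the proposition being proven (\cite[Theorem 3]{CHL19}), and its proof is the substantive content of that reference. Neither of your two repair strategies closes this. First, the classical closed-convex-hull Valadier formula needs $f$ (restricted to the affine hull) continuous at, or $x$ in the relative interior of, the relevant domain; here $x$ may sit on the relative boundary of $L\cap\operatorname*{dom}f$, which is exactly the situation the proposition is designed to handle, so there is no ``standard version'' to start from. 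Second, a Carath\'eodory argument collapses $\overline{\operatorname*{co}}$ to $\operatorname*{co}$ only when the underlying union is compact; but each $\partial(f_t+\mathrm{I}_{L\cap\operatorname*{dom}f})(x)$ is unbounded whenever $x$ is a relative boundary point of $L\cap\operatorname*{dom}f$ (it contains normal-cone directions), so $\bigcup_{t\in T(x)}\partial(f_t+\mathrm{I}_{L\cap\operatorname*{dom}f})(x)$ is not compact and its convex hull is not automatically closed. The fact that the plain convex hull nevertheless captures all of $\partial f(x)$ requires a genuine argument (in \cite{CHL19} this is done through a careful separation/limiting analysis on the finite-dimensional sections, handling recession directions of the subdifferentials explicitly), and the sentence ``an auxiliary approximation from a relative-interior point \dots would be brought in'' is too vague to count as supplying it. As written, the hard inclusion is therefore circular: it assumes the finite-dimensional instance of the statement to prove the general one.
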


It is worth recalling that the intersection over the $L$'s in (\ref{form5}) is
removed in finite dimensions (\cite[Theorem 3]{CHL19}) and, more generally, if
$\operatorname*{ri}(\operatorname*{dom}f)\neq\emptyset$ and $f_{\mid
\operatorname*{aff}(\operatorname*{dom}f)}$ is continuous on
$\operatorname*{ri}(\operatorname*{dom}f),$ then we have (see \cite[Corollary
3.9]{CHL19b})
\[
\partial f(x)=\overline{\operatorname*{co}}\left\{  \bigcup\nolimits_{t\in
T(x)}\partial(f_{t}+\mathrm{I}_{\operatorname*{dom}f})(x)\right\}  .
\]
Consequently, if $f$ is continuous somewhere in its domain, then
(\cite[Theorem 3.12]{CHL19b})
\[
\partial f(x)=\overline{\operatorname*{co}}\left\{  \bigcup\nolimits_{t\in
T(x)}\partial f_{t}(x)\right\}  +\mathrm{N}_{\operatorname*{dom}f}(x),
\]
and the closure is removed in finite dimensions. In particular, when $f$ is
continuous at the reference point $x,$ the normal cone above collapses to $\theta$ and we
recover Valadier's formula in \cite{Va69}.

On the other hand, in the general setting, when either $T$ is not compact
and/or some of the mappings $t\rightarrow f_{t}(z),$ $z\in\operatorname*{dom}%
f,$ fail to be usc, the active index set $T(x)$ as well as the subdifferential
sets $\partial f_{t}(x)$ may be empty. To overcome\ this situation, the following result given in
\cite[Theorem 4]{HLZ08} (see, also, \cite{HL08} for finite dimensions) appeals
to\ the $\varepsilon$-active set $T_{\varepsilon}(x)$ and the $\varepsilon$-subdifferentials.

\begin{prop}
\label{p1}If
\begin{equation}\label{cll}
\operatorname*{cl}f=\sup_{t\in T}(\operatorname*{cl}f_{t}), %
\end{equation}
then for every\ $x\in X$%
\begin{equation}
\partial f(x)=\bigcap\nolimits_{\varepsilon>0,L\in\mathcal{F}(x)}%
\overline{\operatorname*{co}}\left\{  \bigcup\nolimits_{t\in T_{\varepsilon
}(x)}\partial_{\varepsilon}f_{t}(x)+\mathrm{N}_{L\cap\operatorname*{dom}%
f}(x)\right\}  . \label{fe1}%
\end{equation}

\end{prop}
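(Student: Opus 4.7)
I establish the two inclusions separately. The easy direction $(\supset)$ is a calculation with defining inequalities; the hard direction $(\subset)$ passes through the Moreau identity (\ref{moreaub}) and the finite-dimensional approximation (\ref{tr}).

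For $(\supset)$, fix $x^*$ in the right-hand side of (\ref{fe1}) and $y\in\operatorname*{dom}f$, and choose $L\in\mathcal{F}(x)$ with $y\in L$. For each $\varepsilon>0$, approximate $x^*$ in the $w^*$-topology by a convex combination $\sum_i\lambda_i(u_i^*+v_i^*)$ with $u_i^*\in\partial_\varepsilon f_{t_i}(x)$, $t_i\in T_\varepsilon(x)$, and $v_i^*\in\mathrm{N}_{L\cap\operatorname*{dom}f}(x)$. Using $f_{t_i}\le f$, $f_{t_i}(x)\ge f(x)-\varepsilon$, and $\langle v_i^*,y-x\rangle\le 0$, I obtain $\sum_i\lambda_i\langle u_i^*+v_i^*,y-x\rangle\le f(y)-f(x)+2\varepsilon$; passing to the $w^*$-limit and letting $\varepsilon\downarrow 0$ yields the subgradient inequality. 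The edge cases $y\notin\operatorname*{dom}f$ and $f(x)\notin\mathbb{R}$ are trivial.

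For $(\subset)$, fix $x^*\in\partial f(x)$, so $f(x)\in\mathbb{R}$, $f$ is lsc at $x$, and $f(x)=(\operatorname*{cl}f)(x)$. A short reduction using (\ref{cll}) (replacing each $f_t$ by $\operatorname*{cl}f_t$) allows me to assume every $f_t$ is proper lsc convex; (\ref{moreaub}) then gives $\operatorname*{epi}f^*=\overline{\operatorname*{co}}(\bigcup_{t\in T}\operatorname*{epi}f_t^*)$ ($w^*$-closure in $X^*\times\mathbb{R}$), and Fenchel-Young identifies $x^*\in\partial f(x)$ with $(x^*,\langle x^*,x\rangle-f(x))\in\operatorname*{epi}f^*$. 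Fix $\varepsilon>0$, $L\in\mathcal{F}(x)$, and an auxiliary $L'\in\mathcal{F}(x)$ with $L\subseteq L'$; for any $\eta>0$, approximating the above point in a basic $w^*$-neighborhood built from a basis of $L'$ produces $\lambda_i\ge 0$ summing to $1$ and $(y_i^*,r_i)\in\operatorname*{epi}f_{t_i}^*$ with $\sum_i\lambda_iy_i^*-x^*$ small on $L'$ and $\sum_i\lambda_ir_i\le\langle x^*,x\rangle-f(x)+\eta$. Setting $\delta_i:=r_i-\langle y_i^*,x\rangle+f_{t_i}(x)\ge 0$ (Fenchel-Young), one has $y_i^*\in\partial_{\delta_i}f_{t_i}(x)$ when $f_{t_i}(x)\in\mathbb{R}$, and the identity
\begin{equation*}
\sum_i\lambda_i\delta_i\ =\ \sum_i\lambda_ir_i\ -\ \Bigl\langle\sum_i\lambda_iy_i^*,x\Bigr\rangle\ +\ \sum_i\lambda_if_{t_i}(x),
\end{equation*}
combined with $\sum_i\lambda_if_{t_i}(x)\le f(x)$, yields both $\sum_i\lambda_i\delta_i=O(\eta)$ and $f(x)-\sum_i\lambda_if_{t_i}(x)=O(\eta)$.

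Markov-type pigeonholings then bound the $\lambda$-mass of indices with $\delta_i>\varepsilon$ or $t_i\notin T_\varepsilon(x)$ by $O(\eta/\varepsilon)$. Splitting the convex combination accordingly, and using $L^\perp\subseteq\mathrm{N}_{L\cap\operatorname*{dom}f}(x)$, one writes $x^*$, modulo an element of $(L')^\perp$, as a convex combination of elements of $\bigcup_{t\in T_\varepsilon(x)}\partial_\varepsilon f_t(x)$ plus an element of $\mathrm{N}_{L\cap\operatorname*{dom}f}(x)$, valid for every such $L'$. Intersecting over $L'$ and invoking (\ref{tr}) places $x^*$ in $\overline{\operatorname*{co}}\{\bigcup_{t\in T_\varepsilon(x)}\partial_\varepsilon f_t(x)+\mathrm{N}_{L\cap\operatorname*{dom}f}(x)\}$; since $\varepsilon$ and $L$ were arbitrary, $(\subset)$ follows. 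The delicate point is precisely the absorption of the "bad" part of the convex combination and the residual $w^*$-error into the normal cone modulo $(L')^\perp$; this rests on the Fenchel-Young bound $\sum_i\lambda_i\delta_i=O(\eta)$ and, crucially, on the reduction to lsc $f_t$ enabled by (\ref{cll}), without which (\ref{moreaub}) may fail.
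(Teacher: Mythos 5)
First, on context: the paper does not actually prove Proposition \ref{p1} --- it imports it verbatim from \cite[Theorem 4]{HLZ08} --- so there is no internal proof to compare with. Your outline follows the standard conjugacy route of that reference (and of the paper's own Lemma \ref{lemteco}): the inclusion ``$\supset$'' is correct as you describe it, and the skeleton of ``$\subset$'' (reduce to lsc data via (\ref{cll}), use $\operatorname{epi}f^{\ast}=\overline{\operatorname*{co}}\bigl(\bigcup_{t}\operatorname{epi}f_{t}^{\ast}\bigr)$, set $\delta_{i}:=r_{i}-\langle y_{i}^{\ast},x\rangle+f_{t_{i}}(x)\geq 0$, and run Markov estimates on $\sum_{i}\lambda_{i}\delta_{i}=O(\eta)$ and $f(x)-\sum_{i}\lambda_{i}f_{t_{i}}(x)=O(\eta)$) is the right one.

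However, the step you yourself call ``the delicate point'' is a genuine gap, not a verification. The Markov bound controls only the $\lambda$-mass $\Lambda_{b}=O(\eta/\varepsilon)$ of the bad indices; it gives no control on the \emph{vector} $\sum_{i\in I_{b}}\lambda_{i}y_{i}^{\ast}$, nor on the renormalization error $(1-\Lambda_{g}^{-1})\sum_{i\in I_{g}}\lambda_{i}y_{i}^{\ast}$, because the $y_{i}^{\ast}$ may blow up as fast as the $\lambda_{i}$ shrink. So you cannot conclude that $x^{\ast}$ equals, modulo $(L')^{\perp}$, a convex combination of $\varepsilon$-subgradients plus a normal vector: what the approximation actually yields is that $x^{\ast}$ is within $\eta$ of $\Lambda_{g}\tilde{u}^{\ast}+\sum_{i\in I_{b}}\lambda_{i}y_{i}^{\ast}$ on $L'$, with an uncontrolled second summand. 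Closing the argument requires an a priori bound on the $y_{i}^{\ast}$ restricted to $\operatorname*{aff}(\operatorname*{dom}f\cap L')$, obtained from the relative continuity of $f+\mathrm{I}_{L'}$ on $\operatorname*{ri}(\operatorname*{dom}f\cap L')$ (available precisely because $L'$ is finite-dimensional) together with Alaoglu; one then extracts a cluster point $v^{\ast}$ of the bad part and shows $\langle v^{\ast},y-x\rangle\leq\lim\bigl(\Lambda_{b}(f(y)-f(x))+O(\eta)\bigr)=0$ on $\operatorname*{dom}f\cap L'$, i.e.\ $v^{\ast}\in\mathrm{N}_{L'\cap\operatorname*{dom}f}(x)\subset\mathrm{N}_{L\cap\operatorname*{dom}f}(x)$. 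This boundedness-plus-compactness mechanism is exactly what the paper's Lemma \ref{lemteco} supplies (the step placing $(z_{i}^{\ast})_{i}$ in $(U\cap E)^{\circ}$), and it is absent from your write-up; the Fenchel--Young bound $\sum_{i}\lambda_{i}\delta_{i}=O(\eta)$ alone cannot do the job. A secondary gap: the reduction ``assume every $f_{t}$ is proper lsc'' is not immediate from (\ref{cll}), since $\operatorname*{cl}f_{t}$ may fail to be proper; one must first truncate (e.g.\ $g_{t}:=\max\{\operatorname*{cl}f_{t},-1\}$ near $x$, as in Step 2 of the paper's proof of Theorem \ref{hamdkathirbisb}) before (\ref{moreaub}) can legitimately be invoked.
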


Also here, the intersection over the $L$'s is dropped out if
$\operatorname*{ri}(\operatorname*{dom}f)\neq\emptyset$ (\cite[Corollary
8]{HLZ08}). Moreover, if $f$ is continuous somewhere, so that (\ref{cll}) holds
automatically (\cite[Corollary 9]{HLZ08}), then the last formula reduces to
\[
\partial f(x)=\mathrm{N}_{\operatorname*{dom}f}(x)+ \bigcap\nolimits_{\varepsilon>0}\overline
{\operatorname*{co}}\left\{  \bigcup\nolimits_{t\in T_{\varepsilon}%
(x)}\partial_{\varepsilon}f_{t}(x)\right\}  .
\]
Hence, provided that\ $f$ is continuous at $x,$ we obtain the formula in \cite{Vo94} (where the underlying space $X$ is additionally assumed to be normed).

Condition (\ref{cll}) guarantees  the possibility of characterizing $\partial f(x)$ by means
of the $f_{t}$'s, and not via the augmented functions $f_{t}+\mathrm{I}_{L\cap
\operatorname*{dom}f}$ as in Proposition \ref{thmcompact0}. Thus, to complete the
analysis, we give next a consequence of (\ref{fe1}), which avoids to appeal to condition
(\ref{cll}).

\begin{prop}
\label{p2}For every\ $x\in X,$%
\begin{equation}
\partial f(x)=\bigcap\nolimits_{\varepsilon>0,L\in\mathcal{F}(x)}%
\overline{\operatorname*{co}}\left\{  \bigcup\nolimits_{t\in T_{\varepsilon
}(x)}\partial_{\varepsilon}(f_{t}+\mathrm{I}_{L\cap\operatorname*{dom}%
f})(x)\right\}  . \label{fe2}%
\end{equation}

\end{prop}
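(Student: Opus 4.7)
The plan is to derive Proposition~\ref{p2} from Proposition~\ref{p1} by applying formula (\ref{fe1}) to the augmented family $g_t^L := f_t+\mathrm{I}_{L\cap\operatorname*{dom}f}$, $t\in T$, for each fixed $L\in\mathcal{F}(x)$; its supremum is $g^L := f+\mathrm{I}_{L\cap\operatorname*{dom}f}$. For the inclusion $\supset$, I would argue directly from the subgradient inequality: given $x^{\ast}$ in the right-hand side of (\ref{fe2}) and any $y\in\operatorname*{dom}f$, pick $L\in\mathcal{F}(x)$ containing $y$, so that $y\in L\cap\operatorname*{dom}f$. Then for each $\varepsilon>0$ and each $u^{\ast}\in\partial_{\varepsilon}(f_{t}+\mathrm{I}_{L\cap\operatorname*{dom}f})(x)$ with $t\in T_{\varepsilon}(x)$, the subgradient inequality evaluated at $y$, combined with $f_{t}(x)\geq f(x)-\varepsilon$ and $f(y)\geq f_{t}(y)$, yields $f(y)\geq f(x)+\langle u^{\ast},y-x\rangle-2\varepsilon$. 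This estimate survives convex combinations and $w^{\ast}$-closures, hence propagates to $x^{\ast}$; letting $\varepsilon\downarrow 0$ yields $x^{\ast}\in\partial f(x)$.

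For the inclusion $\subset$, I would assume $x\in\operatorname*{dom}f$ (otherwise both sides are empty) and fix $L\in\mathcal{F}(x)$. Then $x\in\operatorname*{dom}g^L=L\cap\operatorname*{dom}f$ and the active set for $g^L$ at $x$ coincides with $T_{\varepsilon}(x)$. Applying (\ref{fe1}) to $\{g_t^L\}$ with the inner subspace specialized to $L'=L$, the normal-cone term $\mathrm{N}_{L\cap\operatorname*{dom}f}(x)$ is absorbed by $\partial_{\varepsilon}g_t^L(x)$ (which already carries $\mathrm{I}_{L\cap\operatorname*{dom}f}$), simplifying (\ref{fe1}) to
\[
\partial g^L(x)\subset\bigcap_{\varepsilon>0}\overline{\operatorname*{co}}\bigcup_{t\in T_{\varepsilon}(x)}\partial_{\varepsilon}(f_{t}+\mathrm{I}_{L\cap\operatorname*{dom}f})(x).
\]
Combining with $\partial f(x)\subset\partial g^L(x)$ (an elementary consequence of the definition of the subdifferential) and intersecting over $L\in\mathcal{F}(x)$ delivers the required inclusion.

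The hard part will be justifying that Proposition~\ref{p1} applies to $\{g_t^L\}$, i.e., verifying its hypothesis $\operatorname*{cl}g^L=\sup_{t}\operatorname*{cl}g_t^L$ for this modified family. The finite-dimensionality of $L$ is essential here: since both sides are supported on $L$, the verification reduces to finite-dimensional convex analysis, where both functions agree on $\operatorname*{ri}(L\cap\operatorname*{dom}f)$ (both equal the pointwise supremum $\sup_t f_t$ there), and relative-boundary points are then handled via line-segment limits toward relative-interior points of $L\cap\operatorname*{dom}f$.
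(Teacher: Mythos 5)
Your proposal is correct and follows essentially the same route as the paper: fix $L\in\mathcal{F}(x)$, apply Proposition~\ref{p1} to the augmented family $f_t+\mathrm{I}_{L\cap\operatorname*{dom}f}$ (whose $\varepsilon$-active set at $x$ is again $T_{\varepsilon}(x)$), absorb the normal-cone term, use $\partial f(x)\subset\partial(f+\mathrm{I}_{L\cap\operatorname*{dom}f})(x)$, and intersect over $L$. The only cosmetic difference is in checking the closure hypothesis (\ref{cll}) for the augmented family: the paper simply invokes \cite[Corollary 9(iv)]{HLZ08} via the condition $\operatorname*{ri}(L\cap\operatorname*{dom}f)\neq\emptyset$, whereas you sketch the underlying finite-dimensional line-segment verification directly, which amounts to the same thing.
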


\begin{proof}
Fix $x\in\operatorname*{dom}f$ and $L\in\mathcal{F}(x),$ and denote
\[
g_{t}:=f_{t}+\mathrm{I}_{L\cap\operatorname*{dom}f},\text{ }t\in T;\quad
g:=\sup\nolimits_{t\in T}g_{t}.
\]
We have\ $\operatorname*{dom}g_{t}=L\cap\operatorname*{dom}f$ and%
\[
\operatorname*{dom}g\cap(\cap_{t\in T}\operatorname*{ri}(\operatorname*{dom}%
g_{t}))=(L\cap\operatorname*{dom}f)\cap\operatorname*{ri}(\operatorname*{dom}%
f\cap L)=\operatorname*{ri}(\operatorname*{dom}f\cap L)\neq\emptyset,
\]
so that, by \cite[Corollary 9(iv)]{HLZ08}, the family $\left\{  g_{t},\text{
}t\in T\right\}  $ satisfies condition (\ref{cll}). At the same time we have,
for all $\varepsilon\geq0,$
\[
\left\{  t\in T: g_{t}(x)\geq g(x)-\varepsilon\right\}  =T_{\varepsilon
}(x).
\]
Then, since that 
$\partial f(x)\subset\partial(f+\mathrm{I}_{L\cap\operatorname*{dom}%
f})(x)=\partial g(x)$,
by Proposition \ref{p1} we obtain that 
\begin{align*}
\partial f(x)  &  \subset\bigcap\nolimits_{\varepsilon>0}%
\overline{\operatorname*{co}}\left\{  \bigcup\nolimits_{t\in T_{\varepsilon
}(x)}\partial_{\varepsilon}g_{t}(x)+\mathrm{N}_{L\cap\operatorname*{dom}%
g}(x)\right\} \\
&  \subset\bigcap\nolimits_{\varepsilon>0}\overline{\operatorname*{co}%
}\left\{  \bigcup\nolimits_{t\in T_{\varepsilon}(x)}\partial_{\varepsilon
}(f_{t}+\mathrm{I}_{L\cap\operatorname*{dom}f})(x)\right\}  ,
\end{align*}
and the inclusion \textquotedblleft$\subset$\textquotedblright\ in (\ref{fe2})
follows as $L$ was arbitrarily chosen.\ The opposite inclusion\ is
straightforward, and we are done.
\end{proof}

\section{Compactification approach to the subdifferential\label{seccom}}

Our main objective in this section is to give a new characterization for
$\partial f(x),$ which covers both formula (\ref{form5}) in the
compact-continuous setting, using the active set and the exact
subdifferential, and formula (\ref{fe2}) in the non-compact non-continuous
framework, given in terms\ of $\varepsilon$-active indices and $\varepsilon
$-subdifferentials. To this aim, we develop a compactification approach which works by
extending the original index set $T$ to a compact set $\widehat{T}$, and building new
appropriate functions $f_{\gamma}$, $\gamma\in \widehat{T}$, that satisfy property
(\ref{uscm}) of Proposition \ref{thmcompact0}. To make the paper
self-contained, we resume here the main features of the compactification
process, which can be also found in \cite{CHL19c}.

We start by assuming that $T$ is endowed with some topology $\tau,$ for
instance the discrete topology. If
\begin{equation}
\mathcal{C}(T,\left[  0,1\right]  ):=\left\{  \varphi:T\rightarrow\left[
0,1\right]  :\varphi\text{ is }\tau\text{-continuous}\right\}  ,
\label{ccc}%
\end{equation}
we consider the product space $\left[  0,1\right]  ^{C(T,\left[  0,1\right]
)},$ which is compact for the product topology (by Tychonoff\ theorem).
We\ regard\ the index set $T$ as a subset of $\left[  0,1\right]
^{C(T,\left[  0,1\right]  )}$. For this purpose we consider the continuous embedding
$\mathfrak{w}:T\rightarrow\left[  0,1\right]  ^{C(T,\left[  0,1\right]  )}$
which assigns to each $t\in T$ the evaluation function $\mathfrak{w}%
(t)=\gamma_{t},$
defined as
\begin{equation}
\gamma_{t}(\varphi):=\varphi(t),\text{ \ }\varphi\in\mathcal{C}(T,\left[
0,1\right]  ). \label{eval}%
\end{equation}
The closure of $\mathfrak{w(}T)$ in $\left[  0,1\right]  ^{C(T,\left[
0,1\right]  )}$ for the product topology is the compact set
\begin{equation}
\widehat{T}:=\operatorname*{cl}(\mathfrak{m}(T)), \label{ttilda}%
\end{equation}
which is\ the so-called \emph{Stone-\v{C}ech compactification} of $T,$ also denoted by $\beta T.$ The convergence in $\widehat{T}$ is the
pointwise convergence; i.e., for $\gamma\in\widehat{T}$ and a net
$(\gamma_{i})_{i}\subset\widehat{T}$ we have $\gamma_{i}\rightarrow\gamma$ if and only if
\begin{equation}
\gamma_{i}(\varphi)\rightarrow\gamma(\varphi)\text{ \ for all }\varphi
\in\mathcal{C}(T,\left[  0,1\right]  ). \label{conve}%
\end{equation}
Hence, provided that $T$ is completely regular (when endowed with the discrete topology, for isntance), the mapping $\mathfrak{w}$ is an homeomorphism between $T$ and $\mathfrak{w}(T)$, and if $\gamma_{i}=\gamma_{t_{i}}$ and $\gamma=\gamma_{t}$ for
some $t,t_{i}\in T,$ then $\gamma_{i}\rightarrow\gamma$ if and only if
$t_{i}\rightarrow t$ in $T.$

Next, we\ enlarge the original family $\left\{  f_{t},\text{ }t\in T\right\}
$ by introducing\ the functions $f_{\gamma}:X\rightarrow\overline{\mathbb{R}%
},$ $\gamma\in\widehat{T},$ defined\ by
\begin{equation}
f_{\gamma}(z):=\limsup\nolimits_{\gamma_{t}\rightarrow\gamma,\text{ }t\in
T}f_{t}(z). \label{fgam}%
\end{equation}
It can be easily verified that the\ functions $f_{\gamma},$ $\gamma
\in\widehat{T},$ are all convex and satisfy $\sup\nolimits_{\gamma
\in\widehat{T}}f_{\gamma}\leq f.$ Moreover, if\ $(t_{n})_{n}\subset T$
verifies\ $f(z)=\lim_{n}f_{t_{n}}(z),$ with $z\in X,$ then there exist a
subnet $(t_{i})_{i}$ of $(t_{n})_{n}$ and $\gamma\in\widehat{T}$ such that
$\gamma_{t_{i}}\rightarrow\gamma.$ Hence,
\[
f_{\gamma}(z)\geq\limsup\nolimits_{i}f_{t_{i}}(z)=\lim\nolimits_{i}f_{t_{i}%
}(z)=\lim\nolimits_{n}f_{t_{n}}(z)=f(z),
\]
and so $\sup_{\gamma\in\widehat{T}}f_{\gamma}\geq f.$ In other words,
the\ functions $f_{\gamma}$ provide the same supremum $f$ as the original
$f_{t}$'s,%
\[
\sup\nolimits_{\gamma\in\widehat{T}}f_{\gamma}=\sup\nolimits_{t\in T}f_{t}=f.
\]
If $f(x)\in\mathbb{R}$ and $\varepsilon\geq0,$ then the extended
$\varepsilon$-active index set of $f$ at $x$ is
\begin{equation}
\widehat{T}_{\varepsilon}(x):=\left\{  \gamma\in\widehat{T}:f_{\gamma}(x)\geq
f(x)-\varepsilon\right\}  , \label{dgam}%
\end{equation}
with $\widehat{T}(x):=\widehat{T}_{0}(x);$ when $f(x)\not \in \mathbb{R}$ we
set\ $\widehat{T}_{\varepsilon}(x):=\emptyset$ for all $\varepsilon\geq0.$
By\ the compactness of $\widehat{T}$ and the simple fact that, for each $t\in
T$,%
\begin{eqnarray*}
f_{\gamma_{t}}(x)=\limsup_{\gamma_{s}\rightarrow\gamma_{t}}f_{s}%
(x)&= &\sup\left\{  \lim_{i}f_{t_{i}}(x),\text{ }\gamma_{t_{i}}\rightarrow
\gamma_{t}\right\} \\
&\geq&\sup\left\{  \lim_{i}f_{t_{i}}(x),\text{ }t_{i}\rightarrow t\right\}  \geq f_{t}(x),
\end{eqnarray*}
we verify that\ $\widehat{T}_{\varepsilon}(x)\neq\emptyset.$ Also, the
closedness of $\widehat{T}_{\varepsilon}(x)$ comes by using a diagonal process.

The way that the functions $f_{\gamma},$ $\gamma\in\widehat{T}$, are
constructed ensures the fulfillment of the upper semi-continuity property
required\ in Proposition \ref{thmcompact0}. More precisely, assuming\ that
$f(x)\in\mathbb{R}$ and $\varepsilon\geq0,$ for every net $(\gamma_{i}%
)_{i}\subset\widehat{T}_{\varepsilon}(x)$ with an accumulation point $\gamma
\in\widehat{T}_{\varepsilon}(x)$, and every $z\in\operatorname*{dom}f$, we
verify that
\begin{equation}
\limsup\nolimits_{i}f_{\gamma_{i}}(z)\leq f_{\gamma}(z). \label{usc}%
\end{equation}
Indeed, we may assume without loss of generality\ that $\gamma_{i}\rightarrow\gamma$ and $\limsup\nolimits_{i}%
f_{\gamma_{i}}(z)=\lim_{i}f_{\gamma_{i}}(z)=\alpha\in\mathbb{R}$. Next, for each $i$ there exists a net
$(t_{ij})_{j}\subset T$ such that
\[
\gamma_{t_{ij}}\rightarrow_{j}\gamma_{i},\text{ \ }f_{\gamma_{i}}%
(z)=\lim\nolimits_{j}f_{t_{ij}}(z);
\]
that is, $(\gamma_{t_{ij}},f_{t_{ij}}(z))\rightarrow_{j}(\gamma_{i}%
,f_{\gamma_{i}}(z))$ and $(\gamma_{i},f_{\gamma_{i}}(z))\rightarrow_{i}%
(\gamma,\alpha).$ Then we can find a diagonal net $(t_{ij_{i}})_{i}\subset T$
such that $(\gamma_{t_{ij_{i}}},f_{t_{ij_{i}}}(z))\rightarrow_{i}%
(\gamma,\alpha),$ and we obtain
\[
f_{\gamma}(z)\geq\limsup\nolimits_{i}f_{t_{ij_{i}}}(z)=\alpha=\limsup
\nolimits_{i}f_{\gamma_{i}}(z).
\]

The compactification process above covers in a natural way the compact
framework.\ Namely, if $T$ is compact Hausdorff (hence, complete regular), then
the family $\left\{  f_{\gamma},\gamma\in\widehat{T}\right\}  $ above turns
out to be the family of the usc regularization of the functions $f_{(\cdot)}(z)$, given by
\[
\bar{f}_{t}(z):=\limsup_{s\rightarrow t}f_{s}(z).
\]
In this case, the indexed set $T$ does not change; i.e., $\widehat{T}=T.$
Consequently, if additionally the functions $f_{(\cdot)}(z),$ $z\in
\operatorname*{dom}f,$ are already usc,$\ $then we recover the classical
compact and continuous setting, originally\ proposed in \cite{Va69}.

The following theorem characterizes\ $\partial f(x)$ in terms of the functions
$f_{\gamma}$ (see (\ref{fgam})) and the compact set $\widehat{T}(x),$ when
$\tau$ is any topology on $T$. This result is crucial in the subsequent sections.

\begin{theo}
\label{hamdkathir}Let $f_{t}:X\rightarrow\overline{\mathbb{R}},$ $t\in T,$
be\ convex functions and $f=\sup_{t\in T}f_{t}.$ Then, for every $x\in X,$%
\begin{equation}
\partial f(x)=\bigcap\nolimits_{L\in\mathcal{F}(x)}\operatorname*{co}\left\{
\bigcup\nolimits_{\gamma\in\widehat{T}(x)}\partial(f_{\gamma}+\mathrm{I}%
_{L\cap\operatorname*{dom}f})(x)\right\}  . \label{mainformulaalternative}%
\end{equation}

\end{theo}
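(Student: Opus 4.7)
The plan is to exhibit Theorem~\ref{hamdkathir} as a direct corollary of Proposition~\ref{thmcompact0} applied to the enlarged family $\{f_\gamma\}_{\gamma\in\widehat{T}}$. The bulk of the preparatory work has already been carried out in the paragraphs preceding the statement, where the Stone-\v{C}ech compactification $\widehat{T}$ is constructed, the auxiliary functions $f_\gamma$ are defined by the double-$\limsup$ formula (\ref{fgam}), and their key properties are verified. What remains is to observe that these properties are exactly what Proposition~\ref{thmcompact0} requires from an indexed family of convex functions.

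First I would dispose of the trivial cases $f(x)=\pm\infty$, for which both sides of (\ref{mainformulaalternative}) are empty by the convention $\widehat{T}(x):=\emptyset$ fixed just after (\ref{dgam}), and then assume $f(x)\in\mathbb{R}$. Next I would check the hypotheses of Proposition~\ref{thmcompact0} for the family $\{f_\gamma\}_{\gamma\in\widehat{T}}$: the set $\widehat{T}$ is compact Hausdorff by construction (a closed subspace of the Tychonoff-compact Hausdorff product $[0,1]^{\mathcal{C}(T,[0,1])}$), so the active set $\widehat{T}_{\varepsilon}(x)$, being closed in $\widehat{T}$ by the diagonal argument mentioned right after (\ref{dgam}), is also compact Hausdorff; each $f_\gamma$ is convex, as noted just after (\ref{fgam}); the supremum is preserved, $\sup_{\gamma\in\widehat{T}}f_\gamma=f$, also established there; and the upper semi-continuity condition (\ref{uscm}) required by Proposition~\ref{thmcompact0} is precisely the relation (\ref{usc}), whose proof via a diagonal-net extraction has been carried out in the paragraph preceding the theorem. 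Once these are in place, Proposition~\ref{thmcompact0} applied to $\{f_\gamma\}_{\gamma\in\widehat{T}}$ yields
\[
\partial f(x)=\bigcap_{L\in\mathcal{F}(x)}\operatorname{co}\Bigl\{\bigcup_{\gamma\in\widehat{T}(x)}\partial(f_\gamma+\mathrm{I}_{L\cap\operatorname{dom}f})(x)\Bigr\},
\]
because $\operatorname{dom}(\sup_{\gamma}f_\gamma)=\operatorname{dom}f$ and the active set of the enlarged family at $x$ is $\widehat{T}(x)$ by definition.

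I do not expect any genuine obstacle here, since the technical content has been frontloaded into the construction of $\widehat{T}$ and the verification of (\ref{usc}). The one point worth underlining, and the source of the improvement over \cite{CHL19c}, is that the compactification can be carried out from \emph{any} topology $\tau$ on $T$, in particular the discrete one, so that no a priori topological structure on the index set has to be assumed in the statement; this is what allows (\ref{mainformulaalternative}) to simultaneously cover the compact-continuous setting of Proposition~\ref{thmcompact0} and the non-compact non-continuous setting of Proposition~\ref{p2}.
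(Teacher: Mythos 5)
Your proposal is correct and follows essentially the same route as the paper: both reduce the theorem to Proposition \ref{thmcompact0} applied to the enlarged family $\{f_{\gamma}\}_{\gamma\in\widehat{T}}$, whose hypotheses (compactness and Hausdorffness of the index set, convexity of the $f_{\gamma}$'s, preservation of the supremum, and the upper semicontinuity property (\ref{usc})) are exactly those verified in the discussion preceding the statement. The only divergence is that the paper first treats the discrete topology $\tau_{d}$ and then transfers the formula to an arbitrary $\tau$ via the comparisons $f_{\gamma}^{d}\leq f_{\gamma}$ and $\widehat{T}^{d}(x)\subset\widehat{T}(x)$ in (\ref{ad}), whereas you apply the proposition directly for the given $\tau$; since $\widehat{T}$ is Hausdorff as a closed subspace of the product $[0,1]^{\mathcal{C}(T,[0,1])}$ whatever $\tau$ is, and the verification of (\ref{usc}) is topology-independent, your more direct application is legitimate.
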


\begin{proof}
First, we consider\ that\ the topology $\tau$ in $T$ is the discrete topology
$\tau_{d}$, so that\ $C(T,\left[  0,1\right]  ):=\left[  0,1\right]  ^{T}$ and
$\widehat{T}$ is compact.\ Moreover, since $(T,\tau_{d})$ is completely
regular, $\widehat{T}$ is Hausdorff (see, i.e., \cite[$\cal{x}$38]{Mu00}). Since
$f=\sup_{\gamma\in\widehat{T}}f_{\gamma}$ and (\ref{usc}) holds, Proposition
\ref{thmcompact0} applies and yields%
\begin{equation}
\partial f(x)=\bigcap\nolimits_{L\in\mathcal{F}(x)}\operatorname*{co}\left\{
\bigcup\nolimits_{\gamma\in\widehat{T}^{d}(x)}\partial(f_{\gamma}%
^{d}+\mathrm{I}_{L\cap\operatorname*{dom}f})(x)\right\}  , \label{et}%
\end{equation}
where\ $f_{\gamma}^{d}$ and $\widehat{T}^{d}(x)$ are defined as in
(\ref{fgam}) and (\ref{dgam}), respectively,\ but with respect to the topology
$\tau_{d}.$

Now, let $\tau$ be any topology, so that $\tau\subset\tau_{d}$ and, for any
$(\gamma_{t_{i}})_{i}\subset\widehat{T},$%
\begin{align*}
\gamma_{t_{i}}\rightarrow_{\tau_{d}}\gamma &  \Longleftrightarrow\varphi
(t_{i})\rightarrow\gamma(\varphi)\text{ for all }\varphi\in\left[  0,1\right]
^{T}\\
&  \implies\varphi(t_{i})\rightarrow\gamma(\varphi)\text{ for all }\varphi\in
C(T,\left[  0,1\right]  )\\
&  \Longleftrightarrow\gamma_{t_{i}}\rightarrow_{\tau}\gamma;
\end{align*}
hence, for every $z\in X,$%
\[
f_{\gamma}^{d}(z)=\limsup_{\gamma_{t}\rightarrow_{\tau_{d}}\gamma,\text{ }t\in
T}f_{t}(z)\leq\limsup_{\gamma_{t}\rightarrow_{\tau}\gamma,\text{ }t\in T}%
f_{t}(z)=f_{\gamma}(z).
\]
Moreover, since for all $\gamma\in\widehat{T}^{d}(x)$ we have
\[
f(x)=f_{\gamma}^{d}(x)\leq f_{\gamma}(x)\leq f(x),
\]
we deduce that
\begin{equation}
\widehat{T}^{d}(x)\subset\widehat{T}(x)\text{ and }\partial(f_{\gamma}%
^{d}+\mathrm{I}_{L\cap\operatorname*{dom}f})(x)\subset\partial(f_{\gamma
}+\mathrm{I}_{L\cap\operatorname*{dom}f})(x). \label{ad}%
\end{equation}
Thus, by (\ref{et}),%
\[
\partial f(x)\subset\bigcap\nolimits_{L\in\mathcal{F}(x)}\operatorname*{co}%
\left\{  \bigcup\nolimits_{\gamma\in\widehat{T}(x)}\partial(f_{\gamma
}+\mathrm{I}_{L\cap\operatorname*{dom}f})(x)\right\}  ,
\]
and (\ref{mainformulaalternative}) follows as the opposite inclusion is straightforward.
\end{proof}

It is worth observing, from the inclusions in\ (\ref{ad}), that the discrete
topology provides the simplest characterization of $\partial f(x),$ since it
possibly involves less and smaller sets. Also observe that the intersection
over finite-dimensional $L$ in (\ref{mainformulaalternative}) is superfluous
in finite dimensions.

Theorem \ref{hamdkathir} covers the classical Valadier's setting where $T$ is
compact Hausdorff and the mappings $f_{(\cdot)}(z),$ $z\in\operatorname*{dom}%
f,$ are usc. In this case, formula (\ref{mainformulaalternative}) reduces to
(see Proposition \ref{thmcompact0})%
\[
\partial f(x)=\bigcap\nolimits_{L\in\mathcal{F}(x)}\operatorname*{co}\left\{
\bigcup\nolimits_{\gamma\in T(x)}\partial(f_{t}+\mathrm{I}_{L\cap
\operatorname*{dom}f})(x)\right\}  .
\]

Let us also observe that when $T$ admits a one-point compactification
$T_{{\small \Omega}}:=T\cup\left\{  {\small \Omega}\right\}  $
$({\small \Omega}\notin T)$, which occurs if and only if $T$ is locally
compact Hausdorff (hence, complete regular), instead of $\left\{  f_{\gamma},\text{ }\gamma\in\widehat{T}\right\}
$ we can use the family $\left\{  f_{\gamma_{t}},\text{ }t\in T;\text{
}f_{{\small \Omega}}\right\}  ,$ where
\begin{equation}
f_{{\small \Omega}}(z):=\limsup_{t\rightarrow{\small \Omega}}f_{t}(z),\text{
}z\in X. \label{fomega}%
\end{equation}
Indeed, in this case the Stone-\v{C}ech compactification of $T$ is
\[
\widehat{T}:=\left\{  \gamma_{t},\text{ }t\in T\right\}  \cup\left\{  \lim
_{i}\gamma_{t_{i}}:(t_{i})_{i}\subset T,\text{ }t_{i}\rightarrow
{\small \Omega}\right\}  ,
\]
where the limits $\lim_{i}\gamma_{t_{i}}$ and $t_{i}\rightarrow{\small \Omega
}$ are in $\left[  0,1\right]  ^{C(T,\left[  0,1\right]  )}$ and
$T_{{\small \Omega}},$ respectively. In this way we obtain, for all $t\in T,$
\begin{equation}
f_{\gamma_{t}}=\limsup_{\gamma_{s}\rightarrow\gamma_{t},\text{ }s\in T}%
f_{s}=\limsup_{s\rightarrow t,\text{ }s\in T}f_{s},\text{ for }t\in T,
\label{fomegat}%
\end{equation}
due to the topological identification of $T$ with $\mathfrak{w}(T)$, and
\[
f_{\gamma}=\limsup_{\gamma_{t}\rightarrow\gamma,\text{ }t\in T}f_{t}%
=\limsup_{\gamma_{t}\rightarrow\gamma,\text{ }t\rightarrow{\small \Omega
},\text{ }t\in T}f_{t}\text{, for }\gamma\in\widehat{T}\setminus T.\text{ }%
\]
Now, we observe that
\[
\sup_{\gamma\in\widehat{T}\setminus T}f_{\gamma}=\sup_{\gamma\in
\widehat{T}\setminus T}\limsup_{\gamma_{t}\rightarrow\gamma,\text{
}t\rightarrow{\small \Omega},\text{ }t\in T}f_{t}=\limsup_{t\rightarrow
{\small \Omega}}f_{t}=f_{{\small \Omega}}.
\]
It is clear that the family $\left\{  f_{\gamma_{t}},\text{ }t\in T;\text{
}f_{{\small \Omega}}\right\}  $ and the (one-point compactification) index set $T\cup\{\Omega\}$ satisfy the assumption of Proposition
\ref{thmcompact0}, together with $f=\sup\left\{  f_{\gamma_{t}},\text{ }t\in
T;\text{ }f_{{\small \Omega}}\right\}  $. Thus, it suffices to consider Theorem \ref{hamdkathir}
with this new family $\left\{  f_{\gamma_{t}},\text{ }t\in T;\text{
}f_{{\small \Omega}}\right\}  $ instead of the one of the original $f_{\gamma}$'s.

In the particular case when $T=\mathbb{N}$, endowed with\ the
discrete topology, for each $n\in\mathbb{N}$ we obtain
\[
f_{\gamma_{n}}=\limsup_{\gamma_{k}\rightarrow\gamma_{n},\text{ }k\in
\mathbb{N}}f_{k}=\limsup_{k\rightarrow n,\text{ }k\in\mathbb{N}}f_{k}=f_{n},
\]
so that the family to consider in Theorem \ref{hamdkathir} is
\[
\left\{  f_{n},\text{ }n\in\mathbb{N};\text{ }f_{\infty}\right\}  ,
\]
where
\[
f_{{\small \infty}}=\limsup_{n\rightarrow\infty}f_{n}.
\]

\begin{cor}
Assume that $T$ is locally compact Hausdorff. Then for every $x\in X$ formula
(\ref{mainformulaalternative}) holds with
\[
\widehat{T}(x)=\left\{
\begin{array}
[c]{ll}%
\left\{  \gamma_{t},\text{ }t\in T,\text{ }f_{\gamma_{t}}(x)=f(x)\right\}  , &
\text{if }f_{{\small \Omega}}(x)<f(x),\\
\left\{  \gamma_{t},\text{ }t\in T,\text{ }f_{\gamma_{t}}(x)=f(x),\text{
}{\small \Omega}\right\}  , & \text{if }f_{{\small \Omega}}(x)=f(x),
\end{array}
\right.
\]
and, when $T=\mathbb{N}$,%
\[
\widehat{T}(x)=\left\{
\begin{array}
[c]{ll}%
\left\{  n\in\mathbb{N},\text{ }f_{n}(x)=f(x)\right\}  , & \text{if }%
f_{\infty}(x)<f(x),\\
\left\{  n\in\mathbb{N},\text{ }f_{n}(x)=f(x),\text{ }{\small \infty}\right\}
, & \text{if }f_{{\small \infty}}(x)=f(x).
\end{array}
\right.
\]

\end{cor}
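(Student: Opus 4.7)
The plan is to read this corollary as a direct bookkeeping consequence of Theorem~\ref{hamdkathir} together with the one-point-compactification reduction already carried out in the paragraphs immediately preceding the statement. The heavy lifting (identifying $\sup_{\gamma\in\widehat T\setminus T}f_\gamma$ with $f_\Omega$, showing $f_{\gamma_t}=\limsup_{s\to t, s\in T}f_s$ for $t\in T$, and verifying the usc condition of Proposition~\ref{thmcompact0} on $T\cup\{\Omega\}$) has already been done, so what remains is to explicitly read off which points of the reduced compact index set are $0$-active at $x$.

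First I would invoke Theorem~\ref{hamdkathir} applied not to the Stone-\v Cech compactification but to the one-point compactification $T\cup\{\Omega\}$ with the family $\{f_{\gamma_t},\ t\in T;\ f_\Omega\}$. The paragraphs preceding the corollary establish that (i)~this family produces the same supremum $f$, (ii)~it satisfies the upper semicontinuity hypothesis~(\ref{uscm}) on $T\cup\{\Omega\}$, so the conclusion of Proposition~\ref{thmcompact0} (equivalently, Theorem~\ref{hamdkathir}) applies with $\widehat T$ replaced by $T\cup\{\Omega\}$ and with $f_\gamma$ replaced by the members of this reduced family. Thus formula~(\ref{mainformulaalternative}) holds with the active index set $\widehat T(x)$ understood as $\{t\in T: f_{\gamma_t}(x)=f(x)\}\cup\{\Omega: f_\Omega(x)=f(x)\}$.

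Next I would split into the two cases according to whether $\Omega$ contributes. Since $f_\Omega(x)\le f(x)$ always (because $f_\Omega\le \sup_{\gamma\in\widehat T\setminus T}f_\gamma\le f$), the index $\Omega$ lies in the active set iff $f_\Omega(x)=f(x)$. If $f_\Omega(x)<f(x)$, then $\Omega$ is discarded and only the $t\in T$ with $f_{\gamma_t}(x)=f(x)$ remain; this gives the first branch. If $f_\Omega(x)=f(x)$, the symbol $\Omega$ is added to the previous set, yielding the second branch. For the specialization $T=\mathbb{N}$ (with the discrete topology, which is locally compact Hausdorff), I would simply quote the identity $f_{\gamma_n}=\limsup_{k\to n,\,k\in\mathbb{N}}f_k=f_n$ already derived just above the corollary, together with $f_\infty=\limsup_{n\to\infty}f_n=f_\Omega$, and rewrite the two cases with $\mathbb{N}$ and $\infty$ in place of $T$ and $\Omega$.

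The only conceptual point that could be viewed as an obstacle is the justification that one really is allowed to use $T\cup\{\Omega\}$ in place of $\widehat T$ inside formula~(\ref{mainformulaalternative}); but this is precisely the content of the remarks following Theorem~\ref{hamdkathir}, which observe that any compact Hausdorff index set $S\supseteq T$ carrying a family with the same supremum $f$ and satisfying~(\ref{uscm}) may be substituted, since Proposition~\ref{thmcompact0} (and hence Theorem~\ref{hamdkathir}) applies verbatim. Modulo that observation, the proof is a short case-analysis and there are no estimates to carry out.
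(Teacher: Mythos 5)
Your proposal is correct and follows exactly the route the paper intends: the corollary is stated without a separate proof precisely because it is read off from the preceding paragraphs (the one-point-compactification reduction, the identities $f_{\gamma_t}=\limsup_{s\to t}f_s$ and $\sup_{\gamma\in\widehat T\setminus T}f_\gamma=f_\Omega$, and the verification that the reduced family on $T\cup\{\Omega\}$ satisfies the hypotheses of Proposition~\ref{thmcompact0}), after which the displayed active sets are just the case analysis on whether $f_\Omega(x)=f(x)$. Your bookkeeping, including the specialization to $T=\mathbb{N}$ via $f_{\gamma_n}=f_n$ and $f_\infty=\limsup_n f_n$, matches the paper's argument.
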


\section{From non-continuous to continuous. Enhanced formulas}\label{sec5}

We give in this section some new\ characterizations of $\partial f(x),$ which
provide additional insight\ to Theorem \ref{hamdkathir} and that are
applied\ in Section \ref{Secsubd}.

According to Theorem \ref{hamdkathir}, $\partial f(x)$ only involves the
active functions $f_{\gamma},$ i.e., when\ $\gamma\in\widehat{T}(x).$ The idea
behind the following result is to replace these $f_{\gamma}$'s by the\ new functions
$\tilde{f}_{\gamma}:X\rightarrow\mathbb{R}_{\infty},$ $\gamma\in\widehat{T},$
defined as
\begin{equation}
\tilde{f}_{\gamma}(z):=\limsup\limits_{\gamma_{t}\rightarrow\gamma
,f_{t}(x)\rightarrow f(x),\text{ }t\in T}f_{t}(z), \label{fgamat}%
\end{equation}
considering only those\ nets $(t_{i})_{i}\subset T$ associated with\ functions
$f_{t_{i}}$ approaching the supremum function $f$ at the nominal point $x.$
Observe that if $\gamma\in\widehat{T}\setminus\widehat{T}(x),$ then $\tilde
{f}_{\gamma}\equiv-\infty$ by the convention $\sup\emptyset=-\infty,$ and this
function is ignored when taking the supremum. 

Remember that $T$ is endowed with any topology.
\begin{theo}\label{thm6}
For every $x\in X$ we have
\begin{equation}
\partial f(x)=\bigcap\nolimits_{L\in\mathcal{F}(x)}\operatorname*{co}\left\{
\bigcup\nolimits_{\gamma\in\widehat{T}(x)}\partial(\tilde{f}_{\gamma
}+\mathrm{I}_{L\cap\operatorname*{dom}f})(x)\right\}  ,
\label{mainformulaalternativeb}%
\end{equation}
where $\tilde{f}_{\gamma}$ and $\widehat{T}(x)$ are defined in (\ref{fgamat})
and (\ref{dgam}), respectively.
\end{theo}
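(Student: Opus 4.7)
The plan is to reduce both inclusions to Theorem~\ref{hamdkathir}. The easy direction ``$\supseteq$'' in (\ref{mainformulaalternativeb}) is almost immediate: for every $\gamma\in\widehat{T}(x)$ the limsup defining $\tilde{f}_{\gamma}$ is over a smaller class of nets than the one defining $f_{\gamma}$, so $\tilde{f}_{\gamma}\leq f_{\gamma}$ on $X$; at $x$ itself, the existence of a net $(t_{i})\subset T$ with $\gamma_{t_{i}}\rightarrow\gamma$ and $f_{t_{i}}(x)\rightarrow f(x)$ (guaranteed by $\gamma\in\widehat{T}(x)$) forces $\tilde{f}_{\gamma}(x)=f(x)=f_{\gamma}(x)$. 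The standard fact that $g\leq h$ together with $g(x)=h(x)$ implies $\partial g(x)\subseteq\partial h(x)$ then yields the termwise inclusion $\partial(\tilde{f}_{\gamma}+\mathrm{I}_{L\cap\operatorname*{dom}f})(x)\subseteq\partial(f_{\gamma}+\mathrm{I}_{L\cap\operatorname*{dom}f})(x)$, and Theorem~\ref{hamdkathir} delivers the inclusion.

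For the nontrivial direction ``$\subseteq$'', assume $f(x)\in\mathbb{R}$ (otherwise both sides are empty). The idea is to enrich the compactification so that the map $t\mapsto f_{t}(x)$ is itself encoded as one of the coordinates, turning the qualifier ``$f_{t}(x)\rightarrow f(x)$'' into a redundancy. Endow $T$ with the discrete topology $\tau_{d}$, fix a homeomorphism $\chi:\overline{\mathbb{R}}\rightarrow\left[0,1\right]$, and set $\psi(t):=\chi(f_{t}(x))$; since every function on $(T,\tau_{d})$ is continuous, $\psi\in\mathcal{C}(T,\left[0,1\right])$. For any net $(t_{i})\subset T$ with $\gamma_{t_{i}}\rightarrow_{\tau_{d}}\gamma\in\widehat{T}^{d}$, pointwise convergence evaluated at $\psi$ gives $f_{t_{i}}(x)\rightarrow\chi^{-1}(\gamma(\psi))=:L(\gamma)$, so $f_{\gamma}^{d}(x)=L(\gamma)$; and for $\gamma\in\widehat{T}^{d}(x)$ this forces $L(\gamma)=f(x)$, so that every such net automatically satisfies $f_{t_{i}}(x)\rightarrow f(x)$. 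Consequently $\tilde{f}_{\gamma}^{d}(z)=f_{\gamma}^{d}(z)$ for every $z\in X$ whenever $\gamma\in\widehat{T}^{d}(x)$, and Theorem~\ref{hamdkathir} applied in the discrete topology yields
\[
\partial f(x)=\bigcap\nolimits_{L\in\mathcal{F}(x)}\operatorname*{co}\left\{\bigcup\nolimits_{\gamma\in\widehat{T}^{d}(x)}\partial(\tilde{f}_{\gamma}^{d}+\mathrm{I}_{L\cap\operatorname*{dom}f})(x)\right\}.
\]

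To transport this formula back to the prescribed topology $\tau$, mimic the projection argument from the proof of Theorem~\ref{hamdkathir}. Restricting evaluations from $\left[0,1\right]^{T}$ to $\mathcal{C}(T,\left[0,1\right])$ defines a map $\pi:\widehat{T}^{d}\rightarrow\widehat{T}$; any net $(t_{i})\subset T$ with $\gamma_{t_{i}}\rightarrow_{\tau_{d}}\gamma$ also satisfies $\gamma_{t_{i}}\rightarrow_{\tau}\pi(\gamma)$, so $\pi$ maps $\widehat{T}^{d}(x)$ into $\widehat{T}(x)$, and because the set of admissible nets in the limsup defining $\tilde{f}_{\pi(\gamma)}$ is larger, one has $\tilde{f}_{\gamma}^{d}\leq\tilde{f}_{\pi(\gamma)}$ on $X$ with equality at $x$ (both equal $f(x)$). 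The same subdifferential monotonicity principle used in the easy direction then gives $\partial(\tilde{f}_{\gamma}^{d}+\mathrm{I}_{L\cap\operatorname*{dom}f})(x)\subseteq\partial(\tilde{f}_{\pi(\gamma)}+\mathrm{I}_{L\cap\operatorname*{dom}f})(x)$, finishing the proof. The main technical obstacle I anticipate is verifying carefully that the encoding $\psi$ captures $f_{\gamma}^{d}(x)$ with \emph{equality} (not just an inequality) and that $\pi$ is compatible with the $\tilde{f}$-construction; once those two points are nailed down, the remainder is standard subdifferential bookkeeping plus one invocation of Theorem~\ref{hamdkathir}.
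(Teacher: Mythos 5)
Your proof is correct. It rests on the same underlying mechanism as the paper's --- pass to the discrete topology, where every $[0,1]$-valued function of $t$ becomes a coordinate of the Stone--\v{C}ech compactification, so that $\tau_{d}$-convergence of $(\gamma_{t_{i}})_{i}$ forces convergence of such coordinates --- and it transports the discrete formula to an arbitrary $\tau$ by the same restriction argument that gives (\ref{ad}) in the proof of Theorem \ref{hamdkathir}. But the execution is genuinely different and, for the theorem as stated, leaner. The paper truncates to $g_{t}=\max\{f_{t},-1\}$ (using $\partial f(x)\neq\emptyset$ to get $f\geq-1$ near $x$), encodes $g_{t}(z)$ for \emph{every} $z\in\operatorname*{dom}f$ so that each $g_{\gamma}$ becomes a genuine limit, identifies the auxiliary active set with $\widehat{T}(x)$, and then compares $g_{\gamma}$ with $\max\{\tilde{f}_{\gamma},-1\}$ via Proposition \ref{thmcompact0} applied to a two-element family. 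You instead encode only the single scalar $f_{t}(x)$, composed with a homeomorphism $\overline{\mathbb{R}}\cong[0,1]$ to absorb possible infinite values, and deduce that along any $\tau_{d}$-convergent net the values $f_{t_{i}}(x)$ converge to a limit depending only on $\gamma$; hence the qualifier $f_{t}(x)\rightarrow f(x)$ is vacuous on $\widehat{T}^{d}(x)$, $\tilde{f}_{\gamma}^{d}=f_{\gamma}^{d}$ there, and the discrete case of (\ref{mainformulaalternativeb}) collapses to that of (\ref{mainformulaalternative}). The two points you flagged as delicate do check out: the net limit coincides with the filter limsup because every convergent net to $\gamma$ produces the same value $\chi^{-1}(\gamma(\psi))$, and $\pi$ only enlarges the class of admissible nets, preserving the value $f(x)$ at $x$. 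What the paper's heavier route buys is the by-products (\ref{mainformulaalternatived}) and (\ref{fcor1}) --- single-net and genuine-limit versions of the formula --- which are used later (e.g.\ in the proof of Theorem \ref{hamdkathirbisb}); your shortcut does not yield those, but it fully proves the stated equality.
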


\begin{proof}
We only need to check the inclusion \textquotedblleft$\subset$
\textquotedblright\ when $\tau$ is the discrete topology $\tau_d$, and $\partial f(x)\neq\emptyset;$ hence, $f$ is lsc at
$x$ and proper, and we may suppose, without loss of generality, that $x=\theta$ and \ $f(\theta
)=0.$ Let us\ fix a closed convex neighborhood $U$ of $\theta$ such that
$f(z)\geq-1$, for all $z\in U,$ and denote by $g_{t}:X\rightarrow
\mathbb{R}_{\infty},$ $t\in T,$ the functions given by
\begin{equation}
g_{t}(z):=\max\left\{  f_{t}(z),-1\right\}  . \label{gt}%
\end{equation}
Thus, for all $z\in U,$
\[
f(z)=\max\left\{  f(z),-1\right\}  =\sup\nolimits_{t\in T}\max\left\{
f_{t}(z),-1\right\}  =\sup\nolimits_{t\in T}g_{t}(z),
\]
and so, applying (\ref{mainformulaalternative}) with the discrete topology
$\tau_{d\text{ }}$on $T$ to the family $\left\{  g_{t},\text{ }t\in T\right\}
,$
\begin{equation}
\partial f(\theta)=\partial(\sup\nolimits_{t\in T}g_{t})(\theta)=\bigcap
\nolimits_{L\in\mathcal{F}(\theta)}\operatorname*{co}\left\{  \bigcup
\nolimits_{\gamma\in\widetilde{T}(\theta)}\partial(g_{\gamma}+\mathrm{I}%
_{L\cap\operatorname*{dom}f})(\theta)\right\}  , \label{im}%
\end{equation}
where $g_{\gamma}:=\limsup\limits_{\gamma_{t}\rightarrow\gamma,\text{ }t\in
T}g_{t}$ and $\widetilde{T}(\theta):=\left\{  \gamma\in\widehat{T}:
g_{\gamma}(\theta)=0\right\}  .$

Let us first\ verify that
\begin{equation}
\widetilde{T}(\theta)=\widehat{T}(\theta). \label{tta}%
\end{equation}
Indeed, if $\gamma\in\widetilde{T}(\theta)$ so that
\[
0=g_{\gamma}(\theta)=\limsup\limits_{\gamma_{t}\rightarrow\gamma,\text{ }t\in
T}g_{t}(\theta)\leq\max\left\{  f_{\gamma}(\theta),-1\right\}  \leq
\max\left\{  f(\theta),-1\right\}  =0,
\]
then $f_{\gamma}(\theta)=0$ and, so, $\gamma\in\widehat{T}(\theta).$
Conversely, if $\gamma\in\widehat{T}(\theta),$ then
\[
0=f_{\gamma}(\theta)\leq g_{\gamma}(\theta)\leq\sup_{\gamma\in\widehat{T}%
}g_{\gamma}(\theta)=\sup_{t\in T}g_{t}(\theta)=f(\theta)=0,
\]
and so $\gamma\in\widetilde{T}(\theta).$

Next, we fix\ $\gamma\in\widetilde{T}(\theta)$ and, by the definition of this set, let
$(\bar{t}_{i})_{i}\subset T$ be a net such that $\gamma_{\bar{t}_{i}%
}\rightarrow\gamma$\ and $\lim_{i}g_{\bar{t}_{i}}(\theta)=0;$ hence,
\begin{equation}
\lim_{i}f_{\bar{t}_{i}}(\theta)=\lim_{i}g_{\bar{t}_{i}}(\theta)=0. \label{nt}%
\end{equation}
We\ also introduce the functions $\varphi_{z},$ $z\in\operatorname*{dom}f,$
defined on $T$ as follows\
\[
\varphi_{z}(t):=(\max\left\{  f(z)+1,1\right\}  )^{-1}(g_{t}(z)+1),
\]
which are ($\tau_{d}$-)continuous functions such that $\varphi_{z}%
(t)\in\left[  0,1\right]  $ for all $t\in T,$ because
\[
-1\leq g_{t}(z)\leq\max\left\{  f(z),-1\right\}  <+\infty\text{\ for all }t\in
T\text{ and }z\in\operatorname*{dom}f.
\]
Hence, for every $\gamma_{t_{i}}\rightarrow\gamma$ we have $\varphi_{z}%
(t_{i})\rightarrow_{i}\gamma(\varphi_{z}),$ and this entails\
\begin{equation}
g_{t_{i}}(z)\rightarrow_{i}-1+(\max\left\{  f(z)+1,1\right\}  )\gamma
(\varphi_{z})\in\mathbb{R}. \label{ns}%
\end{equation}
Consequently, by taking into account that $\gamma_{\bar
{t}_{i}}\rightarrow\gamma$ and $\lim_{i}f_{\bar{t}_{i}}(\theta)=0$ (see
(\ref{nt})) we obtain
\begin{equation}\label{be1}
g_{\gamma}=\limsup\limits_{\gamma
_{t}\rightarrow\gamma,\text{ }t\in T}g_{t}=\lim_{\gamma_{t}\rightarrow\gamma}g_{t}
=\lim_{\gamma_{t}\rightarrow\gamma,\text{
}f_{t}(\theta)\rightarrow0}g_{t},
\end{equation}
which leads us to
\begin{eqnarray}
g_{\gamma}+\mathrm{I}_{L\cap\operatorname*{dom}%
f}&=&\lim\limits_{\gamma_{t}\rightarrow\gamma,f_{t}(\theta)\rightarrow0}%
(g_{t}+\mathrm{I}_{L\cap\operatorname*{dom}f})\\
&\leq &\max\left\{  \limsup
\limits_{\gamma_{t}\rightarrow\gamma,f_{t}(\theta)\rightarrow0}(f_{t}%
+\mathrm{I}_{L\cap\operatorname*{dom}f}),-1\right\}. \label{be}
\end{eqnarray}
But the two functions on the left and the right have the same value $0$
at $\theta,$ and so 
\begin{align*}
\partial(g_{\gamma}+\mathrm{I}_{L\cap\operatorname*{dom}f})(\theta)  &
\subset\partial\left(  \max\left\{  \limsup\limits_{\gamma_{t}\rightarrow
\gamma,f_{t}(\theta)\rightarrow0}f_{t}+\mathrm{I}_{L\cap\operatorname*{dom}%
f},-1\right\}  \right)  (\theta)\\
&  =\partial\left(  \limsup\limits_{\gamma_{t}\rightarrow\gamma,f_{t}%
(\theta)\rightarrow0}f_{t}+\mathrm{I}_{L\cap\operatorname*{dom}f}\right)
(\theta)=\partial\left(  \tilde{f}_{\gamma}+\mathrm{I}_{L\cap
\operatorname*{dom}f}\right)  (\theta),
\end{align*}
where the first equality comes from Proposition \ref{thmcompact0} applied to
the finite family $\left\{  \tilde{f}_{\gamma},-1\right\}  .$ Finally, the
desired inclusion follows thanks to (\ref{im}) and (\ref{tta}).
\end{proof}
\ Let us introduce a function which asigns to each given $\gamma\in
\widehat{T}(x)$ a net $(t_{i}^{\gamma})_{i}\subset T$ such that
\begin{equation}
\gamma_{t_{i}^{\gamma}}\rightarrow\gamma\text{, }f_{t_{i}^{\gamma}%
}(x)\rightarrow f(x). \label{selection}%
\end{equation}
Then, according to\ (\ref{be}),
\[
\lim_{\gamma_{t}\rightarrow\gamma}(g_{t}+\mathrm{I}_{\operatorname*{dom}%
f})=\lim\limits_{i}(g_{t_{i}^{\gamma}}+\mathrm{I}_{L\cap\operatorname*{dom}%
f})\leq\max\left\{  \limsup\limits_{i}(f_{t_{i}^{\gamma}}+\mathrm{I}%
_{L\cap\operatorname*{dom}f}),-1\right\}  ,
\]
and we obtain, reasoning as above,
\begin{equation}
\partial f(x)=\bigcap\nolimits_{L\in\mathcal{F}(x)}\operatorname*{co}\left\{
\bigcup\nolimits_{\gamma\in\widehat{T}(x)}\partial(\limsup_{i}f_{t_{i}%
^{\gamma}}+\mathrm{I}_{L\cap\operatorname*{dom}f})(x)\right\}  .
\label{mainformulaalternatived}%
\end{equation}

The use of the functions $g_{t}$ allows us to formulate $\partial f(x)$
involving only limits instead of upper limits. In fact, from\ (\ref{im}),
(\ref{tta}) and (\ref{be1}) we get
\begin{equation}
\partial f(x)=\bigcap\nolimits_{L\in\mathcal{F}(x)}\operatorname*{co}\left\{
\bigcup\nolimits_{\gamma\in\widehat{T}(x)}\partial\left(\lim\limits_{\gamma
_{t}\rightarrow\gamma,f_{t}(x)\rightarrow f(x)}(g_{t}+\mathrm{I}%
_{L\cap\operatorname*{dom}f})\right)(x)\right\}  . \label{fcor1}%
\end{equation}

\begin{cor}
\label{compactv}Suppose that the function\ $f$ is finite and continuous somewhere. Then, for every $x\in X,$%
\begin{align}
\partial f(x)  &  =\overline{\operatorname*{co}}\left\{  \bigcup
\nolimits_{\gamma\in\widehat{T}(x)}\partial(\limsup\nolimits_{i}f_{t_{i}^{\gamma}%
})(x)\right\}  +\mathrm{N}_{\operatorname*{dom}f}(x)\label{fc1}\\
&  =\operatorname*{co}\left\{  \bigcup\nolimits_{\gamma\in\widehat{T}%
(x)}\partial(\limsup\nolimits_{i}f_{t_{i}^{\gamma}})(x)\right\}  +\mathrm{N}%
_{\operatorname*{dom}f}(x)\text{ \ (if }X=\mathbb{R}^{n}), \label{fc2}%
\end{align}
where $(t_{i}^{\gamma})$ is defined\ in (\ref{selection}).
\end{cor}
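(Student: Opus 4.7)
My plan is to derive both \eqref{fc1} and \eqref{fc2} from formula \eqref{mainformulaalternatived}, using the continuity of $f$ somewhere to eliminate the indicator $\mathrm{I}_{L\cap\operatorname*{dom}f}$ and the intersection $\bigcap_{L\in\mathcal{F}(x)}$, at the cost of a residual normal cone $\mathrm{N}_{\operatorname*{dom}f}(x)$. Set $h_\gamma:=\limsup_i f_{t_i^\gamma}$. First I would record three elementary properties: (i) $h_\gamma$ is convex, because $\limsup_i(\lambda a_i+(1-\lambda)b_i)\leq\lambda\limsup_i a_i+(1-\lambda)\limsup_i b_i$ applied to the convex $f_{t_i^\gamma}$; (ii) $h_\gamma(x)=f(x)$ and $h_\gamma\leq f$ pointwise on $X$, directly from the selection \eqref{selection} and the bound $f_{t_i^\gamma}\leq f$; (iii) consequently, $\partial h_\gamma(x)\subset\partial f(x)$ for every $\gamma\in\widehat{T}(x)$, which combined with $\partial f(x)+\mathrm{N}_{\operatorname*{dom}f}(x)\subset\partial f(x)$ and the $w^*$-closedness of $\partial f(x)$ yields the easy inclusion ``$\supset$'' in \eqref{fc1}.

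For the inclusion ``$\subset$'', I pick a continuity point $\bar z$ of $f$, so that $\bar z\in\operatorname*{int}(\operatorname*{dom}f)\subset\operatorname*{dom}h_\gamma$; since $h_\gamma\leq f$ is convex and bounded above near $\bar z$, it is continuous at $\bar z$. Restricting to the cofinal subfamily $\{L\in\mathcal{F}(x):\bar z\in L\}$ of $\mathcal{F}(x)$, Moreau-Rockafellar \eqref{MR} applies to both $(h_\gamma,\mathrm{I}_{L\cap\operatorname*{dom}f})$ and $(\mathrm{I}_L,\mathrm{I}_{\operatorname*{dom}f})$, giving
\[
\partial(h_\gamma+\mathrm{I}_{L\cap\operatorname*{dom}f})(x)=\partial h_\gamma(x)+L^\perp+\mathrm{N}_{\operatorname*{dom}f}(x).
\]
Since $L^\perp+\mathrm{N}_{\operatorname*{dom}f}(x)$ is a convex cone, it factors out of both the union over $\gamma$ and the convex hull, so \eqref{mainformulaalternatived} becomes
\[
\partial f(x)=\bigcap_{L\ni\bar z}\Bigl[\operatorname*{co}\Bigl\{\bigcup\nolimits_{\gamma\in\widehat{T}(x)}\partial h_\gamma(x)\Bigr\}+\mathrm{N}_{\operatorname*{dom}f}(x)+L^\perp\Bigr].
\]
Applying the closure relation \eqref{tr} then yields $\partial f(x)\subset\operatorname*{cl}\bigl\{\operatorname*{co}\{\bigcup_\gamma\partial h_\gamma(x)\}+\mathrm{N}_{\operatorname*{dom}f}(x)\bigr\}$, which combined with ``$\supset$'' produces \eqref{fc1}. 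For the finite-dimensional case \eqref{fc2}, the outer closure disappears: the intersection $\bigcap_L$ trivializes by taking $L=\mathbb{R}^n$, and $\operatorname*{co}\bigl\{\bigcup_{\gamma\in\widehat{T}(x)}\partial h_\gamma(x)\bigr\}+\mathrm{N}_{\operatorname*{dom}f}(x)$ is already closed via standard arguments using the compactness of $\widehat{T}(x)$ and the upper-semicontinuity of $\gamma\mapsto\partial h_\gamma(x)$.

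The \textbf{main obstacle} I anticipate is the identification of $\operatorname*{cl}\{\operatorname*{co}\{\bigcup_\gamma\partial h_\gamma(x)\}+\mathrm{N}_{\operatorname*{dom}f}(x)\}$ with $\overline{\operatorname*{co}}\{\bigcup_\gamma\partial h_\gamma(x)\}+\mathrm{N}_{\operatorname*{dom}f}(x)$ in the locally convex setting, since the distributivity $\operatorname*{cl}(A+K)=\operatorname*{cl}(A)+K$ for a closed convex cone $K$ is not automatic; the two-sided inclusion strategy above bypasses this by leveraging that $\partial f(x)$ itself is $w^*$-closed. A secondary nuisance is the possibility that some $h_\gamma$ fails to be proper, which I would handle by a preliminary truncation $\max\{h_\gamma,-1\}$ near $x$, mimicking the device \eqref{gt} used in the proof of Theorem \ref{thm6}.
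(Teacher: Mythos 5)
Your outline follows the paper's route up to the decisive point, and then stops exactly where the real work begins. The easy inclusion via $h_\gamma\le f$, $h_\gamma(x)=f(x)$, the Moreau--Rockafellar splitting $\partial(h_\gamma+\mathrm{I}_{L\cap\operatorname*{dom}f})(x)=\partial h_\gamma(x)+\mathrm{N}_{\operatorname*{dom}f}(x)+L^{\perp}$ for $L\ni\bar z$, the factoring of the cone out of the convex hull, and the use of (\ref{tr}) to reach $\partial f(x)\subset\operatorname*{cl}(A+B)$ with $A:=\operatorname*{co}\{\bigcup_\gamma\partial h_\gamma(x)\}$, $B:=\mathrm{N}_{\operatorname*{dom}f}(x)$ --- all of this is what the paper does. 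Formula (\ref{fc2}) is also fine (and you do not even need the closedness argument you sketch for it: taking $L=\mathbb{R}^n$ in the intersection identity gives (\ref{fc2}) directly, with no closure to remove).

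The gap is in (\ref{fc1}). You correctly name the obstacle --- passing from $\operatorname*{cl}(A+B)$ to $\operatorname*{cl}(A)+B$ --- but your claim that "the two-sided inclusion strategy bypasses this by leveraging that $\partial f(x)$ is $w^*$-closed" is not true. What your two inclusions actually give is the sandwich $\operatorname*{cl}(A)+B\subset\partial f(x)\subset\operatorname*{cl}(A+B)$, and since $\operatorname*{cl}(A)+B$ can be a proper subset of $\operatorname*{cl}(A+B)$ (the sum of a closed set and a closed cone need not be closed), these do not pinch to the asserted equality. The paper closes this gap with a support-function argument that your proposal is missing: write $\operatorname*{cl}(A+B)=\partial\sigma_{A+B}(\theta)=\partial(\sigma_A+\sigma_B)(\theta)$, observe that $A\subset\partial f(\theta)$ (because $h_\gamma\le f$ with equality at $\theta$), hence $\sigma_A\le\sigma_{\partial f(\theta)}\le f$ is bounded above near the continuity point $x_0$ of $f$, so $\sigma_A$ is continuous at $x_0$ while $\sigma_B(x_0)\le 0$; Moreau--Rockafellar (\ref{MR}) then gives $\partial(\sigma_A+\sigma_B)(\theta)=\partial\sigma_A(\theta)+\partial\sigma_B(\theta)=\operatorname*{cl}(A)+B$. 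This is precisely where the hypothesis that $f$ is continuous somewhere is used a second time, and without this (or an equivalent) step the proof of (\ref{fc1}) is incomplete. Your secondary remark about improper $h_\gamma$ and the truncation $\max\{h_\gamma,-1\}$ is a legitimate technical point, handled as you suggest.
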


\begin{proof}
Suppose, without loss of generality, that $x=\theta$ and $f(\theta)=0.$ According to
(\ref{mainformulaalternatived}), and using (\ref{MR}),%
\begin{align*}
\partial f(\theta)  &  =\bigcap\nolimits_{L\in\mathcal{F}(\theta
)}\operatorname*{co}\left\{  \bigcup\nolimits_{\gamma\in\widehat{T}(\theta
)}\partial(\limsup\nolimits_{i}f_{t_{i}^{\gamma}}+\mathrm{I}_{L\cap\operatorname*{dom}%
f})(\theta)\right\} \\
&  =\bigcap\nolimits_{L\in\mathcal{F}(\theta)}\left(  \operatorname*{co}%
\left\{  \bigcup\nolimits_{\gamma\in\widehat{T}(\theta)}\partial(\limsup\nolimits
_{i}f_{t_{i}^{\gamma}})(\theta)\right\}  +\mathrm{N}_{\operatorname*{dom}%
f}(\theta)+L^{\perp}\right)  ,
\end{align*}
and (\ref{fc2}) follows. To prove (\ref{fc1}) we first
obtain, due to the last relation and (\ref{tr}),%
\begin{equation}
\partial f(\theta)\subset\operatorname*{cl}\left(  A+B\right)  =\partial
\sigma_{A+B}(\theta)=\partial(\sigma_{A}+\sigma_{B})(\theta), \label{ba}%
\end{equation}
where $A:=\operatorname*{co}\left\{  \bigcup\nolimits_{\gamma\in
\widehat{T}(\theta)}\partial(\limsup_{i}f_{t_{i}^{\gamma}})(\theta)\right\}  $
and $B:=\mathrm{N}_{\operatorname*{dom}f}(\theta).$

Since $\limsup_{i}f_{t_{i}^{\gamma}}\leq f$ and both functions coincide at
$\theta,$ we have $A\subset\partial f(\theta).$ There also exist\ $m\geq0,$
$x_{0}\in\operatorname*{dom}f$ and $\theta$-neighborhood $U\subset X$ such
that $f(x_{0}+y)\leq m$,\ for all $y\in U.$ Then
\begin{equation}
\sigma_{A}(x_{0}+y)\leq\sigma_{\partial f(\theta)}(x_{0}+y)\leq f(x_{0}+y)\leq
m\text{ \ for all }y\in U; \label{cont}%
\end{equation}
that is, $\sigma_{A}$ is continuous at $x_{0}.$ Consequently,
since\ $\mathrm{\sigma}_{B}(x_{0})\leq0,$ (\ref{ba}) and (\ref{MR}) entail
\[
\partial f(\theta)\subset\partial\sigma_{A}(\theta)+\partial\sigma_{B}%
(\theta)=\operatorname*{cl}(A)+B,
\]
and the inclusion \textquotedblleft$\subset$\textquotedblright\ in (\ref{fc1})
follows. The opposite inclusion is straightforward.
\end{proof}

The following corollary provides a
characterization of $\partial f(x)$ in terms only of the active original
functions $f_{t}$'s.

\begin{cor}
\label{thmcompact0refiement}Fix $x\in X.$ If\ for each net $(t_{i})_{i}\subset
T$ satisfying\ $f_{t_{i}}(x)\rightarrow f(x),$ there exist a subnet
$(t_{i_{j}})_{j}\subset T$ of $(t_{i})_{i}$ and an index\ $t\in T$ such that\
\begin{equation}
\limsup\nolimits_{j}f_{t_{i_{j}}}(z)\leq f_{t}(z)\text{ \ \ for all }%
z\in\operatorname*{dom}f, \label{hyp}%
\end{equation}
then we have
\[
\partial f(x)=\bigcap\nolimits_{L\in\mathcal{F}(x)}\operatorname*{co}\left\{
\bigcup\nolimits_{t\in T(x)}\partial(f_{t}+\mathrm{I}_{L\cap
\operatorname*{dom}f})(x)\right\}  .
\]

\end{cor}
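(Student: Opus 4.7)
The plan is to derive the claimed equality from formula (\ref{mainformulaalternatived}), which expresses $\partial f(x)$ through the subdifferentials of $\limsup_{i}f_{t_{i}^{\gamma}}+\mathrm{I}_{L\cap\operatorname*{dom}f}$, where $\gamma\in\widehat{T}(x)$ and $(t_{i}^{\gamma})$ is any net satisfying (\ref{selection}). Hypothesis (\ref{hyp}) will let me dominate each such $\limsup$ by an \emph{original} function $f_{t^{\gamma}}$ with $t^{\gamma}\in T(x)$; the reverse inclusion will follow from Theorem \ref{thm6} by comparing $f_{t}$ with $\tilde{f}_{\gamma_{t}}$ for $t\in T(x)$.

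For the inclusion ``$\subset$'', I would fix $\gamma\in\widehat{T}(x)$ and choose any net $(t_{i}^{\gamma})$ as in (\ref{selection}). Since $f_{t_{i}^{\gamma}}(x)\to f(x)$, hypothesis (\ref{hyp}) applies to $(t_{i}^{\gamma})$ and yields a subnet $(t_{i_{j}}^{\gamma})$ together with an index $t^{\gamma}\in T$ such that
\[
\limsup\nolimits_{j}f_{t_{i_{j}}^{\gamma}}(z)\leq f_{t^{\gamma}}(z)\quad\text{for all }z\in\operatorname*{dom}f.
\]
Evaluating at $z=x\in\operatorname*{dom}f$ forces $f(x)=\limsup_{j}f_{t_{i_{j}}^{\gamma}}(x)\leq f_{t^{\gamma}}(x)\leq f(x)$, whence $t^{\gamma}\in T(x)$. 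The subnet $(t_{i_{j}}^{\gamma})$ still satisfies (\ref{selection}), so formula (\ref{mainformulaalternatived}) remains valid with $(t_{i_{j}}^{\gamma})$ in place of $(t_{i}^{\gamma})$. Combining the pointwise inequality above with the fact that both sides take the value $f(x)$ at $x$, the elementary principle ``$h\leq g$ and $h(x)=g(x)$ imply $\partial h(x)\subset\partial g(x)$'' gives
\[
\partial\bigl(\limsup\nolimits_{j}f_{t_{i_{j}}^{\gamma}}+\mathrm{I}_{L\cap\operatorname*{dom}f}\bigr)(x)\subset\partial\bigl(f_{t^{\gamma}}+\mathrm{I}_{L\cap\operatorname*{dom}f}\bigr)(x),
\]
and the inclusion ``$\subset$'' follows after taking convex hulls and the intersection over $L\in\mathcal{F}(x)$.

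For the reverse inclusion ``$\supset$'', I would fix $t\in T(x)$ and note that the constant net $s_{i}\equiv t$ trivially satisfies (\ref{selection}); hence by the very definition (\ref{fgamat}) of $\tilde{f}_{\gamma_{t}}$ we have $\tilde{f}_{\gamma_{t}}\geq f_{t}$ on $X$, with $\tilde{f}_{\gamma_{t}}(x)=f_{t}(x)=f(x)$. In particular $\gamma_{t}\in\widehat{T}(x)$, and the same ``$h\leq g$'' principle yields
\[
\partial(f_{t}+\mathrm{I}_{L\cap\operatorname*{dom}f})(x)\subset\partial(\tilde{f}_{\gamma_{t}}+\mathrm{I}_{L\cap\operatorname*{dom}f})(x).
\]
Taking the union over $t\in T(x)$, then convex hulls and the intersection over $L$, Theorem \ref{thm6} delivers ``$\supset$''.

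The main obstacle is convincing oneself that formula (\ref{mainformulaalternatived}) is valid for \emph{any} admissible selection $(t_{i}^{\gamma})$ satisfying (\ref{selection}) rather than one pre-assigned choice; this flexibility is what allows the initial net to be replaced by the subnet produced by (\ref{hyp}). Once this point is secured, the remainder is bookkeeping: (\ref{hyp}) at $z=x$ forces $t^{\gamma}\in T(x)$, and the monotonicity of the subdifferential under the relation ``$h\leq g$, $h(x)=g(x)$'' handles both inclusions.
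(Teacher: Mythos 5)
Your proof is correct and follows essentially the same route as the paper: both apply formula (\ref{mainformulaalternatived}) with the selection $(t_{i}^{\gamma})$ replaced by the subnet produced by hypothesis (\ref{hyp}), note that (\ref{hyp}) at $z=x$ forces $t^{\gamma}\in T(x)$, and conclude by the monotonicity of the subdifferential under ``$h\leq g$, $h(x)=g(x)$''. The only (harmless) difference is that you route the reverse inclusion through Theorem \ref{thm6} and the functions $\tilde{f}_{\gamma_{t}}$, whereas it follows directly from $f_{t}+\mathrm{I}_{L\cap\operatorname*{dom}f}\leq f+\mathrm{I}_{L\cap\operatorname*{dom}f}$ with equality at $x$ for $t\in T(x)$.
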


\begin{proof}
Given any $\gamma\in\widehat{T}(x)$ such that $\gamma_{t_{i}}\rightarrow
\gamma$ and $f_{t_{i}}(x)\rightarrow f(x)$, for some net $(t_{i})_{i}\subset
T,$ we choose\ a subnet $(t_{i_{j}}^{\gamma})_{j}$ in (\ref{selection})
satisfying (\ref{hyp}) for a certain $t^{\gamma}\in T.$ Then $t^{\gamma}\in
T(x),$ taking into account (\ref{hyp}) with $z=x,$ and by
(\ref{mainformulaalternatived})
\begin{align*}
\partial f(x)  &  =\bigcap\nolimits_{L\in\mathcal{F}(x)}\operatorname*{co}%
\left\{  \bigcup\nolimits_{\gamma\in\widehat{T}(x)}\partial(\limsup\nolimits_{j}f_{t_{i_{j}}^{\gamma}}+\mathrm{I}_{L\cap\operatorname*{dom}f})(x)\right\}
\\
&  \subset\bigcap\nolimits_{L\in\mathcal{F}(x)}\operatorname*{co}\left\{
\bigcup\nolimits_{\gamma\in\widehat{T}(x)}\partial(f_{t^{\gamma}}%
+\mathrm{I}_{L\cap\operatorname*{dom}f})(x)\right\}  ,
\end{align*}
where the last inclusion holds as $\limsup_{j}f_{t_{i_{j}}^{\gamma}%
}+\mathrm{I}_{L\cap\operatorname*{dom}f}\leq f_{t^{\gamma}}+\mathrm{I}%
_{L\cap\operatorname*{dom}f},$ by (\ref{hyp}), and these two functions take the same value
at $x.$ The inclusion \textquotedblleft$\subset$\textquotedblright%
\ follows as we have shown that $t^{\gamma}\in T(x).$ The opposite inclusion
is immediate.
\end{proof}

\section{From continuous to non-continuous\label{Secsubd}}

In this\ section, we consider again a family $f_{t}:X\rightarrow
\overline{\mathbb{R}},$ $t\in T,$ of convex functions defined on $X,$ and\ the
supremum function $f:=\sup_{t\in T}f_{t}.$ Based on the results of\ the
previous section we provide characterizations of $\partial f(x)$
involving\ only the $f_{t}$'s and not the regularized ones, i.e, the $f_{\gamma}$'s. We
shall need the following technical\ lemmas. In what follows,
$\operatorname*{cl}^{s}$ stands for the strong topology on $X^{\ast}$ (usually
denoted by $\beta(X^{\ast},X)).$

\begin{lem}
\label{lemteco}Assume that the convex functions\ $f_{t},$ $t\in T,$ are
proper, lsc, and\ such that $f_{\mid\operatorname*{aff}(\operatorname*{dom}%
f)}$ is continuous on $\operatorname*{ri}(\operatorname*{dom}f),$ assumed nonempty. Let $x\in\operatorname*{dom}f$ and the net $(z_{i}^{\ast})_{i\in
I}\subset X^{\ast}$ such that
\begin{equation}
\lim_{i}(\left\langle z_{i}^{\ast},x\right\rangle -\inf\nolimits_{t\in T}%
f_{t}^{\ast}(z_{i}^{\ast}))=f(x), \label{cd1}%
\end{equation}
and\ for all $z\in\operatorname*{dom}f$\
\begin{equation}
\limsup_{i}\left(  \left\langle z_{i}^{\ast},z\right\rangle -\inf
\nolimits_{t\in T}f_{t}^{\ast}(z_{i}^{\ast})\right)  >-\infty. \label{cd2}%
\end{equation}
Then, there exist a subnet $(z_{i_{j}}^{\ast})_{j}$ of $(z_{i}^{\ast})_{i}$ and
$z^{\ast}\in X^{\ast}$ such that
\begin{equation}
z^{\ast}\in\operatorname*{cl}\left(  \bigcup\nolimits_{t\in T_{\varepsilon
}(x)}\partial_{\varepsilon}f_{t}(x)+(\operatorname*{aff}(\operatorname*{dom}%
f))^{\perp}\right)  ,\text{ for all }\varepsilon>0, \label{gf00}%
\end{equation}
and 
\begin{equation}
\left\langle z_{i_{j}}^{\ast}-z^{\ast},z\right\rangle \rightarrow_{j}0  ,\text{ for all }z\in\operatorname*{aff}(\operatorname*{dom}f).
\label{gf}%
\end{equation}
In particular, if $\operatorname*{dom}f$ is finite-dimensional, then
(\ref{gf00}) also holds with $\operatorname*{cl}^{s}$ instead of
$\operatorname*{cl}.$
\end{lem}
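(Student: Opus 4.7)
\emph{Plan.} The plan is to split the proof into three stages: (i) refine the net and apply the Fenchel--Young inequality to extract approximate active indices and $\varepsilon$-subgradients; (ii) exploit the local boundedness of $f$ on $\operatorname*{ri}(\operatorname*{dom}f)$, combined with (\ref{cd2}), to get uniform bounds on the restrictions $z_i^*|_{V_0}$ and invoke Banach--Alaoglu; (iii) extend the resulting limit by Hahn--Banach to some $z^*\in X^*$ and verify (\ref{gf00}) through a neighborhood argument. After a harmless translation I assume $x_0=\theta$ with $x_0\in\operatorname*{ri}(\operatorname*{dom}f)$, so $V_0:=\operatorname*{aff}(\operatorname*{dom}f)$ is a closed linear subspace of $X$ through the origin and $f\leq M$ on some balanced $\theta$-neighborhood $W_0\subset V_0\cap\operatorname*{dom}f$.

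\emph{Stage 1: approximate subgradients.} Set $a_i:=\inf_{t\in T}f_t^*(z_i^*)$ and the affine minorant $\ell_i(z):=\langle z_i^*,z\rangle-a_i\leq f(z)$. Through a standard refinement of the index set (pairing each $i$ with a vanishing tolerance), I can select $t_i\in T$ with $f_{t_i}^*(z_i^*)\leq a_i+\eta_i$ and $\eta_i\to 0$. The Fenchel--Young gap at $x$ satisfies
\[
0\leq f_{t_i}(x)+f_{t_i}^*(z_i^*)-\langle z_i^*,x\rangle\leq f(x)-\ell_i(x)+\eta_i,
\]
which tends to $0$ by (\ref{cd1}); hence $f_{t_i}(x)\to f(x)$ and $z_i^*\in\partial_{\varepsilon_i}f_{t_i}(x)$ with $\varepsilon_i\to 0$, so that for every $\varepsilon>0$, eventually $z_i^*\in A_\varepsilon:=\bigcup_{t\in T_\varepsilon(x)}\partial_\varepsilon f_t(x)$.

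\emph{Stage 2: subnet extraction.} The equality $\ell_i(\theta)=-a_i$, combined with $\ell_i(\theta)\leq f(\theta)\leq M$, gives $a_i\geq-M$; at the same time, (\ref{cd2}) at $z=\theta$ yields $\limsup_i(-a_i)>-\infty$, so along a subnet $a_i\leq K$ for some constant $K$. For this subnet and every $v\in W_0$, the inequalities $\ell_i(\pm v)\leq M$ translate into $|\langle z_i^*,v\rangle|\leq M+K$, placing $z_i^*|_{V_0}$ in the $w^*$-compact polar $((M+K)^{-1}W_0)^{\circ}\subset V_0^*$. By Banach--Alaoglu, a further subnet $(z_{i_j}^*)$ satisfies $\langle z_{i_j}^*,v\rangle\to y^*(v)$ for all $v\in V_0$ with $y^*\in V_0^*$, and a Hahn--Banach extension yields $z^*\in X^*$ realizing (\ref{gf}). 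I expect this stage to be the main technical obstacle: transferring the single-point control from (\ref{cd2}) into uniform boundedness of the net on $W_0$ crucially relies on the reduction $x_0=\theta$, which makes $a_i$ directly accessible through the value of $\ell_i$ at the origin.

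\emph{Stage 3: closure membership and finite-dimensional case.} For (\ref{gf00}), fix $\varepsilon>0$ and a basic $w^*$-neighborhood $\{w^*:|\langle w^*-z^*,z_k\rangle|<\delta,\ k=1,\ldots,n\}$ of $z^*$. Decomposing each $z_k=v_k+u_k$ with $v_k\in V_0$ and $u_k$ in an algebraic complement of $V_0$ inside $V_0+\operatorname*{span}\{z_1,\ldots,z_n\}$, I define $r_j^*$ on $V_0+\operatorname*{span}\{u_1,\ldots,u_n\}$ by vanishing on $V_0$ and $\langle r_j^*,u_k\rangle=\langle z^*-z_{i_j}^*,u_k\rangle$, then extend to $r_j^*\in V_0^\perp\subset X^*$ by Hahn--Banach; for $j$ large, $z_{i_j}^*+r_j^*\in A_\varepsilon+V_0^\perp$ lies in the chosen neighborhood, giving $z^*\in\operatorname*{cl}(A_\varepsilon+V_0^\perp)$. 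When $\operatorname*{dom}f$ is finite-dimensional, $V_0$ admits a continuous topological complement in $X$, so $r_j^*:=(z^*-z_{i_j}^*)\circ(I-\pi_{V_0})$ is globally defined; since $\pi_{V_0}(B)$ is bounded in $V_0$ for every bounded $B\subset X$ while $w^*$- and norm-convergence agree on the finite-dimensional $V_0^*$, the approximation becomes uniform on $B$, delivering the strong-topology closure.
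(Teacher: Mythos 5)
Your proof is correct and follows essentially the same route as the paper: Fenchel--Young at $x$ to place the $z_i^*$ in $\bigcup_{t\in T_\varepsilon(x)}\partial_\varepsilon f_t(x)$, local boundedness of $f$ on $\operatorname*{ri}(\operatorname*{dom}f)$ together with (\ref{cd2}) to trap the restrictions $z_i^*|_{\operatorname*{aff}(\operatorname*{dom}f)}$ in a weak*-compact polar, Alaoglu plus Hahn--Banach to produce $z^*$, and a correction by elements of $(\operatorname*{aff}(\operatorname*{dom}f))^{\perp}$ to get the closure membership. The only differences are cosmetic (you normalize the continuity point rather than $x$ to the origin, and you build the correcting functionals explicitly where the paper invokes the identification $E^{\ast}\cong X^{\ast}/E^{\perp}$).
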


\begin{proof}
We may assume that $x=\theta$ and $f(\theta)=0,$ and\ denote
$E:=\operatorname*{aff}(\operatorname*{dom}f)$ which is a closed subspace
with\ dual $E^{\ast}.$ We also denote $h:=\inf_{t\in T}f_{t}^{\ast},$ so that
(see (\ref{moreau}))%
\begin{equation}
h^{\ast}=(\inf_{t\in T}f_{t}^{\ast})^{\ast}=\sup_{t\in T}f_{t}^{\ast\ast}%
=\sup_{t\in T}f_{t}=f, \label{esea}%
\end{equation}
and
\begin{equation}
h^{\ast}(\theta)+h(z_{i}^{\ast})=f(\theta)+h(z_{i}^{\ast})=h(z_{i}^{\ast
})\rightarrow0. \label{irre}%
\end{equation}
Hence, for every fixed $\varepsilon>0,$ there is some $i_{0}\in I$ such
that\ for all $i\succeq i_{0}$
\begin{equation}
h^{\ast}(\theta)+h(z_{i}^{\ast})=\sup_{t\in T}f_{t}(\theta)+\inf_{t\in T}%
f_{t}^{\ast}(z_{i}^{\ast})=h(z_{i}^{\ast})<\varepsilon, \label{ese}%
\end{equation}
and so\
\begin{equation}
(z_{i}^{\ast})_{i\succeq i_{0}}\subset\partial_{\varepsilon}h^{\ast}%
(\theta)=\partial_{\varepsilon}f(\theta). \label{hy}%
\end{equation}
Now, using the continuity assumption, we choose $x_{0}\in\operatorname*{dom}%
f,$ a $\theta$-neighborhood $U\subset X$ and $r\geq0$ such that
\begin{equation}
f(x_{0}+y)\leq r\text{ \ for all }y\in U\cap E, \label{cones}%
\end{equation}
and, by (\ref{cd2}) with\ $z=x_{0}$ and (\ref{irre}),
\[
\limsup_{i}\left\langle z_{i}^{\ast},x_{0}\right\rangle >-\infty.
\]
Therefore we may assume, up to some subnet, that $\inf_{i}\left\langle
z_{i}^{\ast},x_{0}\right\rangle >-\infty$ and, so, by (\ref{hy}) and
(\ref{cones}), there is some $m>0$ such that%
\begin{equation}
\left\langle z_{i}^{\ast},y\right\rangle \leq f(x_{0}+y)+\varepsilon-\inf
_{i}\left\langle z_{i}^{\ast},x_{0}\right\rangle \leq m,\text{ for all }y\in
U\cap E\text{ and for all }i; \label{zr}%
\end{equation}
that is $(z_{i}^{\ast})_{i}\subset(U\cap E)^{\circ}.$ Since the last set is
weak*-compact in $E^{\ast},$ by the Alaoglu-Banach-Bourbaki theorem, there
exists a subnet $(z_{i_{j}\mid E}^{\ast})_{j}$ and $\tilde{z}^{\ast}\in
E^{\ast}$ such that
\begin{equation}
\left\langle z_{i_{j}\mid E}^{\ast}-\tilde{z}^{\ast},u\right\rangle
\rightarrow_{j}0\text{ \ for all }u\in E. \label{coc0}%
\end{equation}
Moreover, by the Hahn-Banach theorem, $\tilde{z}^{\ast}\in E^{\ast}$ is extended
to some $z^{\ast}\in X^{\ast},$ which satisfies
\begin{equation}
\left\langle z_{i_{j}}^{\ast}-z^{\ast},u\right\rangle =\left\langle
z_{i_{j}\mid E}^{\ast}-\tilde{z}^{\ast},u\right\rangle \rightarrow_{j}0\text{
\ for all }u\in E. \label{coc}%
\end{equation}
\ Now, using (\ref{ese}), we see that for each $i$ there exists $t_{i}\in T$ such that\
\[
f_{t_{i}}(\theta)+f_{t_{i}}^{\ast}(z_{i}^{\ast})\leq f_{t_{i}}^{\ast}%
(z_{i}^{\ast})<\varepsilon,
\]
entailing\ that $z_{i}^{\ast}\in\partial_{\varepsilon}f_{t_{i}}(\theta)$ and
\[
-f_{t_{i}}(\theta)=\left\langle z_{i}^{\ast},\theta\right\rangle -f_{t_{i}%
}(\theta)\leq f_{t_{i}}^{\ast}(z_{i}^{\ast})<\varepsilon;
\]
that is, $t_{i}\in T_{\varepsilon}(\theta)$ and so, 
$$
z_{i}^{\ast}\in
\bigcup\nolimits_{t\in T_{\varepsilon}(\theta)}\partial_{\varepsilon}%
f_{t}(\theta).
$$
We fix a\ weak* (strong, when $\operatorname*{dom}f$ is
finite-dimensional)\ $\theta$-neighborhood $V\subset X^{\ast}.$ Since
$E^{\ast}\ $is isomorphic to the quotient space $X_{\diagup E^{\perp}}^{\ast
},$ then $V_{\mid E}:=\left\{  u_{\mid E}^{\ast}:u^{\ast}\in V\right\}
\in\mathcal{N}_{E^{\ast}}$ (\cite{Fa01}), where $u_{\mid E}^{\ast}$ denotes the restriction of $u^{\ast}$ to $E^{\ast}$. 
Consequently, writing
\[
z_{i_{j}\mid E}^{\ast}\in A:=\left\{  u_{\mid E}^{\ast}\in E^{\ast}:u^{\ast
}\in\bigcup\nolimits_{t\in T_{\varepsilon}(\theta)}\partial_{\varepsilon}%
f_{t}(\theta)\right\}  ,
\]
and\ passing to the limit on $j,$ (\ref{coc}) leads us to%
\begin{equation}
z_{\mid E}^{\ast}\in A+V_{\mid E}. \label{pl}%
\end{equation}
In other words, there are $u^{\ast}\in\bigcup\nolimits_{t\in T_{\varepsilon
}(\theta)}\partial_{\varepsilon}f_{t}(\theta)$ and $v^{\ast}\in V$ such that
$z_{\mid E}^{\ast}=u_{\mid E}^{\ast}+v_{\mid E}^{\ast};$ that is,
\[
\left\langle z^{\ast},u\right\rangle =\left\langle u^{\ast}+v^{\ast
},u\right\rangle \text{ \ for all }u\in E,
\]
implying\ that
\[
z^{\ast}\in u^{\ast}+v^{\ast}+E^{\perp}\subset\bigcup\nolimits_{t\in
T_{\varepsilon}(\theta)}\partial_{\varepsilon}f_{t}(\theta)+E^{\perp}+V.
\]
The conclusion follows then by intersecting over $V$ and, after, over
$\varepsilon>0.$
\end{proof}

In the currrent framework, $\widehat{X^{\ast}}$ is the Stone-\v{C}ech
compactification of $X^{\ast}$, with respect to\ the discrete topology,
and\ the mappings $\gamma_{z^{\ast}}:\left[  0,1\right]  ^{X^{\ast}%
}\rightarrow\left[  0,1\right]  ,$ $z^{\ast}\in X^{\ast},$ are defined as in
(\ref{eval}), so that the convergence $\gamma_{z_{i}^{\ast}}\rightarrow\gamma$
for for a net $(z_{i}^{\ast})_{i}\subset X^{\ast}$ and $\gamma\in
\widehat{X^{\ast}}$ means%
\[
\varphi(z_{i}^{\ast})\rightarrow\gamma(\varphi)\text{ \ \ for all }\varphi
\in\left[  0,1\right]  ^{X^{\ast}}.
\]

\begin{lem}
\label{lemtec}Assume in Lemma \ref{lemteco} that the net $(\gamma_{z_{i}%
^{\ast}})_{i}$ converges in $\widehat{X^{\ast}}.$ Then for the function\
\[
\psi(z):=\limsup_{i}\left(\left\langle z_{i}^{\ast},z\right\rangle -\inf_{t\in
T}f_{t}^{\ast}(z_{i}^{\ast})+\mathrm{I}_{\operatorname*{dom}f}(z)\right),\text{
}z\in X,
\]
we have\
\begin{align*}
\partial\psi(x)  &  \subset\mathrm{N}_{\operatorname*{dom}f}(x)+\bigcap
\nolimits_{\varepsilon>0}\operatorname*{cl}\left(  \bigcup\nolimits_{t\in
T_{\varepsilon}(x)}\partial_{\varepsilon}f_{t}(x)+(\operatorname*{aff}%
(\operatorname*{dom}f))^{\perp}\right) \\
&  \subset\bigcap\nolimits_{\varepsilon>0}\operatorname*{cl}\left(
\bigcup\nolimits_{t\in T_{\varepsilon}(\theta)}\partial_{\varepsilon}%
f_{t}(\theta)+\mathrm{N}_{\operatorname*{dom}f}(\theta)\right),
\end{align*}
with $\operatorname*{cl}^{s}$ instead of $\operatorname*{cl}$ when
$\operatorname*{dom}f$ is finite-dimensional.
\end{lem}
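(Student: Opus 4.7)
The plan is to leverage Lemma \ref{lemteco} to produce a candidate $z^{\ast}\in X^{\ast}$ capturing the limiting subgradient structure, and then to exploit the additional hypothesis $\gamma_{z_{i}^{\ast}}\to\gamma$ in $\widehat{X^{\ast}}$ to upgrade the subnet convergence delivered by Lemma \ref{lemteco} to convergence of the \emph{entire} net on $E:=\operatorname*{aff}(\operatorname*{dom}f)$. After normalizing $x=\theta$ and $f(\theta)=0$, I note that $h(z_{i}^{\ast}):=\inf_{t\in T}f_{t}^{\ast}(z_{i}^{\ast})\to 0$ as in (\ref{irre}), so $\psi(\theta)=0$ and $\psi$ is a proper convex function. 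Lemma \ref{lemteco} then yields a subnet $(z_{i_{j}}^{\ast})$ and some $z^{\ast}\in X^{\ast}$ with $\langle z_{i_{j}}^{\ast}-z^{\ast},z\rangle\to 0$ for every $z\in E$, together with
\[
z^{\ast}\in B_{\varepsilon}:=\operatorname*{cl}\Bigl(\bigcup\nolimits_{t\in T_{\varepsilon}(\theta)}\partial_{\varepsilon}f_{t}(\theta)+E^{\perp}\Bigr)\quad\text{for every }\varepsilon>0.
\]

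The main new ingredient is to promote the above subnet convergence to full-net convergence on $E$. To this end I fix a strictly increasing homeomorphism $\sigma:\mathbb{R}\to(0,1)$, extended by $\sigma(\pm\infty)\in\{0,1\}$, and define $\varphi_{z}(u^{\ast}):=\sigma(\langle u^{\ast},z\rangle)$, which belongs to $[0,1]^{X^{\ast}}$. Since $X^{\ast}$ carries the discrete topology here, the hypothesis $\gamma_{z_{i}^{\ast}}\to\gamma$ in $\widehat{X^{\ast}}$ forces $\sigma(\langle z_{i}^{\ast},z\rangle)\to\gamma(\varphi_{z})=:s_{z}\in[0,1]$. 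Evaluating along the subnet from Lemma \ref{lemteco} gives $\sigma(\langle z_{i_{j}}^{\ast},z\rangle)\to\sigma(\langle z^{\ast},z\rangle)\in(0,1)$, identifying $s_{z}=\sigma(\langle z^{\ast},z\rangle)\in(0,1)$; since $\sigma^{-1}$ is continuous on $(0,1)$, one concludes $\langle z_{i}^{\ast},z\rangle\to\langle z^{\ast},z\rangle$ for the entire net and for every $z\in E$.

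With full-net convergence in hand, the first inclusion is immediate: for any $x^{\ast}\in\partial\psi(\theta)$ and $z\in\operatorname*{dom}f\subset E$, using $h(z_{i}^{\ast})\to 0$,
\[
\langle x^{\ast},z\rangle\leq\psi(z)=\limsup\nolimits_{i}\bigl(\langle z_{i}^{\ast},z\rangle-h(z_{i}^{\ast})\bigr)=\lim\nolimits_{i}\langle z_{i}^{\ast},z\rangle=\langle z^{\ast},z\rangle,
\]
so $x^{\ast}-z^{\ast}\in\mathrm{N}_{\operatorname*{dom}f}(\theta)$ and consequently $x^{\ast}\in\mathrm{N}_{\operatorname*{dom}f}(\theta)+\bigcap\nolimits_{\varepsilon>0}B_{\varepsilon}$. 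For the second inclusion, the observation $E^{\perp}\subset\mathrm{N}_{\operatorname*{dom}f}(\theta)$ absorbs the $E^{\perp}$-summand into the normal cone, and the routine fact $N+\operatorname*{cl}(C)\subset\operatorname*{cl}(N+C)$ delivers the required form. The finite-dimensional refinement with $\operatorname*{cl}^{s}$ carries over from Lemma \ref{lemteco}, since the quotient $X^{\ast}/E^{\perp}\simeq E^{\ast}$ is then finite-dimensional and its weak$^{\ast}$ and strong topologies coincide. I expect the main obstacle to be precisely the subnet-to-full-net upgrade via the $\widehat{X^{\ast}}$ assumption; once that is secured, the remaining manipulations with subgradient inequalities and normal cones are routine.
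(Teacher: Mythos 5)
Your proof is correct and follows the same overall strategy as the paper: extract the subnet limit $z^{\ast}$ from Lemma \ref{lemteco}, exploit the fact that every $[0,1]$-valued function on the discrete space $X^{\ast}$ is continuous so that convergence of $(\gamma_{z_{i}^{\ast}})_{i}$ in $\widehat{X^{\ast}}$ forces the whole net to inherit the subnet's limiting behaviour, and conclude $\partial\psi(\theta)\subset z^{\ast}+\mathrm{N}_{\operatorname*{dom}f}(\theta)$. You differ in two local choices, both of which streamline the argument. First, your test functions $\varphi_{z}=\sigma(\langle\cdot,z\rangle)$, with $\sigma$ a homeomorphism of $[-\infty,+\infty]$ onto $[0,1]$, identify the full-net limit of $\langle z_{i}^{\ast},z\rangle$ exactly as $\langle z^{\ast},z\rangle$ for every $z\in E$; the paper instead uses the truncated, normalized functions $\varphi_{z}(u^{\ast})=(g_{u^{\ast}}(z)+1)/\max\{f(z)+1,1\}$ with $g_{u^{\ast}}=\max\{u^{\ast}-h(u^{\ast}),-1\}$, which are only defined for $z\in\operatorname*{dom}f$ and only pin down the limit of $\max\{\langle z_{i}^{\ast},z\rangle-h(z_{i}^{\ast}),-1\}$. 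Second, because your identification is exact, $\psi$ coincides on $\operatorname*{dom}f$ with the affine function $\langle z^{\ast},\cdot\rangle$, so the subgradient inequality at $\theta$ gives $x^{\ast}-z^{\ast}\in\mathrm{N}_{\operatorname*{dom}f}(\theta)$ directly, whereas the paper must compare $\psi$ with $\max\{z^{\ast}+\mathrm{I}_{\operatorname*{dom}f},-1\}$ (two functions agreeing at $\theta$) and then invoke formula (\ref{mainformulaalternative}) for the finite family $\{z^{\ast}+\mathrm{I}_{\operatorname*{dom}f},-1\}$ to reach the same containment. The closing manipulations (absorbing $E^{\perp}$ into the normal cone, $\mathrm{N}+\operatorname*{cl}C\subset\operatorname*{cl}(\mathrm{N}+C)$, and carrying over the $\operatorname*{cl}^{s}$ refinement when $\operatorname*{dom}f$ is finite-dimensional) agree with the paper's.
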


\begin{proof}
We may suppose that $x=\theta$ and $f(\theta)=0.$ By Lemma \ref{lemteco}
there\ exist a subnet $(z_{i_{j}}^{\ast})_{j}$ of $(z_{i}^{\ast})_{i}$ and
\[
z^{\ast}\in\bigcap\nolimits_{\varepsilon>0}\operatorname*{cl}\left(
\bigcup\nolimits_{t\in T_{\varepsilon}(x)}\partial_{\varepsilon}%
f_{t}(x)+(\operatorname*{aff}(\operatorname*{dom}f))^{\perp}\right)
\]
such that $(z_{i_{j}}^{\ast})_{j}$ weak*-converges to $z^{\ast}$ in $E^{\ast}$
(where $E=\operatorname*{aff}(\operatorname*{dom}f)).$

We\ introduce the functions $g_{u^{\ast}}:X\rightarrow\mathbb{R}_{\infty},$
$u^{\ast}\in X^{\ast},$ defined as
\[
g_{u^{\ast}}:=\max\left\{  u^{\ast}-h(u^{\ast}),-1\right\}  ,
\]
where $h=\inf_{t\in T}f_{t}^{\ast}$ (already used in the proof of\ Lemma
\ref{lemteco}). Observe that (recall (\ref{esea}))
$$-1\leq g_{u^{\ast}}\leq\max\left\{  h^{\ast
},-1\right\}  =\max\left\{  f,-1\right\},
$$  and
\[
\varphi_{z}(u^{\ast}):=\frac{g_{u^{\ast}}(z)+1}{\max\left\{  f(z)+1,1\right\}
}\in\left[  0,1\right],  \text{ for all }z\in\operatorname*{dom}f.
\]
Hence, since $\varphi_{z}$ is obviously continuous on $X^{\ast}$ endowed with
the discrete topology, the convergence assumption of\ $(\gamma_{z_{i}^{\ast}%
})_{i}$ ensures that, for each $z\in\operatorname*{dom}f,$ the net
\[
\gamma_{z_{i}^{\ast}}(\varphi_{z})=\frac{g_{z_{i}^{\ast}}(z)+1}{\max\left\{
f(z)+1,1\right\}  }%
\]
also converges, as well as the net $(g_{z_{i}^{\ast}}(z))_{i}$. Then, taking into
account (\ref{cd1}) and (\ref{gf}), we obtain
\begin{align*}
\lim_{i}g_{z_{i}^{\ast}}(z)  &  =\lim_{i}\max\left\{  \left\langle z_{i}%
^{\ast},z\right\rangle -h(z_{i}^{\ast}),-1\right\} \\
&  =\lim_{j}\max\left\{  \left\langle z_{i_{j}}^{\ast},z\right\rangle
,-1\right\}  =\max\left\{  \left\langle z^{\ast},z\right\rangle ,-1\right\}  ,
\end{align*}
which gives
\[
\limsup_{i}\left\langle z_{i}^{\ast},z\right\rangle \leq\limsup_{i}%
(\max\{\left\langle z_{i}^{\ast},z\right\rangle ,-1\})=\max\{\left\langle
z^{\ast},z\right\rangle ,-1\}.
\]
But\ both functions $\limsup_{i}z_{i}^{\ast}+\mathrm{I}_{\operatorname*{dom}%
f}$ and $\max\{z^{\ast},-1\}+\mathrm{I}_{\operatorname*{dom}f}$ coincide at
$\theta,$ and so
\[
\partial\left(\limsup_{i}(z_{i}^{\ast}+\mathrm{I}_{\operatorname*{dom}f}%
)\right)(\theta)\subset\partial(\max\{z^{\ast}+\mathrm{I}_{\operatorname*{dom}%
f},-1\})(\theta),
\]
and (\ref{mainformulaalternative}) applied to the (finite) family $\{z^{\ast
}+\mathrm{I}_{\operatorname*{dom}f},-1\}$ yields (recall (\ref{cd1}))
\begin{align*}
\partial\psi(\theta)  &  =\partial\left(\limsup_{i}(z_{i}^{\ast}+\mathrm{I}%
_{\operatorname*{dom}f})\right)(\theta)\\
&  \subset z^{\ast}+\mathrm{N}_{\operatorname*{dom}f}(\theta).\\
&  \subset\mathrm{N}_{\operatorname*{dom}f}(\theta)+\bigcap
\nolimits_{\varepsilon>0}\operatorname*{cl}\left(  \bigcup\nolimits_{t\in
T_{\varepsilon}(\theta)}\partial_{\varepsilon}f_{t}(\theta
)+(\operatorname*{aff}(\operatorname*{dom}f))^{\perp}\right) \\
&  \subset\bigcap\nolimits_{\varepsilon>0}\operatorname*{cl}\left(
\bigcup\nolimits_{t\in T_{\varepsilon}(\theta)}\partial_{\varepsilon}%
f_{t}(\theta)+\mathrm{N}_{\operatorname*{dom}f}(\theta)\right)  .
\end{align*}

\end{proof}

\begin{theo}
\label{hamdkathirbisb}Let $f_{t}:X\rightarrow\overline{\mathbb{R}},$ $t\in T,$
be\ convex functions and $f=\sup_{t\in T}f_{t}.$ Then, for every $x\in X,$\
\begin{equation}
\partial f(x)=\bigcap\nolimits_{L\in\mathcal{F}(x)}\operatorname*{co}\left\{
\bigcap\nolimits_{\varepsilon>0}\operatorname*{cl}\nolimits^{s}\left(
\bigcup\nolimits_{t\in T_{\varepsilon}(x)}\partial_{\varepsilon}%
(f_{t}+\mathrm{I}_{L\cap\operatorname*{dom}f})(x)\right)  \right\}  .
\label{f1}%
\end{equation}
If, in addition,
\begin{equation}
\operatorname*{cl}f=\sup_{t\in T}(\operatorname*{cl}f_{t}), \label{closure}%
\end{equation}
then
\begin{equation}
\partial f(x)=\bigcap\nolimits_{L\in\mathcal{F}(x)}\operatorname*{co}\left\{
\bigcap\nolimits_{\varepsilon>0}\operatorname*{cl}\left(  \bigcup
\nolimits_{t\in T_{\varepsilon}(x)}\partial_{\varepsilon}f_{t}(x)+\mathrm{N}%
_{L\cap\operatorname*{dom}f}(x)\right)  \right\}  . \label{f1b}%
\end{equation}

\end{theo}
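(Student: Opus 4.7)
The plan is to prove (\ref{f1}) by two separate inclusions, and then deduce (\ref{f1b}) from (\ref{f1}) under the additional assumption (\ref{closure}). Throughout, abbreviate
\[
A_{\varepsilon,L}:=\bigcup\nolimits_{t\in T_{\varepsilon}(x)}\partial_{\varepsilon}(f_{t}+\mathrm{I}_{L\cap\operatorname*{dom}f})(x).
\]
For the inclusion ``$\supset$'' in (\ref{f1}), since the strong topology on $X^{\ast}$ is finer than the weak$^{\ast}$-topology, every weak$^{\ast}$-closed set is strongly closed, whence $\operatorname*{cl}\nolimits^{s}(A_{\varepsilon,L})\subset\overline{\operatorname*{co}}(A_{\varepsilon,L})$. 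Hence
\[
\operatorname*{co}\bigl\{\bigcap\nolimits_{\varepsilon>0}\operatorname*{cl}\nolimits^{s}(A_{\varepsilon,L})\bigr\}\subset\bigcap\nolimits_{\varepsilon>0}\overline{\operatorname*{co}}(A_{\varepsilon,L}),
\]
and intersecting over $L\in\mathcal{F}(x)$ shows that the right-hand side of (\ref{f1}) sits inside $\partial f(x)$ by (\ref{fe2}).

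For the opposite inclusion in (\ref{f1}), I would start from Theorem \ref{hamdkathir}, which reduces the problem to showing that, for each $L\in\mathcal{F}(x)$ and each $\gamma\in\widehat{T}(x)$,
\[
\partial(f_{\gamma}+\mathrm{I}_{L\cap\operatorname*{dom}f})(x)\subset\bigcap\nolimits_{\varepsilon>0}\operatorname*{cl}\nolimits^{s}(A_{\varepsilon,L}).
\]
This is where Lemma \ref{lemtec} enters, but applied to the \emph{restricted} family $g_{t}:=f_{t}+\mathrm{I}_{L\cap\operatorname*{dom}f}$, whose supremum $g$ has the finite-dimensional effective domain $L\cap\operatorname*{dom}f$; continuity on $\operatorname*{ri}(L\cap\operatorname*{dom}f)$ is automatic, and the strong closure $\operatorname*{cl}\nolimits^{s}$ from the finite-dimensional refinement of the lemma becomes available. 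Given $z^{\ast}$ in the left-hand side, I would use a selection net $(t_{i}^{\gamma})\subset T$ as in (\ref{selection}) and construct accompanying $z_{i}^{\ast}\in\partial_{\varepsilon_{i}}g_{t_{i}^{\gamma}}(x)$ with $\varepsilon_{i}\downarrow 0$ that linearize $z^{\ast}$ along $L\cap\operatorname*{dom}f$ through the limsup definition (\ref{fgam}) of $f_{\gamma}$. After extracting a subnet so that $(\gamma_{z_{i}^{\ast}})_{i}$ converges in $\widehat{X^{\ast}}$, Lemmas \ref{lemteco}--\ref{lemtec} then put $z^{\ast}$ into $\bigcap_{\varepsilon>0}\operatorname*{cl}\nolimits^{s}(A_{\varepsilon,L})$. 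Formula (\ref{f1b}) is obtained by the same scheme under (\ref{closure}): the ``$\supset$'' part is identical to that of (\ref{f1}) but based on (\ref{fe1}) of Proposition \ref{p1} instead of (\ref{fe2}); for ``$\subset$'', condition (\ref{closure}) permits replacing each $f_{t}$ by its lsc hull $\operatorname*{cl}f_{t}$ without altering $\partial f(x)$, so the properness/lsc hypothesis of Lemma \ref{lemtec} now holds for the original (non-augmented) family, and the second inclusion in the conclusion of Lemma \ref{lemtec} combined with the finite-dimensionality of $L\cap\operatorname*{dom}f$ yields directly $\bigcap_{\varepsilon>0}\operatorname*{cl}(\bigcup_{t\in T_{\varepsilon}(x)}\partial_{\varepsilon}f_{t}(x)+\mathrm{N}_{L\cap\operatorname*{dom}f}(x))$.

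The hard part will be the net construction in the second inclusion of (\ref{f1}): starting from an exact subgradient of the regularized function $f_{\gamma}+\mathrm{I}_{L\cap\operatorname*{dom}f}$, one must produce associated $\varepsilon_{i}$-subgradients $z_{i}^{\ast}\in\partial_{\varepsilon_{i}}g_{t_{i}^{\gamma}}(x)$ whose Fenchel values simultaneously realize the asymptotic equality (\ref{cd1}) and the pointwise lower bounds (\ref{cd2}) that Lemma \ref{lemteco} demands. The interplay between the limsup defining $f_{\gamma}$ and the selection (\ref{selection}) --- which forces $f_{t_{i}^{\gamma}}(x)\to f(x)$ --- is what ties $\inf_{t}g_{t}^{\ast}(z_{i}^{\ast})$ to $\langle z_{i}^{\ast},x\rangle-g(x)$ up to the vanishing tolerance $\varepsilon_{i}$, and hence what makes the passage from the compactified picture to the $\varepsilon$-subdifferential picture actually work.
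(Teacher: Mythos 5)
Your ``$\supset$'' inclusions and the passage between (\ref{f1}) and (\ref{f1b}) are fine, but the core of the ``$\subset$'' direction of (\ref{f1}) is missing. You propose to start from Theorem \ref{hamdkathir} and to prove, for each $\gamma\in\widehat{T}(x)$ and $L\in\mathcal{F}(x)$, that $\partial(f_{\gamma}+\mathrm{I}_{L\cap\operatorname*{dom}f})(x)$ is contained in $\bigcap_{\varepsilon>0}\operatorname*{cl}^{s}\bigl(\bigcup_{t\in T_{\varepsilon}(x)}\partial_{\varepsilon}(f_{t}+\mathrm{I}_{L\cap\operatorname*{dom}f})(x)\bigr)$ by ``constructing accompanying'' $z_{i}^{\ast}\in\partial_{\varepsilon_{i}}(f_{t_{i}^{\gamma}}+\mathrm{I}_{L\cap\operatorname*{dom}f})(x)$ that converge to a prescribed $z^{\ast}\in\partial(f_{\gamma}+\mathrm{I}_{L\cap\operatorname*{dom}f})(x)$. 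No mechanism for this construction is offered, and the limsup definition (\ref{fgam}) does not supply one: an affine minorant of $f_{\gamma}$ that is exact at $x$ need not be an approximate minorant of any single $f_{t_{i}}$ along the net, because the indices $t$ for which $f_{t}(z)$ nearly attains $f_{\gamma}(z)$ vary with $z$. Producing such approximating $\varepsilon$-subgradients is precisely the content of the theorem, so the argument is circular at its key step. Note also that Lemmas \ref{lemteco}--\ref{lemtec} run in the opposite direction to the one you need: they take as input a net $(z_{i}^{\ast})_{i}$ satisfying (\ref{cd1})--(\ref{cd2}) and produce its limit $z^{\ast}$; they do not manufacture such a net from a given subgradient of a regularized function.

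What makes the paper's proof work, and what your outline never invokes, is the biconjugate representation: with $\tilde{f}_{t}:=f_{t}+\mathrm{I}_{L}$ and $h:=\inf_{t}\tilde{f}_{t}^{\ast}$ one has $f+\mathrm{I}_{L}=h^{\ast}=\sup_{z^{\ast}\in X^{\ast}}(\langle\cdot,z^{\ast}\rangle-h(z^{\ast}))$, and the compactification formula (\ref{mainformulaalternatived}) is applied to this family of \emph{affine} functions indexed by $X^{\ast}$, not to the original family indexed by $T$. For that family, $\varepsilon$-activity of an index $z^{\ast}$ at $x$ reads $h(z^{\ast})\leq\langle z^{\ast},x\rangle-f(x)+\varepsilon$, which by the definition of $h$ produces some $t\in T_{\varepsilon}(x)$ with $z^{\ast}\in\partial_{\varepsilon}\tilde{f}_{t}(x)$; this is the step that converts active indices into $\varepsilon$-subgradients and is exactly what Lemma \ref{lemteco} exploits. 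In that picture the regularized functions are affine plus an indicator, so each piece of the decomposition is a singleton plus a normal cone, which is also why the convex hull can be pulled outside the intersection over $\varepsilon$. Your coarser decomposition into the sets $\partial(f_{\gamma}+\mathrm{I}_{L\cap\operatorname*{dom}f})(x)$ leaves you with an intermediate claim that is both stronger than needed and unproved. A secondary issue: Lemma \ref{lemteco} requires a proper lsc family, and your treatment of (\ref{f1b}) under (\ref{closure}) does not address indices $t$ for which $\operatorname*{cl}f_{t}$ is improper; the paper handles these with the truncation $\max\{\operatorname*{cl}f_{t},-1\}$.
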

\begin{rem}(before the proof) \emph{Formula\ (\ref{f1}) leads straightforwardly\ to the
following characterization of }$\partial f(x),$\emph{ using the strong
closure\ }%
\[
\partial f(x)=\bigcap\nolimits_{L\in\mathcal{F}(x),\varepsilon>0}%
\overline{\operatorname*{co}}^{s}\left\{  \bigcup\nolimits_{t\in
T_{\varepsilon}(x)}\partial_{\varepsilon}(f_{t}+\mathrm{I}_{L\cap
\operatorname*{dom}f})(x)\right\}  ,
\]
\emph{improving the one of Proposition \ref{p2}, which is given in terms of the
weak*-closure. However, on\ despite that both formulas involve similar
elements, the order in taking the intersection over }$\varepsilon$\emph{ leads
to different interpretations of }$\partial f(x).$\emph{ For instance, if
}$T$\emph{ is finite, }$T=T(x)$\emph{ and }$f$\emph{ is continuous, then
(\ref{f1}) reads\ }%
\[
\partial f(x)=\operatorname*{co}\left\{  \bigcup\nolimits_{t\in T(x)}\partial
f_{t}(x)\right\}  ,
\]
\emph{giving Valadier's formula (see, e.g., \cite{Va69}), while Proposition \ref{p2}
yields }%
\[
\partial f(x)=\bigcap\nolimits_{\varepsilon>0}\overline{\operatorname*{co}%
}\left\{  \bigcup\nolimits_{t\in T(x)}\partial_{\varepsilon}f_{t}(x)\right\}
,
\]
\emph{which turns out to be the Br{\o}ndsted formula (\cite{Br72}; see, also,
\cite[Corollary 12]{HLZ08}).}
\end{rem}
\begin{proof}
The inclusions \textquotedblleft$\supset$\textquotedblright\ in both formulas
are straightforward. We may suppose, without loss of generality, that $x=\theta,$ $f(\theta)=0$
and $\partial f(\theta)\neq\emptyset;$ hence,
\begin{equation}
\partial(\operatorname*{cl}f)(\theta)=\partial f(\theta)\text{ \ and
\ }f(\theta)=(\operatorname*{cl}f)(\theta)=0. \label{egz}%
\end{equation}
We proceed in three steps:\newline Step 1. We assume that\ all the $f_{t}$'s
are proper and lsc; hence, (\ref{closure}) obviously holds. We fix
$L\in\mathcal{F}(\theta),$ and define\ the functions
\begin{equation}
\tilde{f}_{t}:=f_{t}+\mathrm{I}_{L},\text{ }t\in T,\text{\ \ and \ \ }%
h:=\inf\nolimits_{t\in T}\tilde{f}_{t}^{\ast}. \label{hinf}%
\end{equation}
The $\tilde{f}_{t}$'s are proper and lsc, and we have (see (\ref{moreau}))
\begin{equation}
(f+\mathrm{I}_{L})(z)=\sup\nolimits_{t\in T}\tilde{f}_{t}(z)=\sup
\nolimits_{t\in T}\tilde{f}_{t}^{\ast\ast}(z)=(\inf\nolimits_{t\in T}\tilde
{f}_{t}^{\ast})^{\ast}(z)=h^{\ast}(z); \label{rw}%
\end{equation}
that is,
\[
(f+\mathrm{I}_{L})(z)=\sup\left\{  \left\langle z,z^{\ast}\right\rangle
-h(z^{\ast}),\text{ }z^{\ast}\in X^{\ast}\right\}  ,
\]
and\ (\ref{mainformulaalternatived}) applied with $T=X^{\ast}$ (endowed with
the discrete topology) yields
\begin{equation}
\partial(f+\mathrm{I}_{L})(\theta)\subset\operatorname*{co}\left\{
\bigcup\nolimits_{\gamma\in\widehat{X^{\ast}}(\theta)}\partial\left(\limsup
_{i}(z_{i}^{\ast^{\gamma}}-h(z_{i}^{\ast^{\gamma}})+\mathrm{I}_{L\cap
\operatorname*{dom}f})\right)(\theta)\right\}  , \label{no}%
\end{equation}
where $\widehat{X^{\ast}}(\theta)$ repesents the set $\widehat{T}(\theta)$
given in (\ref{dgam}); that is,
\[
\widehat{X^{\ast}}(\theta)=\left\{  \gamma\in\widehat{X^{\ast}}:
\limsup_{\gamma_{z^{\ast}}\rightarrow\gamma}(-h(z^{\ast}))=0\right\}  ,
\]
and $(z_{i}^{\ast^{\gamma}})_{i}\subset X^{\ast}$ is a fixed net such that
$\gamma_{z_{i}^{\ast^{\gamma}}}\rightarrow\gamma$ and $h(z_{i}^{\ast^{\gamma}%
})\rightarrow0$ (by (\ref{selection})). Consequenlty, for every $\gamma
\in\widehat{X^{\ast}}(\theta)$, Lemma \ref{lemtec} applies and yields
\begin{align}
\nonumber\partial\left(\limsup_{i}(z_{i}^{\ast^{\gamma}}-h(z_{i}^{\ast^{\gamma}}%
)+\mathrm{I}_{L\cap\operatorname*{dom}f})\right)(\theta)\qquad\qquad\qquad\qquad
\\
\subset
\bigcap
\nolimits_{\varepsilon>0}\operatorname*{cl}\nolimits^{s}\left(  \bigcup
\nolimits_{t\in T_{\varepsilon}^{1}(\theta)}\partial_{\varepsilon}\tilde
{f}_{t}(\theta)+\mathrm{N}_{L\cap\operatorname*{dom}f}(\theta)\right), \label{le}
\end{align}
where%
\begin{equation}
T_{\varepsilon}^{1}(\theta):=\left\{  t\in T:\tilde{f}_{t}(\theta
)\geq-\varepsilon\right\}  =T_{\varepsilon}(\theta). \label{tt}%
\end{equation}
Indeed, condition (\ref{cd2}) is satisfied when the left-hand side in
(\ref{le}) is nonempty, and thus the function $\limsup_{i}(z_{i}^{\ast
^{\gamma}}-h(z_{i}^{\ast^{\gamma}})+\mathrm{I}_{L\cap\operatorname*{dom}f})$
is proper. Consequently, combining (\ref{no}), (\ref{le}) and (\ref{tt}),
\begin{equation}
\partial(f+\mathrm{I}_{L})(\theta)\subset\operatorname*{co}\left\{
\bigcap\nolimits_{\varepsilon>0}\operatorname*{cl}\nolimits^{s}\left(
\bigcup\nolimits_{t\in T_{\varepsilon}(\theta)}\partial_{\varepsilon}\tilde
{f}_{t}(\theta)+\mathrm{N}_{L\cap\operatorname*{dom}f}(\theta)\right)
\right\}  , \label{non}%
\end{equation}
and the inclusion \textquotedblleft$\subset$\textquotedblright\ in (\ref{f1})
follows since 
$\partial f(\theta)\subset\partial(f+\mathrm{I}_{L})(\theta)$
and 
$$\partial_{\varepsilon}\tilde{f}_{t}(\theta)+\mathrm{N}_{L\cap
\operatorname*{dom}f}(\theta)\subset\partial_{\varepsilon}(f_{t}%
+\mathrm{I}_{L\cap\operatorname*{dom}f})(\theta).
$$
Moreover, due to\ the fact that $\partial_{\varepsilon}\tilde{f}_{t}%
(\theta)\subset\operatorname*{cl}(\partial_{\varepsilon}f_{t}(\theta
)+L^{\perp})$ (see, e.g., \cite{HP93}), (\ref{non}) implies that
\begin{align}
\partial f(\theta)  &  \subset\operatorname*{co}\left\{  \bigcap
\nolimits_{\varepsilon>0}\operatorname*{cl}\nolimits^{s}\left(  \bigcup
\nolimits_{t\in T_{\varepsilon}(\theta)}\operatorname*{cl}(\partial
_{\varepsilon}f_{t}(\theta)+L^{\perp})+\mathrm{N}_{L\cap\operatorname*{dom}%
f}(\theta)\right)  \right\} \nonumber\\
&  \subset\operatorname*{co}\left\{  \bigcap\nolimits_{\varepsilon
>0}\operatorname*{cl}\nolimits^{s}\left(  \operatorname*{cl}\left(
\bigcup\nolimits_{t\in T_{\varepsilon}(\theta)}\partial_{\varepsilon}%
f_{t}(\theta)+\mathrm{N}_{L\cap\operatorname*{dom}f}(\theta)\right)  \right)
\right\} \nonumber\\
&  =\operatorname*{co}\left\{  \bigcap\nolimits_{\varepsilon>0}%
\operatorname*{cl}\left(  \bigcup\nolimits_{t\in T_{\varepsilon}(\theta
)}\partial_{\varepsilon}f_{t}(\theta)+\mathrm{N}_{L\cap\operatorname*{dom}%
f}(\theta)\right)  \right\}  , \label{fs1}%
\end{align}
which yields the inclusion \textquotedblleft$\subset$\textquotedblright\ in
(\ref{f1b}).
\vspace{.3cm}
\newline Step 2. We suppose\ that (\ref{closure}) holds and we fix
$L\in\mathcal{F}(\theta).$ By (\ref{egz}) we choose a $\theta$-neighborhood
$U\subset X$ such that
\begin{equation}
f(z)\geq(\operatorname*{cl}f)(z)\geq-1,\text{ for all }z\in U, \label{lss}%
\end{equation}
and denote $S:=\left\{  t\in T:~\operatorname*{cl}f_{t}\text{ is
proper}\right\}  .$ We define the functions
\[
g_{t}:=\operatorname*{cl}f_{t},\text{ if }t\in S\text{, and }g_{t}%
:=\max\left\{  \operatorname*{cl}f_{t},-1\right\}  ,\text{ otherwise.}%
\]
Then (see the proof of\ \cite[Theorem 4]{HLZ08}, page 871) $g_{t}$ is proper, lsc and 
convex,%
\[
g(z):=\sup_{t\in T}g_{t}(z)=(\operatorname*{cl}f)(z),\ \text{for all }z\in U;
\]
hence, $g(\theta)=0,$%
\[
\left\{  t\in T:g_{t}(\theta)\geq-\varepsilon\right\}  \subset T_{\varepsilon
}(\theta)\cap S,\text{ }\forall\varepsilon\in\left]  0,1\right[  ,
\]%
\[
\partial_{\varepsilon}g_{t}(\theta)\subset\partial_{2\varepsilon}f_{t}%
(\theta),\text{ }\partial_{\varepsilon}(g_{t}+\mathrm{I}_{L\cap
\operatorname*{dom}f})(\theta)\subset\partial_{2\varepsilon}(f_{t}%
+\mathrm{I}_{L\cap\operatorname*{dom}f})(\theta),\text{ }\forall\varepsilon
\in\left]  0,1\right[  ,
\]
and\
\begin{equation}
\partial f(\theta)=\partial(\operatorname*{cl}f)(\theta)=\partial g(\theta).
\label{gh}%
\end{equation}
Consequently, by Step 1,%
\begin{align*}
\partial f(\theta)=\partial g(\theta)  &  =\bigcap\nolimits_{L\in
\mathcal{F}(\theta)}\operatorname*{co}\left\{  \bigcap\nolimits_{\varepsilon
>0}\operatorname*{cl}\nolimits^{s}\left(  \bigcup\nolimits_{t\in T,\text{
}g_{t}(\theta)\geq-\varepsilon}\partial_{\varepsilon}(g_{t}+\mathrm{I}%
_{L\cap\operatorname*{dom}g})(\theta)\right)  \right\} \\
&  \subset\bigcap\nolimits_{L\in\mathcal{F}(\theta)}\operatorname*{co}\left\{
\bigcap\nolimits_{\varepsilon>0}\operatorname*{cl}\nolimits^{s}\left(
\bigcup\nolimits_{t\in T_{\varepsilon}(\theta)}\partial_{2\varepsilon}%
(f_{t}+\mathrm{I}_{L\cap\operatorname*{dom}f})(\theta)\right)  \right\}  ,
\end{align*}
entailing the desired inclusion \textquotedblleft$\subset$%
\textquotedblright\ in (\ref{f1}). 

Similarly, (\ref{fs1}) yields
\begin{align}
\partial f(\theta)  &  =\bigcap\nolimits_{L\in\mathcal{F}(\theta
)}\operatorname*{co}\left\{  \bigcap\nolimits_{0<\varepsilon<1}%
\operatorname*{cl}\left(  \bigcup\nolimits_{t\in T,\text{ }g_{t}(\theta
)\geq-\varepsilon}\partial_{\varepsilon}g_{t}(\theta)+\mathrm{N}%
_{L\cap\operatorname*{dom}g}(\theta)\right)  \right\} \nonumber\\
&  \subset\bigcap\nolimits_{L\in\mathcal{F}(\theta)}\operatorname*{co}\left\{
\bigcap\nolimits_{0<\varepsilon<1}\operatorname*{cl}\left(  \bigcup
\nolimits_{t\in T_{\varepsilon}(\theta)}\partial_{2\varepsilon}f_{t}%
(\theta)+\mathrm{N}_{L\cap\operatorname*{dom}f}(\theta)\right)  \right\}  ,
\label{rec1}%
\end{align}
which easily leads to the inclusion \textquotedblleft$\subset$%
\textquotedblright\ in (\ref{f1b}).\newline Step 3. We prove (\ref{f1}) in the
general case, without assuming (\ref{closure}). We fix $L\in\mathcal{F}%
(\theta)$ and define\
\[
\hat{f}_{t}:=f_{t}+\mathrm{I}_{L\cap\operatorname*{dom}f},
\]
so that
\[
f_{L}:=\sup_{t\in T}\hat{f}_{t}=f+\mathrm{I}_{L\cap\operatorname*{dom}%
f}=f+\mathrm{I}_{L},
\]%
\[
\hat{f}_{t}(\theta)=f_{t}(\theta),\text{ }f_{L}(\theta)=0,\text{ and
}\operatorname*{dom}f_{L}=L\cap\operatorname*{dom}f.
\]
Moreover, the family $\left\{  \hat{f}_{t},\text{ }t\in T\right\}  $ satisfies
condition (\ref{closure}) (see the proof of Proposition \ref{p2}). Since
(see\ \cite[Lemma 3.1]{CHL19})
\[
\partial f(\theta)=\bigcap\nolimits_{L\in\mathcal{F}(\theta)}\partial
(f+\mathrm{I}_{L})(\theta)=\bigcap\nolimits_{L\in\mathcal{F}(\theta)}\partial
f_{L}(\theta),
\]
applying\ Step 2 to the family $\left\{  \hat{f}_{t},\text{ }t\in T\right\}  $
we get
\begin{align*}
\partial f(\theta)  &  =\bigcap\nolimits_{L\in\mathcal{F}(\theta)}\partial
f_{L}(\theta)\\
&  \subset\bigcap\nolimits_{L\in\mathcal{F}(\theta)}\operatorname*{co}\left\{
\bigcap\nolimits_{\varepsilon>0}\operatorname*{cl}\nolimits^{s}\left(
\bigcup\nolimits_{t\in T,\text{ }\hat{f}_{t}(\theta)\geq-\varepsilon}%
\partial_{\varepsilon}(\hat{f}_{t}+\mathrm{I}_{L\cap\operatorname*{dom}f_{L}%
})(\theta)\right)  \right\} \\
&  =\bigcap\nolimits_{L\in\mathcal{F}(\theta)}\operatorname*{co}\left\{
\bigcap\nolimits_{\varepsilon>0}\operatorname*{cl}\nolimits^{s}\left(
\bigcup\nolimits_{T_{\varepsilon}(\theta)}\partial_{\varepsilon}%
(f_{t}+\mathrm{I}_{L\cap\operatorname*{dom}f})(\theta)\right)  \right\}  ,
\end{align*}
and the inclusion \textquotedblleft$\subset$\textquotedblright\ in (\ref{f1}) follows.
\end{proof}

\medskip

The following corollary closing this section considers a frequent hypothesis in
the literature.

\begin{cor}
\label{khay}Let $f_{t}:X\rightarrow\overline{\mathbb{R}},$ $t\in T,$
be\ convex functions. If $f=\sup_{t\in T}f_{t}$ is finite and continuous at
some point, then for every $x\in X$%
\begin{align*}
\partial f(x)  &  =\mathrm{N}_{\operatorname*{dom}f}(x)+\overline
{\operatorname*{co}}\left\{  \bigcap\nolimits_{\varepsilon>0}%
\operatorname*{cl}\left(  \bigcup\nolimits_{t\in T_{\varepsilon}(x)}%
\partial_{\varepsilon}f_{t}(x)\right)  \right\} \\
&  =\mathrm{N}_{\operatorname*{dom}f}(x)+\operatorname*{co}\left\{
\bigcap\nolimits_{\varepsilon>0}\operatorname*{cl}\left(  \bigcup
\nolimits_{t\in T_{\varepsilon}(x)}\partial_{\varepsilon}f_{t}(x)\right)
\right\}  \text{ \ \ (if }X=\mathbb{R}^{n}).
\end{align*}

\end{cor}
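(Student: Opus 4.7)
The plan is to specialise Theorem~\ref{hamdkathirbisb} to the continuous setting, mirroring the way Corollary~\ref{compactv} was extracted from Theorem~\ref{hamdkathir}. Continuity of $f$ at some point $x_{0}$ automatically forces condition (\ref{closure}) (by \cite[Corollary~9]{HLZ08}), so formula (\ref{f1b}) is available. Normalising $x=\theta$, $f(\theta)=0$, and setting $B_{\varepsilon}:=\bigcup_{t\in T_{\varepsilon}(\theta)}\partial_{\varepsilon}f_{t}(\theta)$, $A:=\bigcap_{\varepsilon>0}\operatorname{cl}(B_{\varepsilon})$ and $N:=\mathrm{N}_{\operatorname{dom}f}(\theta)$, the target identity is $\partial f(\theta)=N+\overline{\operatorname{co}}(A)$.

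The inclusion $N+\overline{\operatorname{co}}(A)\subset\partial f(\theta)$ is a routine verification: if $z_{i}^{\ast}\in\partial_{\varepsilon}f_{t_{i}}(\theta)$ with $t_{i}\in T_{\varepsilon}(\theta)$ and $z_{i}^{\ast}\to z^{\ast}$ in the $w^{\ast}$-topology, then for every $y\in X$,
\[
f(y)\geq f_{t_{i}}(y)\geq f_{t_{i}}(\theta)+\langle z_{i}^{\ast},y\rangle-\varepsilon\geq f(\theta)+\langle z_{i}^{\ast},y\rangle-2\varepsilon,
\]
which upon passing to the limit and intersecting over $\varepsilon>0$ places $z^{\ast}$ in $\partial f(\theta)$. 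The $w^{\ast}$-closedness and convexity of $\partial f(\theta)$, together with the invariance $N+\partial f(\theta)=\partial f(\theta)$, finish this direction.

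For the reverse, I would restrict (\ref{f1b}) to the cofinal subfamily $\mathcal{F}_{0}(\theta):=\{L\in\mathcal{F}(\theta):x_{0}\in L\}$. For each $L\in\mathcal{F}_{0}(\theta)$, the point $x_{0}$ lies in $\operatorname{int}(\operatorname{dom}f)\cap L$, which provides the qualification $\mathrm{N}_{L\cap\operatorname{dom}f}(\theta)=N+L^{\perp}$. Substituting into (\ref{f1b}) reduces the task to proving
\[
\bigcap_{L\in\mathcal{F}_{0}(\theta)}\operatorname{co}\Bigl\{\bigcap_{\varepsilon>0}\operatorname{cl}(B_{\varepsilon}+N+L^{\perp})\Bigr\}\subset\overline{\operatorname{co}}(A)+N.
\]
I would handle this at the level of support functions: since $A\subset\partial f(\theta)$, the function $\sigma_{A}$ is majorised by $f(\theta+\cdot)$ and hence continuous at $x_{0}$, so Moreau--Rockafellar (\ref{MR}) applied to $\sigma_{A}+\sigma_{N}$ yields $\partial(\sigma_{A}+\sigma_{N})(\theta)=\overline{\operatorname{co}}(A)+N$; in particular $\overline{\operatorname{co}}(A)+N$ is already closed. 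The same continuity permits interchanging the inner intersection over $\varepsilon$ with the closure of the Minkowski sum, and relation (\ref{tr}) absorbs the $L^{\perp}$-contribution from the outer intersection over $L$. In $\mathbb{R}^{n}$ the intersection over $L$ is trivially superfluous and $\operatorname{co}(A)+N$ is already closed, yielding the sharper conclusion without the outer bar.

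The main obstacle I foresee is the innermost step $\bigcap_{\varepsilon>0}\operatorname{cl}(B_{\varepsilon}+N+L^{\perp})\subset\overline{\operatorname{co}}(A)+N+L^{\perp}$: because the $B_{\varepsilon}$ may be unbounded, closures of Minkowski sums do not in general commute with intersections over $\varepsilon$. I expect the argument to proceed by a support-function comparison analogous to Lemma~\ref{lemtec}, exploiting the continuity of $\sigma_{A}$ at $x_{0}$ and the monotonicity $B_{\varepsilon'}\subset B_{\varepsilon}$ for $\varepsilon'<\varepsilon$ to control any $w^{\ast}$-cluster point of elements of $B_{\varepsilon}+N+L^{\perp}$ and force its $B_{\varepsilon}$-component into $\overline{\operatorname{co}}(A)$ in the limit.
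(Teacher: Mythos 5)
The crux of your argument is the inclusion
\[
\bigcap\nolimits_{\varepsilon>0}\operatorname{cl}\bigl(B_{\varepsilon}+\mathrm{N}_{\operatorname{dom}f}(\theta)+L^{\perp}\bigr)\subset\overline{\operatorname{co}}\Bigl(\bigcap\nolimits_{\varepsilon>0}\operatorname{cl}B_{\varepsilon}\Bigr)+\mathrm{N}_{\operatorname{dom}f}(\theta)+L^{\perp},
\]
and this is precisely the step you leave unproved ("I expect the argument to proceed by\dots"). That is a genuine gap, not a routine verification: for a decreasing family of closed sets $B_{\varepsilon}$ and a closed convex cone $C$, the inclusion $\bigcap_{\varepsilon}\operatorname{cl}(B_{\varepsilon}+C)\subset(\bigcap_{\varepsilon}\operatorname{cl}B_{\varepsilon})+C$ is false in general (the intersection on the left can be a half-space while the one on the right is empty), so whatever makes it true here must come from the specific structure --- the local equi-boundedness of the ``non-normal-cone part'' of approximate subgradients furnished by the continuity of $f$ at $x_{0}$. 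Your proposed patch, ``a support-function comparison analogous to Lemma~\ref{lemtec}'', does not transfer: at the level of support functions the required inequality is $\overline{\operatorname{co}}(p+\mathrm{I}_{C})\leq\overline{\operatorname{co}}(p)+\mathrm{I}_{C}$ with $p=\inf_{\varepsilon}\sigma_{B_{\varepsilon}}$, which is the \emph{hard} direction of a closed-convex-hull-of-a-sum identity and needs its own qualification argument (made delicate by the finite-dimensional constraint $L$); and at the level of nets, a generic element of $\operatorname{cl}(B_{\varepsilon}+\mathrm{N}+L^{\perp})$ has already had unbounded components mixed in, with no canonical way to peel off a convergent $B_{\varepsilon}$-part. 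So as written the reverse inclusion is not established.

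The paper proceeds differently and thereby never faces this splitting problem. It does not post-process formula (\ref{f1b}); instead it reruns the proof of Theorem~\ref{hamdkathirbisb} --- writing $f=h^{\ast}$ with $h=\inf_{t}f_{t}^{\ast}$, i.e.\ as a supremum of continuous affine functions indexed by $X^{\ast}$ --- but replaces formula (\ref{mainformulaalternatived}) by the formulas of Corollary~\ref{compactv}, whose statement (available exactly because $f$ is finite and continuous somewhere) already carries the normal cone as a separate summand and no intersection over $L$. Each remaining subdifferential $\partial(\limsup_{i}(z_{i}^{\ast\gamma}-h(z_{i}^{\ast\gamma})))(\theta)$ is then handled by Lemma~\ref{lemtec}, where the continuity hypothesis forces $\operatorname{aff}(\operatorname{dom}f)=X$, so the conclusion reads $\mathrm{N}_{\operatorname{dom}f}(\theta)+\bigcap_{\varepsilon>0}\operatorname{cl}(\bigcup_{t\in T_{\varepsilon}(\theta)}\partial_{\varepsilon}f_{t}(\theta))$ directly. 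The decomposition into an ``$A$-part plus normal-cone part'' is thus produced pointwise by the Alaoglu compactness argument inside Lemma~\ref{lemteco} (the nets $(z_{i}^{\ast})$ lie in a polar $(U\cap E)^{\circ}$ thanks to the continuity of $f$ at $x_{0}$), \emph{before} any Minkowski sums are formed and closed. If you want to keep your plan, you would essentially have to reprove that lemma inside your ``main obstacle'' step; the cleaner fix is to follow the paper's order of operations.
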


\begin{proof}
The proof is similar to the one of Theorem \ref{hamdkathirbisb}, but with
the use of the formulas in Corollary \ref{compactv} instead of formula
(\ref{mainformulaalternatived}).
\end{proof}

We close this section with an extension of Theorem \ref{hamdkathirbisb} to nonconvex functions. We also refer to \cite{MoNg13}, and references therein, for other studies on the subdifferential of the supremum of nonconvex functions. 
\begin{cor}
Let $f_{t}:X\rightarrow \overline{\mathbb{R}},$ $t\in T,$ be a family of
non-necessarily convex functions and $f:=\sup_{t\in T}f_{t}.$ Assume that 
\[
f^{\ast \ast }=\sup_{t\in T}f_{t}^{\ast \ast }.
\]
Then (\ref{f1b}) holds.
\end{cor}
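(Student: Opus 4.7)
The plan is to reduce to Theorem~\ref{hamdkathirbisb} applied to the convex lsc family $\{f_t^{\ast\ast}\}_{t\in T}$, whose pointwise supremum equals $f^{\ast\ast}$ by hypothesis.

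First, for the inclusion ``$\subset$'' I would assume $\partial f(x)\neq\emptyset$, the other case being vacuous. Any $z^\ast\in\partial f(x)$ provides an affine minorant of $f$ at $x$, hence of $f^{\ast\ast}$, forcing $f(x)=f^{\ast\ast}(x)\in\mathbb{R}$ and $\partial f(x)\subset\partial f^{\ast\ast}(x)$. Since each $f_t^{\ast\ast}$ is lsc and convex and $\sup_{t\in T}f_t^{\ast\ast}=f^{\ast\ast}$ is itself lsc, condition (\ref{closure}) holds trivially for $\{f_t^{\ast\ast}\}$, and Theorem~\ref{hamdkathirbisb} yields
\[
\partial f^{\ast\ast}(x)=\bigcap_{L\in\mathcal{F}(x)}\operatorname*{co}\left\{\bigcap_{\varepsilon>0}\operatorname*{cl}\left(\bigcup_{t\in T'_\varepsilon(x)}\partial_\varepsilon f_t^{\ast\ast}(x)+\mathrm{N}_{L\cap\operatorname*{dom}f^{\ast\ast}}(x)\right)\right\},
\]
where $T'_\varepsilon(x):=\{t\in T:f_t^{\ast\ast}(x)\geq f^{\ast\ast}(x)-\varepsilon\}$.

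Next, I would translate this convex formula back to the original data via three comparisons: (a) $T'_\varepsilon(x)\subset T_\varepsilon(x)$, using $f_t\geq f_t^{\ast\ast}$ and $f(x)=f^{\ast\ast}(x)$; (b) $\operatorname*{dom}f\subset\operatorname*{dom}f^{\ast\ast}$, whence $\mathrm{N}_{L\cap\operatorname*{dom}f^{\ast\ast}}(x)\subset\mathrm{N}_{L\cap\operatorname*{dom}f}(x)$; and (c) the calibration $f_t(x)-f_t^{\ast\ast}(x)\leq\varepsilon$ for $t\in T'_\varepsilon(x)$, which follows from the sandwich $f_t^{\ast\ast}(x)\geq f^{\ast\ast}(x)-\varepsilon=f(x)-\varepsilon\geq f_t(x)-\varepsilon$. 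From (c), any $z^\ast\in\partial_\varepsilon f_t^{\ast\ast}(x)$ satisfies $f_t(y)\geq f_t^{\ast\ast}(y)\geq f_t^{\ast\ast}(x)+\langle z^\ast,y-x\rangle-\varepsilon\geq f_t(x)+\langle z^\ast,y-x\rangle-2\varepsilon$, so $\partial_\varepsilon f_t^{\ast\ast}(x)\subset\partial_{2\varepsilon}f_t(x)$. Substituting (a)--(c) into the displayed formula and reindexing $\varepsilon'=2\varepsilon$ inside $\bigcap_{\varepsilon>0}$ delivers $\partial f(x)\subset\partial f^{\ast\ast}(x)\subset$ the right-hand side of (\ref{f1b}).

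For the reverse inclusion ``$\supset$'' the biconjugate hypothesis is not needed. If $t\in T_\varepsilon(x)$ and $z^\ast\in\partial_\varepsilon f_t(x)$, then the affine function $y\mapsto f_t(x)+\langle z^\ast,y-x\rangle-\varepsilon$ minorizes $f_t$ and hence $f$, with value at $x$ at least $f(x)-2\varepsilon$, so $z^\ast\in\partial_{2\varepsilon}f(x)$. For $w^\ast\in\mathrm{N}_{L\cap\operatorname*{dom}f}(x)$, the sum $z^\ast+w^\ast$ lies in the convex closed set $\partial_{2\varepsilon}(f+\mathrm{I}_L)(x)$, and the operations $\operatorname*{cl}$, $\bigcap_{\varepsilon>0}$ and $\operatorname*{co}$ keep the resulting set inside $\partial(f+\mathrm{I}_L)(x)$. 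Intersecting over $L\in\mathcal{F}(x)$ and invoking $\bigcap_{L\in\mathcal{F}(x)}\partial(f+\mathrm{I}_L)(x)=\partial f(x)$, which holds for arbitrary $f$ by choosing, for each candidate direction $y$, an $L\in\mathcal{F}(x)$ containing $y$, concludes the argument. The main technical hurdle throughout is the calibration (c); it pins down the gap $f_t(x)-f_t^{\ast\ast}(x)$ by $\varepsilon$, and this is precisely where both the nonemptiness of $\partial f(x)$ and the biconjugate supremum hypothesis come into play.
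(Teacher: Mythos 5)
Your proposal is correct and follows essentially the same route as the paper: reduce to Theorem~\ref{hamdkathirbisb} applied to the lsc convex family $\{f_t^{\ast\ast}\}_{t\in T}$ (for which (\ref{closure}) holds automatically), use $\partial f(x)\neq\emptyset$ to get $f(x)=f^{\ast\ast}(x)$ and $\partial f(x)=\partial f^{\ast\ast}(x)$, and then transfer back via the sandwich $f_t(x)\geq f_t^{\ast\ast}(x)\geq f(x)-\varepsilon\geq f_t(x)-\varepsilon$, which gives exactly the paper's inclusions $T'_\varepsilon(x)\subset T_\varepsilon(x)$, $\partial_\varepsilon f_t^{\ast\ast}(x)\subset\partial_{2\varepsilon}f_t(x)$, and $\mathrm{N}_{L\cap\operatorname{dom}f^{\ast\ast}}(x)\subset\mathrm{N}_{L\cap\operatorname{dom}f}(x)$. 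The only difference is that you spell out the straightforward reverse inclusion, which the paper omits.
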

\begin{proof}
It suffices to prove the inclusion \textquotedblleft $\subset $%
\textquotedblright\ in (57) for $x$ such that $\partial f(x)\neq \emptyset ;$
hence, $f^{\ast }$ is proper, $f(x)=f^{\ast \ast }(x)$ and $\partial
f(x)=\partial (\overline{\limfunc{co}}f)(x)=\partial f^{\ast \ast }(x).$
Thus, applying\ the second statement in Theorem 11 to the family $\left\{
f_{t}^{\ast \ast },\text{ }t\in T\right\} ,$ 
\[
\partial f(x)=\partial f^{\ast \ast }(x)=\bigcap\nolimits_{L\in \mathcal{F}%
(x)}\limfunc{co}\left\{ \bigcap\nolimits_{\varepsilon >0}\limfunc{cl}\left(
\bigcup\nolimits_{t\in T_{\varepsilon }^{1}(x)}\partial _{\varepsilon
}f_{t}^{\ast \ast }(x)+\mathrm{N}_{L\cap \limfunc{dom}f^{\ast \ast
}}(x)\right) \right\} ,
\]%
where $T_{\varepsilon }^{1}(x):=\left\{ t\in T:f_{t}^{\ast \ast }(x)\geq
f(x)-\varepsilon \right\} .$ Observe that every\ $t\in T_{\varepsilon
}^{1}(x)$ satisfies 
\[
f_{t}(x)\geq f_{t}^{\ast \ast }(x)\geq f(x)-\varepsilon \geq
f_{t}(x)-\varepsilon ;
\]%
hence, $t\in T_{\varepsilon }(x)$ and $\partial _{\varepsilon }f_{t}^{\ast
\ast }(x)\subset \partial _{2\varepsilon }f_{t}(x).$ Additionally, the
inequality $f^{\ast \ast }\leq f$ implies that $\mathrm{N}_{L\cap \limfunc{%
dom}f^{\ast \ast }}(x)\subset \mathrm{N}_{L\cap \limfunc{dom}f}(x),$ and the
desired inclusion follows.
\end{proof}

\section{Two\ applications in optimization\label{Secap}}

First, in this section, we apply the previous results\ to\ extend the classical
Fenchel duality to the nonconvex framework. This will lead us to recover some of
the results in \cite{CH10,CH12,CH13} (see, also, \cite{LV10}), relating the
solution set of a nonconvex optimization problem and its convexified
relaxation. Second, we establish\ Fritz-John and KKT optimality conditions for
convex semi-infinite optimization problems, improving similar results in \cite{CHL19}.

Given\ a function\ $g:X\rightarrow\mathbb{R}_{\infty}$, we recall that the Fenchel
conjugate\ of $g$ is the function\ $f:X^{\ast}\rightarrow\overline{\mathbb{R}%
}$, given by
\begin{equation}\label{eqs}
f(x^{\ast}):=\sup_{x\in X}(\left\langle x,x^{\ast}\right\rangle -g(x)).
\end{equation}
When $g$ is proper, lsc and convex, the classical Fenchel duality, together with (\ref{moreau}), yields
\begin{equation}
\partial f=(\partial g)^{-1}. \label{fenchel}%
\end{equation}
We extend this relation to non-necessarily convex functions. We denote below the closure with respect to the weak topology in $X$ by $\operatorname*{cl}^{w}$.

\begin{prop}\label{conj}
Assume that the function $f$ is proper. Then, for every $x^{\ast}\in X^{\ast},$
\[
\partial f(x^{\ast})=\bigcap\nolimits_{L\in\mathcal{F}(x^{\ast})}%
\operatorname*{co}\left\{  \bigcap\nolimits_{\varepsilon>0}\operatorname*{cl}%
\nolimits^{w}\left(  (\partial_{\varepsilon}g)^{-1}(x^{\ast})+\mathrm{N}%
_{L\cap\operatorname*{dom}f}(x^{\ast})\right)  \right\}  .
\]
If, in addition, $f$ is finite and (weak*-) continuous somewhere, then
\begin{align*}
\partial f(x^{\ast})  &  =\overline{\operatorname*{co}}\left\{  \left(
(\partial(\operatorname*{cl}\nolimits^{w} g))^{-1}(x^{\ast})\right)  \right\}  +\mathrm{N}%
_{\operatorname*{dom}f}(x^{\ast})\\
&  =\operatorname*{co}\left\{  \left(  (\partial(\operatorname*{cl} g))^{-1}(x^{\ast
})\right)  \right\}  +\mathrm{N}_{\operatorname*{dom}f}(x^{\ast})\text{ \ (if
}X=\mathbb{R}^{n}\text{),}%
\end{align*}
where $\operatorname*{cl}^{w} g$ is the weak-lsc hull of $g.$
\end{prop}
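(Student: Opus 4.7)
The plan is to view $f$ as the supremum of a family of $w^{\ast}$-continuous affine functions on $X^{\ast}$, and apply Theorem~\ref{hamdkathirbisb} (in the form of formula~(\ref{f1b})) and Corollary~\ref{khay} with the underlying locally convex space being $(X^{\ast}, w^{\ast})$, whose topological dual is $X$ equipped with its weak topology $\sigma(X, X^{\ast})$. Writing $f=\sup_{x\in X}f_{x}$ with $f_{x}(x^{\ast}):=\langle x,x^{\ast}\rangle-g(x)$, each $f_{x}$ is affine and $w^{\ast}$-continuous, hence convex and $w^{\ast}$-lsc, and $f$ is automatically $w^{\ast}$-lsc, so condition~(\ref{closure}) holds trivially (both sides equal $f$).

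The key computations are that $\partial_{\varepsilon}f_{x}(x^{\ast})=\{x\}$ for every $x\in\operatorname*{dom}g$ and $\varepsilon\geq 0$, and, via the Fenchel--Young characterization $x^{\ast}\in\partial_{\varepsilon}g(x)\Leftrightarrow g(x)+g^{\ast}(x^{\ast})-\langle x,x^{\ast}\rangle\leq\varepsilon$ (valid for an arbitrary function $g$), that the $\varepsilon$-active set is $T_{\varepsilon}(x^{\ast})=(\partial_{\varepsilon}g)^{-1}(x^{\ast})$. Substituting these into~(\ref{f1b}), and interpreting the closure therein as $\operatorname*{cl}\nolimits^{w}$ (since the $w^{\ast}$-topology on the dual of $(X^{\ast},w^{\ast})$ is the weak topology on $X$), yields the first formula. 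Under the continuity hypothesis on $f$, Corollary~\ref{khay} similarly gives
\[
\partial f(x^{\ast})=\mathrm{N}_{\operatorname*{dom}f}(x^{\ast})+\overline{\operatorname*{co}}\left\{\bigcap_{\varepsilon>0}\operatorname*{cl}\nolimits^{w}(\partial_{\varepsilon}g)^{-1}(x^{\ast})\right\},
\]
with $\overline{\operatorname*{co}}$ replaced by $\operatorname*{co}$ when $X=\mathbb{R}^{n}$.

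The main obstacle is then to identify
\[
\bigcap_{\varepsilon>0}\operatorname*{cl}\nolimits^{w}(\partial_{\varepsilon}g)^{-1}(x^{\ast})=(\partial(\operatorname*{cl}\nolimits^{w}g))^{-1}(x^{\ast}).
\]
A preliminary observation is that $(\operatorname*{cl}\nolimits^{w}g)^{\ast}=g^{\ast}=f$: since $x\mapsto\langle x,x^{\ast}\rangle$ is weakly continuous, the supremum defining the conjugate is unchanged when $g$ is replaced by its weak lsc hull. For the inclusion $\supset$, given $x$ with $(\operatorname*{cl}\nolimits^{w}g)(x)+f(x^{\ast})=\langle x,x^{\ast}\rangle$, the definition of the weak lsc hull produces, inside the weak neighborhood $\{y:|\langle y-x,x^{\ast}\rangle|<\varepsilon/2\}$ of $x$, some $y$ with $g(y)+f(x^{\ast})-\langle y,x^{\ast}\rangle\leq\varepsilon$. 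The reverse inclusion sandwiches $(\operatorname*{cl}\nolimits^{w}g)(x)+f(x^{\ast})-\langle x,x^{\ast}\rangle$ between zero and zero: the upper bound by passing to the limit along approximating nets, and the lower bound from $\operatorname*{cl}\nolimits^{w}g\geq g^{\ast\ast}$ (since $g^{\ast\ast}$ is weakly lsc) combined with the Fenchel--Young inequality $g^{\ast\ast}(x)+f(x^{\ast})\geq\langle x,x^{\ast}\rangle$. The resulting equality, together with the general identity $x^{\ast}\in\partial h(x)\Leftrightarrow h(x)+h^{\ast}(x^{\ast})=\langle x,x^{\ast}\rangle$ (valid for any function $h$) applied to $h=\operatorname*{cl}\nolimits^{w}g$, completes the argument.
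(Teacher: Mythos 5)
Your proposal is correct and follows essentially the same route as the paper: realize $f=\sup_{x\in\operatorname*{dom}g}f_{x}$ with the $w^{\ast}$-continuous affine functions $f_{x}=\langle x,\cdot\rangle-g(x)$, note $\partial_{\varepsilon}f_{x}(x^{\ast})=\{x\}$ and $T_{\varepsilon}(x^{\ast})=(\partial_{\varepsilon}g)^{-1}(x^{\ast})$, and apply (\ref{f1b}) and Corollary \ref{khay} in the pair $((X^{\ast},w^{\ast}),X)$. The only difference is that where the paper invokes \cite[Lemma 2.3]{CHG18} for the identity $\bigcap_{\varepsilon>0}\operatorname*{cl}^{w}((\partial_{\varepsilon}g)^{-1}(x^{\ast}))=(\partial(\operatorname*{cl}^{w}g))^{-1}(x^{\ast})$, you prove it directly (and correctly) via $(\operatorname*{cl}^{w}g)^{\ast}=g^{\ast}$ and the Fenchel--Young characterization of $\varepsilon$-subgradients.
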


\begin{proof}
We define the convex functions $f_{x}:X^{\ast}\rightarrow\overline{\mathbb{R}%
},$ $x\in X,$ as
\[
f_{x}(x^{\ast}):=\left\langle x,x^{\ast}\right\rangle -g(x),\text{ }%
x\in\operatorname*{dom}g,
\]
so that $f_{x}$ are weak*-continuous and $f=\sup_{x\in\operatorname*{dom}%
g}f_{x}.$ Then, according to formula\ (\ref{f1b}), for every $x^{\ast}\in X^{\ast}%
$\ we have
\[
\partial f(x^{\ast})=\bigcap\nolimits_{L\in\mathcal{F}(x^{\ast})}%
\operatorname*{co}\left\{  \bigcap\nolimits_{\varepsilon>0}\operatorname*{cl}%
\nolimits^{w}\left(  \bigcup\nolimits_{x\in T_{\varepsilon}(x^{\ast})}%
\partial_{\varepsilon}f_{x}(x^{\ast})+\mathrm{N}_{L\cap\operatorname*{dom}%
f}(x^{\ast})\right)  \right\}  ,
\]
where
\[
T_{\varepsilon}(x^{\ast}):=\left\{  x\in\operatorname*{dom}g: f_{x}%
(x^{\ast})\geq f(x^{\ast})-\varepsilon\right\}  =(\partial_{\varepsilon
}g)^{-1}(x^{\ast}).
\]
Consequently, the first\ formula comes from\ the fact that\ $\partial
_{\varepsilon}f_{x}(x^{\ast})=\left\{  x\right\}  .$

Assume now that $f$ is finite and weak*-continuous somewhere. Then, arguing in a
similar way, but using Corollary \ref{khay} instead of (\ref{f1b}),
\begin{align*}
\partial f(x^{\ast})  &  =\overline{\operatorname*{co}}\left\{  \bigcap
\nolimits_{\varepsilon>0}\operatorname*{cl}\nolimits^{w}\left(  (\partial
_{\varepsilon}g)^{-1}(x^{\ast})\right)  \right\}  +\mathrm{N}%
_{\operatorname*{dom}f}(x^{\ast})\\
&  =\operatorname*{co}\left\{  \bigcap\nolimits_{\varepsilon>0}%
\operatorname*{cl}\left(  (\partial_{\varepsilon}g)^{-1}(x^{\ast
})\right)  \right\}  +\mathrm{N}_{\operatorname*{dom}f}(x^{\ast})\text{ \ (if
}X=\mathbb{R}^{n}\text{).}%
\end{align*}
The desired formulas follow as
\begin{equation}
\bigcap\nolimits_{\varepsilon>0}\operatorname*{cl}\nolimits^{w}\left(
(\partial_{\varepsilon}g)^{-1}(x^{\ast})\right)  =(\partial(\operatorname*{cl}\nolimits^{w} g))^{-1}(x^{\ast}), \label{yad}%
\end{equation}
according to \cite[Lemma 2.3]{CHG18}.
\end{proof}

Observing that $\operatorname{Argmin}(\overline{\operatorname*{co}}g)=\partial
f(\theta),$ the previous proposition gives:
\begin{cor}
Assume that the function $f$ is proper. Then we have
\[
\operatorname{Argmin}(\overline{\operatorname*{co}}g)=\bigcap\nolimits_{L\in
\mathcal{F}(\theta)}\operatorname*{co}\left\{  \bigcap\nolimits_{\varepsilon
>0}\operatorname*{cl}\nolimits^{w}\left(  \varepsilon\text{-}\operatorname{Argmin}%
g+\mathrm{N}_{L\cap\operatorname*{dom}f}(\theta)\right)  \right\}  .
\]
If, in addition, $f$ is finite and continuous at some point, then
\begin{align*}
\operatorname{Argmin}(\overline{\operatorname*{co}}g)  &  =\overline
{\operatorname*{co}}(\operatorname{Argmin}(\operatorname*{cl}\nolimits^{w} g))+\mathrm{N}%
_{\operatorname*{dom}f}(\theta)\\
&  =\operatorname*{co}(\operatorname{Argmin}(\operatorname*{cl} g))+\mathrm{N}%
_{\operatorname*{dom}f}(\theta)\text{ \ (if }X=\mathbb{R}^{n}\text{).}%
\end{align*}

\end{cor}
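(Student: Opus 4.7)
The plan is to specialize Proposition~\ref{conj} to the point $x^{\ast}=\theta\in X^{\ast}$ and then translate every subdifferential expression on the right-hand side into an Argmin expression. The hint preceding the statement already isolates the crucial identity $\partial f(\theta)=\operatorname{Argmin}(\overline{\operatorname*{co}}g)$; modulo this identity (and two simpler analogues), both displayed formulas are just literal rewrites of the two formulas in Proposition~\ref{conj}.

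First I would verify the three translation identities. \emph{(a)} From the definition of the Fenchel conjugate, $x\in\partial f(\theta)$ is equivalent to $f(y^{\ast})\geq f(\theta)+\langle x,y^{\ast}\rangle$ for all $y^{\ast}\in X^{\ast}$, i.e.\ $f^{\ast}(x)\leq -f(\theta)$. Since $f=g^{\ast}$ is proper, Moreau's theorem (\ref{moreau}) gives $f^{\ast}=\overline{\operatorname*{co}}g$ and (applied again to $g^{\ast}=(\overline{\operatorname*{co}}g)^{\ast}$) yields $-f(\theta)=\inf g=\inf\overline{\operatorname*{co}}g$; hence $\partial f(\theta)=\operatorname{Argmin}(\overline{\operatorname*{co}}g)$. \emph{(b)} Directly from the definition of the $\varepsilon$-subdifferential, $\theta\in\partial_{\varepsilon}g(x)$ means $g(x)\leq g(y)+\varepsilon$ for every $y\in X$, so $(\partial_{\varepsilon}g)^{-1}(\theta)=\varepsilon\text{-}\operatorname{Argmin}g$. \emph{(c)} Applying (a) and (b) with $g$ replaced by $\operatorname*{cl}\nolimits^{w}g$ and by $\operatorname*{cl}g$ respectively, one obtains $(\partial(\operatorname*{cl}\nolimits^{w}g))^{-1}(\theta)=\operatorname{Argmin}(\operatorname*{cl}\nolimits^{w}g)$ and $(\partial(\operatorname*{cl}g))^{-1}(\theta)=\operatorname{Argmin}(\operatorname*{cl}g)$.

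Second, I would plug $x^{\ast}=\theta$ into the first formula of Proposition~\ref{conj}. The index set there becomes $(\partial_{\varepsilon}g)^{-1}(\theta)=\varepsilon\text{-}\operatorname{Argmin}g$ by (b), and the left-hand side becomes $\operatorname{Argmin}(\overline{\operatorname*{co}}g)$ by (a); this is exactly the first displayed equality of the corollary. For the second part, under the extra continuity assumption on $f$, the second formula of Proposition~\ref{conj} applied at $\theta$ reads
\[
\partial f(\theta)=\overline{\operatorname*{co}}\bigl\{(\partial(\operatorname*{cl}\nolimits^{w}g))^{-1}(\theta)\bigr\}+\mathrm{N}_{\operatorname*{dom}f}(\theta),
\]
with the closure removed and $\operatorname*{cl}g$ in place of $\operatorname*{cl}\nolimits^{w}g$ when $X=\mathbb{R}^{n}$. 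Substituting (c) and (a) yields precisely the two remaining displayed equalities.

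The only step where there is anything to check is (a): one must confirm that $\inf g=\inf\overline{\operatorname*{co}}g$, which follows at once from $(\overline{\operatorname*{co}}g)^{\ast}=g^{\ast\ast\ast}=g^{\ast}$ under the standing hypothesis that $f=g^{\ast}$ is proper. Everything else is a routine specialization, so no real obstacle is expected beyond tracking that the properness hypothesis on $f$ is exactly what is needed to invoke Moreau's theorem for $g$.
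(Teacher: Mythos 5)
Your proposal is correct and follows exactly the paper's route: the paper derives this corollary from Proposition \ref{conj} at $x^{\ast}=\theta$ via the single observation $\operatorname{Argmin}(\overline{\operatorname*{co}}g)=\partial f(\theta)$, which is your identity (a). Your additional verifications (b) and (c) are just the routine translations the paper leaves implicit, so there is nothing further to check.
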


When $X$ is a normed space, the set $\partial f(x^*)$ is also seen as a subset of the bidual space, whereas Proposition \ref{conj} characterizes only the part of $\partial f(x^*)$ in the subspace $X$ of $X^{**}$. A light adaptation of Proposition \ref{conj} allows us to have a complete picture of $\partial f(x^*)$, as a proper set of the bidual space $X^{**}$. 
In such a setting, we denote the weak*-topology $\sigma(X^{**},X^*)$ in $X^{**}$ by $w^{**}$, and introduce the function $\overline{g}^{w^{**}}:X^{**}\to\overline{\mathbb{R}}$ defined by
$$
\overline{g}^{w^{**}}(y)=\liminf_{x\to^{w^{**}}y} g(x),\ y\in X^{**}.
$$
We refer, e.g., to \cite[Chapter 1]{BoSh00} for these concepts.

\begin{prop}Assume that $X$ is a normed space and $X^*$ is endowed with the dual norm topology. If the function $f$ is proper, then for every $x^{\ast}\in X^{\ast}$
\[
\partial f(x^{\ast})=\bigcap\nolimits_{L\in\mathcal{F}(x^{\ast})}%
\operatorname*{co}\left\{  \bigcap\nolimits_{\varepsilon>0}\operatorname*{cl}%
\nolimits^{w^{**}}\left(  (\partial_{\varepsilon}g)^{-1}(x^{\ast})+\mathrm{N}%
_{L\cap\operatorname*{dom}f}(x^{\ast})\right)  \right\} .
\]
If, in addition, $f$ is finite and (norm-) continuous somewhere, then
\begin{align*}
\partial f(x^{\ast})  &  =\overline{\operatorname*{co}}\left\{ 
(\partial\overline{g}^{w^{**}})^{-1}(x^{\ast}) \right\}  +\mathrm{N}%
_{\operatorname*{dom}f}(x^{\ast}).
\end{align*}
\end{prop}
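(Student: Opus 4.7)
The plan is to adapt the proof of Proposition \ref{conj} to the bidual setting, by regarding $X^{\ast}$ (with the dual norm topology) as the underlying locally convex space whose dual is $X^{\ast\ast}$ equipped with the weak* topology $\sigma(X^{\ast\ast},X^{\ast})=w^{\ast\ast}$. For each $x\in\operatorname*{dom}g$, define the affine function
\[
f_{x}:X^{\ast}\to\mathbb{R},\qquad f_{x}(x^{\ast}):=\left\langle x,x^{\ast}\right\rangle -g(x),
\]
viewed as an element of $X^{\ast\ast}$ via the canonical embedding. Each $f_{x}$ is norm-continuous on $X^{\ast}$, hence lsc, and $f=\sup_{x\in\operatorname*{dom}g}f_{x}$. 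Therefore $f$ is also lsc (as a supremum of lsc functions), so condition (\ref{closure}) is automatically satisfied for this family of convex functions on $X^{\ast}$.

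For every $x^{\ast}\in X^{\ast}$ and every $x\in\operatorname*{dom}g$, the $\varepsilon$-subdifferential (in the duality $\langle X^{\ast\ast},X^{\ast}\rangle$) of the affine function $f_{x}$ at $x^{\ast}$ reduces to the singleton $\{x\}\subset X^{\ast\ast}$, while the $\varepsilon$-active index set is
\[
T_{\varepsilon}(x^{\ast})=\{x\in\operatorname*{dom}g:\langle x,x^{\ast}\rangle-g(x)\geq f(x^{\ast})-\varepsilon\}=(\partial_{\varepsilon}g)^{-1}(x^{\ast}).
\]
Applying formula (\ref{f1b}) of Theorem \ref{hamdkathirbisb} to the family $\{f_{x}:x\in\operatorname*{dom}g\}$ on $X^{\ast}$ (for which the weak* topology on the dual $X^{\ast\ast}$ is precisely $w^{\ast\ast}$) yields the first formula directly.

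For the second formula, under the additional assumption that $f$ is finite and norm-continuous somewhere, Corollary \ref{khay} applies to the same family and gives
\[
\partial f(x^{\ast})=\mathrm{N}_{\operatorname*{dom}f}(x^{\ast})+\overline{\operatorname*{co}}\left\{\bigcap\nolimits_{\varepsilon>0}\operatorname*{cl}\nolimits^{w^{\ast\ast}}\left((\partial_{\varepsilon}g)^{-1}(x^{\ast})\right)\right\}.
\]
It then suffices to establish the bidual analog of the identity (\ref{yad}), namely
\[
\bigcap\nolimits_{\varepsilon>0}\operatorname*{cl}\nolimits^{w^{\ast\ast}}\left((\partial_{\varepsilon}g)^{-1}(x^{\ast})\right)=(\partial\overline{g}^{w^{\ast\ast}})^{-1}(x^{\ast}).
\]
This can be obtained by invoking \cite[Lemma 2.3]{CHG18} for the extension $\tilde{g}:X^{\ast\ast}\to\overline{\mathbb{R}}$ of $g$, set to $+\infty$ off $X$, within the duality $(X^{\ast\ast},X^{\ast})$; the $w^{\ast\ast}$-lsc hull of $\tilde{g}$ is precisely $\overline{g}^{w^{\ast\ast}}$, while $(\partial_{\varepsilon}\tilde{g})^{-1}(x^{\ast})$ coincides with $(\partial_{\varepsilon}g)^{-1}(x^{\ast})$ since $\tilde{g}$ takes the value $+\infty$ outside $X$.

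The main obstacle is precisely the verification of this bidual version of (\ref{yad}), which requires checking that the approximation-by-$\varepsilon$-subgradients characterization of the subdifferential of the lsc hull carries over to the $(X^{\ast\ast},X^{\ast})$ duality; once this identity is in place, the remainder of the argument is a verbatim translation of the proof of Proposition \ref{conj} into the framework where the roles of $X$ and $X^{\ast}$ are played by $X^{\ast}$ and $X^{\ast\ast}$.
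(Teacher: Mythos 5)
Your proposal is correct and follows essentially the same route as the paper: the paper's proof simply applies Proposition \ref{conj} as a black box in the duality pair $((X^{**},w^{**}),(X^*,\Vert\cdot\Vert_*))$ with $g$ replaced by its extension $\hat{g}$ (equal to $+\infty$ off $X$), which is exactly the argument you carry out by hand — Theorem \ref{hamdkathirbisb} and Corollary \ref{khay} applied to the affine family $\{f_x\}$ on $X^*$, followed by the bidual version of (\ref{yad}) via \cite[Lemma 2.3]{CHG18} for $\hat{g}$. The "main obstacle" you flag is precisely what the paper absorbs into the change of duality pair, so there is no gap.
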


\begin{proof}
Following similar arguments as those used in \cite{CH12}, we apply Proposition \ref{conj} in the duality pair $((X^{**},w^{**}),(X^*,\|\|_*))$, replacing the function $g$ in (\ref{eqs}) by the function $\hat{g}$ defined on $X^{**}$ as 
$$
\hat{g}(y)=g(y),\ \text{ if }y\in X^{**};\ +\infty, \text{ otherwise }.
$$
Observe that the $w^{**}$-lsc hull of $\hat{g}$ is precisely the function $\overline{g}^{w^{**}}$. 
\end{proof}

\bigskip

Now, as in \cite{CHL19,CHL19c}, we consider the following
convex semi-infinite optimization problem
\[
\mathcal{(P)}:\text{ \ }\operatorname*{Inf} f_{0}(x),\ \text{ subject to } f_{t}(x)\leq0,\text{ }t\in T,
\]
where $T$ is a given set, and\textbf{ }$f_{0},$ $f_{t}:\mathbb{R}%
^{n}\rightarrow\mathbb{R}_{\infty},$ $t\in T$\textbf{,} are proper and convex.
We assume, without loss of generality, that\textbf{ }$0\notin T$\textbf{, }and denote
\[
f:=\sup\nolimits_{t\in T}f_{t}.
\]

The following result establishes\ new Fritz-John and KKT optimality conditions
for problem $\mathcal{(P)}$, improving similar results in \cite{CHL19,CHL19c}.
Here we adopt the convention $\mathbb{R}_{+}\emptyset
=\left\{  0_{n}\right\} .$

\begin{prop}
Let $\bar{x}\ $be an optimal\ solution\ of $\mathcal{(P)}$ such that
$f(\bar{x})=0.$ Then\ we have
\begin{equation}
0_{n}\in\operatorname*{co}\left\{  \partial(f_{0}+\mathrm{I}%
_{\operatorname*{dom}f})(\bar{x})\cup\bigcap\nolimits_{\varepsilon
>0}\operatorname*{cl}\left(  \bigcup\nolimits_{t\in T_{\varepsilon}(\bar{x}%
)}\partial_{\varepsilon}(f_{t}+\mathrm{I}_{\operatorname*{dom}f\cap
\operatorname*{dom}f_{0}})(\bar{x})\right)  \right\}  . \label{FJC}%
\end{equation}
Moreover, if the Slater condition holds; that is, $f(x_{0})<0$ for some
$x_{0}\in\operatorname*{dom}f_{0}$, then
\begin{equation}
0_{n}\in\partial(f_{0}+\mathrm{I}_{\operatorname*{dom}f})(\bar{x}%
)+\operatorname*{cone}\bigcap\nolimits_{\varepsilon>0}\operatorname*{cl}%
\left(  \bigcup\nolimits_{t\in T_{\varepsilon}(\bar{x})}\partial_{\varepsilon
}(f_{t}+\mathrm{I}_{\operatorname*{dom}f\cap\operatorname*{dom}f_{0}})(\bar
{x})\right)  \label{Slater}%
\end{equation}
and, provided in addition that $f$ is continuous at some point in
$\operatorname*{dom}f_{0}\cap\operatorname*{dom}f,$
\begin{equation}
0_{n}\in\partial f_{0}(\bar{x})+\operatorname*{cone}\bigcap
\nolimits_{\varepsilon>0}\operatorname*{cl}\left(  \bigcup\nolimits_{t\in
T_{\varepsilon}(\bar{x})}\partial_{\varepsilon}f_{t}(\bar{x})\right)
+\mathrm{N}_{\operatorname*{dom}f}(\bar{x}). \label{Slaterc}%
\end{equation}

\end{prop}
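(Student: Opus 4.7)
The plan is to reduce everything to Theorem~\ref{hamdkathirbisb} by introducing the auxiliary max function
$$
F(x) := \max\{f_{0}(x)-f_{0}(\bar{x}),\, f(x)\}.
$$
On the feasible set, $F(x)\geq f_0(x)-f_0(\bar{x})\geq 0$ by optimality; off the feasible set, $F(x)\geq f(x)>0$; and $F(\bar{x})=0$. Hence $\bar{x}$ is a global minimizer of $F$, which gives $0_n\in\partial F(\bar{x})$. Writing $F=\sup_{s\in T\cup\{0\}}g_{s}$, with $g_{0}:=f_{0}-f_{0}(\bar{x})$ and $g_{t}:=f_{t}$ for $t\in T$, I would apply formula~(\ref{f1}) to this enlarged family. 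In $\mathbb{R}^{n}$ the intersection over $L\in\mathcal{F}(\bar{x})$ collapses (take $L=\mathbb{R}^{n}$) and the strong closure coincides with the closure. Since $\operatorname*{dom}F=\operatorname*{dom}f_{0}\cap\operatorname*{dom}f$ and the new $\varepsilon$-active set at $\bar{x}$ is $\{0\}\cup T_{\varepsilon}(\bar{x})$, this yields
$$
0_{n}\in\operatorname*{co}\Big\{\bigcap\nolimits_{\varepsilon>0}\operatorname*{cl}(A_{\varepsilon}\cup B_{\varepsilon})\Big\},
$$
with $A_{\varepsilon}:=\partial_{\varepsilon}(f_{0}+\mathrm{I}_{\operatorname*{dom}f})(\bar{x})$ and $B_{\varepsilon}:=\bigcup_{t\in T_{\varepsilon}(\bar{x})}\partial_{\varepsilon}(f_{t}+\mathrm{I}_{\operatorname*{dom}f_{0}\cap\operatorname*{dom}f})(\bar{x})$.

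To pass from this to (\ref{FJC}), I would invoke Carath\'eodory in $\mathbb{R}^{n}$: $0_{n}=\sum_{i=1}^{n+1}\lambda_{i}u_{i}$ with $u_{i}\in\bigcap_{\varepsilon}\operatorname*{cl}(A_{\varepsilon}\cup B_{\varepsilon})$. For each $u_{i}$, select $v_{k}^{i}\to u_{i}$ with $v_{k}^{i}\in A_{1/k}\cup B_{1/k}$; a pigeonhole step allows extraction of a subsequence lying entirely in the $A$-sets or entirely in the $B$-sets. In the first case, passing to the limit in the $\varepsilon$-subgradient inequality places $u_{i}$ in $\partial(f_{0}+\mathrm{I}_{\operatorname*{dom}f})(\bar{x})$. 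In the second case, the monotonicities $T_{1/k}(\bar{x})\subset T_{\varepsilon}(\bar{x})$ and $\partial_{1/k}\subset\partial_{\varepsilon}$ for $1/k\leq\varepsilon$ give $u_{i}\in\operatorname*{cl}(B_{\varepsilon})$ for every $\varepsilon>0$, and so $u_{i}\in\bigcap_{\varepsilon}\operatorname*{cl}(B_{\varepsilon})$. Thus every extreme element falls into the prescribed union, yielding (\ref{FJC}).

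Next, I would derive (\ref{Slater}) by using the convexity of $A:=\partial(f_{0}+\mathrm{I}_{\operatorname*{dom}f})(\bar{x})$ to collect the $A$-terms into a single vector $u_{A}\in A$, so the combination rewrites as $0_{n}=\lambda_{A}u_{A}+\sum_{j}\mu_{j}v_{j}$ with $v_{j}\in\bigcap_{\varepsilon}\operatorname*{cl}(B_{\varepsilon})$ and $\lambda_{A}+\sum_{j}\mu_{j}=1$. Suppose for contradiction that $\lambda_{A}=0$. For a Slater point $x_{0}\in\operatorname*{dom}f_{0}$ with $f(x_{0})<0$, approximate $v_{j}$ by $v_{j}^{\varepsilon}\in\partial_{\varepsilon}(f_{t_{j}^{\varepsilon}}+\mathrm{I})(\bar{x})$ with $t_{j}^{\varepsilon}\in T_{\varepsilon}(\bar{x})$; the $\varepsilon$-subgradient inequality at $x_{0}$ gives $\langle v_{j}^{\varepsilon},x_{0}-\bar{x}\rangle\leq f_{t_{j}^{\varepsilon}}(x_{0})-f_{t_{j}^{\varepsilon}}(\bar{x})+\varepsilon\leq f(x_{0})+2\varepsilon$. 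Taking limits and then the $\mu_{j}$-convex combination produces $0=\sum_{j}\mu_{j}\langle v_{j},x_{0}-\bar{x}\rangle\leq f(x_{0})<0$, a contradiction. Hence $\lambda_{A}>0$, and dividing through yields exactly (\ref{Slater}).

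Finally, for (\ref{Slaterc}) I would post-process (\ref{Slater}) using the continuity hypothesis. If $f$ is continuous at some $x_{1}\in\operatorname*{dom}f_{0}\cap\operatorname*{dom}f$, then $x_{1}\in\operatorname*{int}(\operatorname*{dom}f)$, so $\mathrm{I}_{\operatorname*{dom}f}$ is continuous at $x_{1}\in\operatorname*{dom}f_{0}$; by Moreau--Rockafellar (\ref{MR}), $\partial(f_{0}+\mathrm{I}_{\operatorname*{dom}f})(\bar{x})=\partial f_{0}(\bar{x})+\mathrm{N}_{\operatorname*{dom}f}(\bar{x})$. Analogously, the same regularity lets one strip the indicator off each $\partial_{\varepsilon}(f_{t}+\mathrm{I})(\bar{x})$ inside the cone (modulo the closure and absorbing the extra normal cone into the additive $\mathrm{N}_{\operatorname*{dom}f}(\bar{x})$ term outside), which is essentially the content of Corollary~\ref{khay} applied to the $f_{t}$'s. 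I expect the main obstacle to be the Carath\'eodory/pigeonhole separation in the Fritz--John step, and secondarily controlling the $\varepsilon$-subdifferential version of Moreau--Rockafellar uniformly in $\varepsilon$ in the last paragraph.
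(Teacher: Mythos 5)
Your route to (\ref{FJC}) and (\ref{Slater}) is correct but genuinely different from the paper's. The paper first applies Proposition \ref{thmcompact0} to the two-element family $\{f_{0}-f_{0}(\bar{x}),f\}$, which immediately yields $0_{n}\in\operatorname{co}\{\partial(f_{0}+\mathrm{I}_{\operatorname{dom}f})(\bar{x})\cup\partial(f+\mathrm{I}_{\operatorname{dom}f_{0}})(\bar{x})\}$ with \emph{exact} subdifferentials of the two blocks, and only then expands the second block via Theorem \ref{hamdkathirbisb}; the Slater step is then the one-line remark that $0_{n}\in\partial(f+\mathrm{I}_{\operatorname{dom}f_{0}})(\bar{x})$ would make $\bar{x}$ a minimizer of $f+\mathrm{I}_{\operatorname{dom}f_{0}}$, contradicting $f(x_{0})<0$. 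You instead feed the enlarged family indexed by $T\cup\{0\}$ directly into (\ref{f1}), which forces you to recover the exact subdifferential of $f_{0}+\mathrm{I}_{\operatorname{dom}f}$ from its $\varepsilon$-subdifferentials by the Carath\'eodory/pigeonhole argument, and to re-prove the Slater exclusion by hand through the $\varepsilon$-subgradient inequality at $x_{0}$. Both extra steps are sound (your dichotomy does give $\bigcap_{\varepsilon}\operatorname{cl}(A_{\varepsilon}\cup B_{\varepsilon})\subset\partial(f_{0}+\mathrm{I}_{\operatorname{dom}f})(\bar{x})\cup\bigcap_{\varepsilon}\operatorname{cl}(B_{\varepsilon})$, after which taking convex hulls already yields (\ref{FJC}) without Carath\'eodory), so the price of your route is only length; what the two-stage decomposition buys is that the $f_{0}$-term never gets entangled with the $\varepsilon$'s in the first place.

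The gap is in (\ref{Slaterc}). ``Stripping the indicator off each $\partial_{\varepsilon}(f_{t}+\mathrm{I}_{\operatorname{dom}f\cap\operatorname{dom}f_{0}})(\bar{x})$'' is not a consequence of Moreau--Rockafellar: the $\varepsilon$-sum rule produces $\varepsilon$-normal sets $\partial_{\varepsilon_{2}}\mathrm{I}(\bar{x})$, not the normal cone, and these cannot be absorbed into a single additive $\mathrm{N}_{\operatorname{dom}f}(\bar{x})$ before the intersection over $\varepsilon$ is taken; Corollary \ref{khay} is a statement about $\partial f$, not about the individual sets $\partial_{\varepsilon}(f_{t}+\mathrm{I})(\bar{x})$. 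The clean repair --- which is in effect what the paper does via (\ref{mous}) and (\ref{ss}) --- is to note that your set $\bigcap_{\varepsilon>0}\operatorname{cl}(B_{\varepsilon})$ is contained in $\partial(f+\mathrm{I}_{\operatorname{dom}f_{0}})(\bar{x})$: every element of $B_{\varepsilon}$ is a $2\varepsilon$-subgradient of $f+\mathrm{I}_{\operatorname{dom}f_{0}}$ at $\bar{x}$ (since $f_{t}+\mathrm{I}_{\operatorname{dom}f\cap\operatorname{dom}f_{0}}\leq f+\mathrm{I}_{\operatorname{dom}f_{0}}$ and $f_{t}(\bar{x})\geq-\varepsilon$), the sets $\partial_{2\varepsilon}(f+\mathrm{I}_{\operatorname{dom}f_{0}})(\bar{x})$ are closed, and their intersection over $\varepsilon>0$ is the exact subdifferential. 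Then expand this exact subdifferential by (\ref{MR}) and Corollary \ref{khay} as $\mathrm{N}_{\operatorname{dom}f_{0}}(\bar{x})+\mathrm{N}_{\operatorname{dom}f}(\bar{x})+\operatorname{co}\{\bigcap_{\varepsilon}\operatorname{cl}(\bigcup_{t\in T_{\varepsilon}(\bar{x})}\partial_{\varepsilon}f_{t}(\bar{x}))\}$, and absorb $\mathrm{N}_{\operatorname{dom}f_{0}}(\bar{x})$ into $\partial f_{0}(\bar{x})$.
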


\begin{proof}
We consider the function $g:\mathbb{R}^{n}\rightarrow\mathbb{R\cup\{+\infty
\}}$, defined as
\[
g(x):=\sup\{f_{0}(x)-f_{0}(\bar{x}),\ f_{t}(x),t\in T\}=\max\left\{
f_{0}(x)-f_{0}(\bar{x}),\ f(x)\right\}  ,
\]
so that $\operatorname*{dom}g=\operatorname*{dom}f_{0}\cap\operatorname*{dom}%
f.$ Then $\bar{x}$ is a global minimum of $g;$ that is, $0_{n}\in\partial
g(\bar{x}).$ To proceed, we first apply Proposition \ref{thmcompact0} to the
(finite) family $\{f_{0}-f_{0}(\bar{x}),\ f\}$ and obtain
\begin{equation}
0_{n}\in\operatorname*{co}\left\{  \partial(f_{0}+\mathrm{I}%
_{\operatorname*{dom}f})(\bar{x})\cup\partial(f+\mathrm{I}%
_{\operatorname*{dom}f_{0}})(\bar{x})\right\}  . \label{mous}%
\end{equation}
But Theorem \ref{hamdkathirbisb}, applied to the family $\left\{
f_{t}+\mathrm{I}_{\operatorname*{dom}f_{0}},\text{ }t\in T\right\}  $, yields%
\begin{equation}
\partial(f+\mathrm{I}_{\operatorname*{dom}f_{0}})(\bar{x})=\operatorname*{co}%
\left\{  \bigcap\nolimits_{\varepsilon>0}\operatorname*{cl}\left(
\bigcup\nolimits_{t\in T_{\varepsilon}(\bar{x})}\partial_{\varepsilon}%
(f_{t}+\mathrm{I}_{\operatorname*{dom}f\cap\operatorname*{dom}f_{0}})(\bar
{x})\right)  \right\}  , \label{ss}%
\end{equation}
and (\ref{FJC}) follows from (\ref{mous}).

Finally,\ it can be easily seen from (\ref{mous}) that the Slater condition
precludes that $0_{n}\in\partial(f+\mathrm{I}_{\operatorname*{dom}f_{0}}%
)(\bar{x})$. So, (\ref{Slater}) follows from (\ref{FJC}). Under the
supplementary continuity condition, Corollary \ref{khay} ensures that%
\begin{align*}
\partial(f+\mathrm{I}_{\operatorname*{dom}f_{0}})(\bar{x})  &  =\mathrm{N}%
_{\operatorname*{dom}f_{0}}(\bar{x})+\partial f(\bar{x})\\
&  =\mathrm{N}_{\operatorname*{dom}f_{0}}(\bar{x})+\mathrm{N}%
_{\operatorname*{dom}f}(x)+\operatorname*{co}\left\{  \bigcap
\nolimits_{\varepsilon>0}\operatorname*{cl}\left(  \bigcup\nolimits_{t\in
T_{\varepsilon}(x)}\partial_{\varepsilon}f_{t}(x)\right)  \right\}  ,
\end{align*}
and (\ref{Slaterc}) follows, taking into account (\ref{MR}) and 
\begin{align*}
0_{n}  &  \in\partial(f_{0}+\mathrm{I}_{\operatorname*{dom}f})(\bar
{x})+\mathbb{R}_{+}\partial(f+\mathrm{I}_{\operatorname*{dom}f_{0}})(\bar
{x})\\
&  =\partial f_{0}(\bar{x})+\mathrm{N}_{\operatorname*{dom}f}(\bar
{x})+\mathbb{R}_{+}\partial(f+\mathrm{I}_{\operatorname*{dom}f_{0}})(\bar
{x})\\
&  =\partial f_{0}(\bar{x})+\operatorname*{cone}\left\{  \bigcap
\nolimits_{\varepsilon>0}\operatorname*{cl}\left(  \bigcup\nolimits_{t\in
T_{\varepsilon}(x)}\partial_{\varepsilon}f_{t}(x)\right)  \right\}
+\mathrm{N}_{\operatorname*{dom}f_{0}}(\bar{x})+\mathrm{N}%
_{\operatorname*{dom}f}(x)\\
&  \subset\partial f_{0}(\bar{x})+\operatorname*{cone}\left\{  \bigcap
\nolimits_{\varepsilon>0}\operatorname*{cl}\left(  \bigcup\nolimits_{t\in
T_{\varepsilon}(x)}\partial_{\varepsilon}f_{t}(x)\right)  \right\}
+\mathrm{N}_{\operatorname*{dom}f}(x).
\end{align*}

\end{proof}

\section{Conclusions}
The main conclusion of this work is that the compactification method proposed in the paper allows us to move from the non-continuous setting to the continuous one and the other way around, as well as to develop a unifying theory which inspires new results and applications.
The main results in relation to the subdifferential of the supremum are stated in Theorems \ref{hamdkathir}, \ref{thm6}, and \ref{hamdkathirbisb}, which are established in the most general framework, free of assumptions on the index set and the data functions. Our results cover most of the existing formulas such as those obtained in \cite{CHL16,CHL19,CHL19b,CHL19c,DGL06,HL08,HLZ08,HiMa93,Io12,IoLe72,IoTi79,LoTh13,LV10,Ps65,So01,Va69,Vo94}. The Fritz-John and KKT conditions for convex semi-infinite optimization are expressed in the most general scenario and, consequently, extend some previous results which can be found in \cite{DGL06,GL98,HiMa93,IoTi79}.

\end{document}